\renewcommand{\Re}{\operatorname{Re}}
\renewcommand{\Im}{\operatorname{Im}}
\newcommand{\supp}{\operatorname{supp}}
\newcommand{\eq}{:=}
\newcommand{\grad}{\boldsymbol \nabla}
\renewcommand{\div}{\grad \cdot}
\newcommand{\curl}{\grad \times}
\newcommand{\BC}{\boldsymbol C}
\newcommand{\BH}{\boldsymbol H}
\newcommand{\BL}{\boldsymbol L}
\newcommand{\bg}{\boldsymbol g}
\newcommand{\bh}{\boldsymbol h}
\newcommand{\bk}{\boldsymbol k}
\newcommand{\bn}{\boldsymbol n}
\newcommand{\bo}{\boldsymbol o}
\newcommand{\bv}{\boldsymbol v}
\newcommand{\bw}{\boldsymbol w}
\newcommand{\bx}{\boldsymbol x}
\newcommand{\by}{\boldsymbol y}
\newcommand{\CO}{\mathcal O}
\newcommand{\LB}{\mathscr B}
\newcommand{\LC}{\mathscr C}
\newcommand{\LF}{\mathscr F}
\newcommand{\LK}{\mathscr K}
\newcommand{\LM}{\mathscr M}
\newcommand{\LR}{\mathscr R}
\newcommand{\LV}{\mathscr V}
\newcommand{\tens}[1]{\underline{\boldsymbol #1}}
\newcommand{\vect}[1]{\boldsymbol #1}
\newcommand{\tgrad}{\tens{\nabla}}
\newcommand{\eps}{\tens{\varepsilon}}
\newcommand{\sig}{\tens{\sigma}}
\newcommand{\vel}{\vartheta}
\newcommand{\vels}{\vartheta_{\rm S,\min}}
\newcommand{\velsV}{\vartheta_{\rm S}}
\newcommand{\velpV}{\vartheta_{\rm P}}
\newcommand{\Crob}{\LC_{\rm rob}}
\newcommand{\Creg}{\LC_{\rm reg}}
\newcommand{\Cell}{\LC_{\rm ell}}
\newcommand{\ttau}{\tens{\tau}}
\newcommand{\txi}{\tens{\xi}}
\newcommand{\RRR}{\ell}
\newcommand{\MMM}{M}
\newcommand{\mmm}{m}
\newcommand{\bzero}{\bo}
\newcommand{\Ks}{k_{\rm S}}
\newcommand{\Kp}{k_{\rm P}}
\newcommand{\ks}{\kappa_{\rm S}}
\newcommand{\kp}{\kappa_{\rm P}}
\newcommand{\uuu}{\vect{u}}
\newcommand{\fff}{\vect{f}}
\newcommand{\AAA}{\tens{A}}
\newcommand{\BBB}{\tens{B}}
\newcommand{\al}{\tens{\alpha}}
\newcommand{\bxi}{\vect{\xi}}
\newcommand{\R}{\mathbb{R}}
\newcommand{\C}{\mathbb{C}}
\newcommand{\nn}{\boldsymbol n}
\newcommand{\GDiss}{\Gamma_{\rm Diss}}
\newcommand{\GDir}{\Gamma_{\rm Dir}}
\newtheorem{theorem}{Theorem}
\newtheorem{lemma}[theorem]{Lemma}
\newtheorem{proposition}[theorem]{Proposition}
\newtheorem{corollary}[theorem]{Corollary}
\newtheorem{assumption}[theorem]{Assumption}
\newtheorem{remark}[theorem]{Remark}
\renewcommand{\Re}{\operatorname{Re}}
\renewcommand{\Im}{\operatorname{Im}}
\newcommand{\tw}{\widetilde w}
\newcommand{\tbv}{\widetilde \bv}
\newcommand{\hbv}{\widehat   \bv}
\newcommand{\hbx}{\widehat   \bx}
\newcommand{\hCO}{\widehat \CO}
\newcommand{\hOmega}{\widehat \Omega}
\newcommand{\hGDiss}{\widehat \Gamma_{\rm Diss}}
\numberwithin{equation}{section}
\numberwithin{theorem}{section}
\title[Stability estimates for nearly incompressible elastodynamics]%
{%
Frequency-explicit stability estimates for time-harmonic elastodynamic
problems in nearly incompressible materials
}
\author{T. Chaumont-Frelet$^\star$ and S. Nicaise$^\dagger$}
\address{\vspace{-.5cm}}
\address{\noindent \tiny \textup{$^\star$Inria Univ. Lille and Laboratoire Paul Painlev\'e, 59655 Villeneuve-d'Ascq, France}}
\address{\noindent \tiny \textup{$^\dagger$Universit\'e Polytechnique Hauts-de-France, INSA Hauts-de-France, CERAMATHS-Laboratoire de Mat\'eriaux C\'eramiques et Math\'ematiques, F-59313 - Valenciennes Cedex 9 France}}
\begin{document}

\maketitle

\begin{abstract}
We consider time-harmonic elastodynamic problems in heterogeneous media.
We focus on scattering problems in the high-frequency regime and in
nearly incompressible media, where the the angular frequency $\omega$
and ratio of the Lam\'e parameters $\lambda/\mu$ may both be large. We
derive stability estimates controlling the norm of the solution by the
norm of the right-hand side up to a fully-explicit constant. Crucially,
under natural assumptions on the domain and coefficients, this constant
increases linearly with $\omega$ and is uniform in the ratio $\lambda/\mu$.

\vspace{.5cm}
\noindent
{\sc Keywords:}
Elastodynamics,
Helmholtz problems,
Nearly incompressible materials,
High-frequency scattering,
Stability estimates,
Resolvent estimates

\end{abstract}

\section{Introduction}

The propagation of time-harmonic waves plays a pivotal role in a vast array of applications
in physics and engineering, and is mathematically modeled via so-called Helmholtz equations.
Obtaining stability estimates controlling the norm of the solution to a Helmholtz equation
by the norm of the right-hand side through a constant explicit in the frequency is then crucial,
as such estimates are instrumental in control theory
\cite{bey_heminna_loheac_2003a,lagnese_1983a,lions_1988b,martinez_1999a}
and in the design and analysis of numerical discretizations
\cite{%
barucq_chaumontfrelet_gout_2017a,%
bernkopf_chaumontfrelet_melenk_2023a,%
chaumontfrelet_ern_vohralik_2021a,%
chaumontfrelet_nicaise_2020a,%
ciarlet_2002a,%
lafontaine_spence_wunsch_2022a,%
melenk_sauter_2011a,%
peterseim_2016a}.

The simplest instance of a time-harmonic wave propagation problem is the modeling of small
acoustic waves in an inifinite uniform fluid modeled by the scalar Helmholtz equation.
In this case, given $f: \R^3 \to \C$ supported in a ball $B_\RRR$ of radius $\RRR$,
the solution $u: \R^3 \to \C$ satisfies
\begin{equation}
\label{eq_helmholtz_acoustic_intro}
-\frac{\omega^2}{\vel^2} u-\Delta u = f,
\end{equation}
where $\omega > 0$ is the frequency and $\vel > 0$ is the (constant) wavespeed,
together with the Sommerfeld's radiation condition at infinity. In this setting,
the sharp stability estimate
\begin{equation}
\label{eq_sharp_acoustic_intro}
k^2 \|u\|_{B_\RRR}
\leq
k \RRR \|f\|_{B_\RRR}
\end{equation}
holds true \cite{galkowski_spence_wunsch_2020a}, where $k \eq \omega/\vel$ is the wavenumber.
Using the method of Morawetz's multipliers
\cite{%
barucq_chaumontfrelet_gout_2017a,%
brown_gallistl_peterseim_2017a,%
chandlerwilde_monk_2008a,%
hetmaniuk_2007a,%
melenk_1995a,%
moiola_spence_2019a,%
morawetz_1961a,%
perthame_vega_1999a,%
spence_2014a}, the estimate in \eqref{eq_sharp_acoustic_intro} can
be generalized, up to a multiplicative constant, to acoustic Helmholtz problems set
in the exterior of a star-shaped obstacle and/or if variable coefficients with suitable
monotonicity conditions are included in~\eqref{eq_helmholtz_acoustic_intro}.

In this work, we focus on elastodynamic problems. For an infinite homogeneous solid charactized
by its (constant) density and Lam\'e paramteres $\rho,\mu,\lambda > 0$, given $\fff: \R^3 \to \C^3$,
the solution $\uuu: \R^3 \to \C^3$ satisfies
\begin{equation}
\label{eq_helmholtz_elastic_intro}
-\rho \omega^2 \uuu - \mu \Delta \uuu - (\lambda+\mu) \grad \div \uuu = \rho \fff,
\end{equation}
together with a radiation condition at infinity. By looking at solutions of
the form $\uuu = \curl \boldsymbol \psi$ or $\uuu = \grad q$, one easily sees that
\eqref{eq_helmholtz_elastic_intro} supports two kinds of waves travelling at speeds
$\velsV \eq \sqrt{\mu/\rho}$ and $\velpV \eq \sqrt{(\lambda+2\mu)/\rho}$. These are
respectively called shear and pressure waves, and have associated wavenumbers
$\Ks \eq \omega/\velsV$ and $\Kp \eq \omega/\velpV$. In the particular 
case of homogeneous media considered above, the fundamental solution
(see \cite{brown_gallistl_2023a} and Appendix \ref{appendix_fundamental_solution} below)
may be employed to show that
\begin{equation}
\label{eq_fundamental_intro}
\Ks^2 \|\uuu\|_{B_\RRR}
\leq
(4 + 17\Ks\RRR) \mu^{-1} \|\fff\|_{B_\RRR}.
\end{equation}
We make two crucial observations regarding \eqref{eq_fundamental_intro}.
First (i), the scaling of the stability constant as $\Ks\RRR \to +\infty$,
i.e. as $\omega \to +\infty$, is the same as in the acoustic case. Besides (ii),
the estimate is uniform in the limit $\lambda/\mu \to +\infty$. This is in a sense
to be expected, since $\Kp \to 0$ as $\lambda$ becomes large.

The goal of the present work is to analyze stability properties of elastodynamic
problems in the high-frequency regime and in nearly incompressible media, meaning
that both $\omega$ and $\lambda/\mu$ may be large. The estimate in \eqref{eq_fundamental_intro}
is only available in infinite homogeneous media, since it explicitely uses the fundamental
solution in its proof. In contrast, here, we handle problems set in the exterior of an
obstacle and/or with variable coefficients $\rho$, $\lambda$ and $\mu$. Specifically,
given a bounded domain $\Omega \subset \R^3$ and $\fff: \Omega \to \C^3$,
we consider the problem of finding $\uuu: \Omega \to \C^3$ such that
\begin{equation}
\label{eq_helmholtz_strong}
\left \{
\begin{array}{rcll}
-\omega^2 \rho \uuu - \div \sig(\uuu) &=& \rho \fff & \text{ in } \Omega,
\\
\uuu &=& \bzero & \text{ on } \GDir,
\\
\sig(\uuu)\bn - i\omega\AAA\uuu &=& \bzero & \text{ on } \GDiss,
\end{array}
\right .
\end{equation}
where $\sig(\uuu) \eq \mu \{\tgrad \uuu + (\tgrad \uuu)^T\} + \lambda \div \uuu \tens{I}$
is the stress tensor, and the coefficients $\rho,\mu,\lambda: \Omega \to \R$ are allowed
to vary. The Dirichlet boundary condition on $\GDir$ represent an impenetrable obstacle,
and instead of the Sommerfeld radiation condition, we consider a first-order absorbing
condition on $\GDiss$ with impedance coefficients $\AAA: \GDiss \to \R^{3 \times 3}$.
Considering a local absorbing condition is crucial in applications linked with observability
and controllability \cite{bey_heminna_loheac_2003a,lagnese_1983a,lions_1988b,martinez_1999a},
and it is also important in numerical applications where the Sommerfeld radiation condition
has to be approximated by an absorbing boundary condition to truncate the computation domain
\cite{%
bayliss_gunzburger_turkel_1982a,%
engquist_majda_1979a,%
galkowski_lafontaine_spence_2021a,%
goldstein_1981a,%
higdon_1991a}.

Our key result is that, under natural assumptions concerning the star-shapeness of
the boundaries $\GDir$ and $\GDiss$ and radial monotonicity of the coefficients,
the bound
\begin{equation*}
\omega^2 \|\uuu\|_{\rho,\Omega}
\leq
C_{\rm st} \frac{\omega\RRR}{\vels} \|\fff\|_{\rho,\Omega}
\end{equation*}
where $\RRR$ is the radius of $\Omega$ and $\vels$ is the minimal shear wavespeed,
holds true with a fully-computable constant $C_{\rm st}$ that is independent of
both $\omega$ and $\lambda_{\max}/\mu_{\min}$. Crucially, the frequency-scaling
is optimal, and the estimate is robust in nearly incompressible materials where
$\lambda_{\max}/\mu_{\min} \to +\infty$. The fact that $C_{\rm st}$ is fully
computable is also fundamental in the context of a posteriori error estimation
for numerical approximations \cite{chaumontfrelet_ern_vohralik_2021a}.

Morawetz's multiplier techniques have been previously used in the litterature
to derive stability estimates for \eqref{eq_helmholtz_strong}, in particular
in \cite{bey_heminna_loheac_2003a,cummings_feng_2006a,lagnese_1983a}.
The present work improves on these references as follows. First (i),
the result given in \cite{cummings_feng_2006a} are suboptimal in
the frequency scaling (quadratic instead of linear). Besides (ii),
\cite{bey_heminna_loheac_2003a} and \cite{cummings_feng_2006a} are
limited to homogeneous media. Finally (iii), none of these works analyses
the nearly incompressible aspect.

Stability estimates are also established in \cite{brown_gallistl_2023a}
based on the fundamental solution. We slightly revisit this result in
Appendix \ref{appendix_fundamental_solution} to establish \eqref{eq_fundamental_intro}
in homogeneous media. However, this approach cannot be easily extended to
heterogeneous media.

We finally mention that techniques based on semi-classical analysis have been
used in the past for scalar Helmholtz problems \cite{galkowski_spence_wunsch_2020a}.
These technique are often sucessful in geometries where the Morawetz's multiplier fail.
However, to the best of our knowledge, the framework of semi-classical analysis has never
been employed in the context of elastodynamics so far.

The remainder of this work is organized as follows. Section \ref{section_settings}
recaps the setting and key notation. In Section \ref{section_standard}, we state
and revisit standard integration by parts identities in the context of Morawetz's
multipliers. Section \ref{section_new} is dedicated to a set of new identities
which allows to avoid the use of Korn constants and  obtain optimal frequency scaling.
The main stability estimates are established in Sections \ref{section_simple_robin}
and \ref{section_general_robin}, and these abstract estimates are applied to various
examples in Section \ref{section_examples}. We slightly revisit earlier works involving
fundamental solutions in Appendix \ref{appendix_fundamental_solution}, and state some
regularity results needed in Section \ref{section_general_robin} in
Appendix~\ref{appendix_elliptic_regularity}.

\section{Settings}
\label{section_settings}

\subsection{Domain}
\label{section_domain}

Throughout this work $\Omega \subset \R^d$, $d = 2$ or $3$, is a bounded domain with a Lipschitz
boundary. The boundary of $\Omega$ is partitioned into two non-overlapping relatively closed
subdomains $\GDir$ and $\GDiss$. For the sake of simplicity, we will assume that $\GDiss$ is
of class $C^{2,1}$, while  $\GDir$ is of class $C^{1,1}$, or the boundary of a polytope.
Finally, we denote by
\begin{equation}
\label{eq_radius_omega}
\RRR
\eq
\frac{1}{2} \sup_{\bx,\by \in \Omega} |\bx-\by|
\end{equation}
the radius of the domain $\Omega$.

The notation $\hOmega = (1/\RRR) \Omega$ and $\hGDiss \eq (1/\RRR) \GDiss$ will be also useful.
These denote rescaled version of $\Omega$ and $\GDiss$ and only depend on the ``shape'' of the
domain, not on its actual ``size''.

\subsection{Tensor notations}

If $\bv: \Omega \to \C^d$ is a smooth vector-valued function,
we define its gradient $\tgrad \bv: \Omega \to \C^{d \times d}$ by setting
\begin{equation*}
(\tgrad \bv)_{j\ell} \eq \partial_j \bv_\ell
\end{equation*}
for all $1 \leq j,\ell \leq d$, and we extend this definition in the sense
of distribution when appropriate. The symmetrized gradient (or strain tensor)
of $\bv$ is then defined as
\begin{equation*}
\eps(\bv) \eq \frac{1}{2} \left \{\tgrad \bv + (\tgrad \bv)^T\right \}.
\end{equation*}

If $\ttau,\txi \in \C^{d \times d}$ are two tensors, we employ the standard notations
\begin{equation}
\label{eq_frobenius}
\ttau:\txi \eq \sum_{j,\ell=1}^d \ttau_{j\ell}\txi_{j\ell}
\qquad
|\ttau|^2 \eq \ttau:\overline{\ttau}
\end{equation}
for the Frobenius inner-product and norm on $\C^{d \times d}$.

\subsection{Functional setting}

If $D \subset \R^d$ is an open set, we classically denote by $L^2(D)$
the Lebesgue space of square-integrable complex valued functions \cite{adams_fournier_2003a}.
We further respectively employ the notations $\BL^2(D)$ and $\tens{L}^2(D)$
for the space of vector-valued and tensor-valued functions that have all components
in $L^2(D)$. We respectively denote by $(\cdot,\cdot)_D$ and $\|{\cdot}\|_D$ the standard
inner-products and norms of $L^2(D)$, $\BL^2(D)$ and $\tens{L}^2(D)$. Classically, for the
inner-product of $\tens{L}^2(D)$, the Frobenius inner-product introduced
at \eqref{eq_frobenius} is employed point-wise. If $\mathfrak{m}: D \to [0,+\infty]$
is measurable function, we introduce the functionals
$\|{\cdot}\|_{\mathfrak{m},D} \eq \sqrt{(\mathfrak{m}\cdot,\cdot)}$. The
integral is always well-defined (although possibly infinite) due to the positivity of the
integrand. If $\mathfrak{m}$ is uniformly bounded away from $0$ and $+\infty$, then
$\|{\cdot}\|_{\mathfrak{m},D}$ is a norm equivalent to the usual one $\|{\cdot}\|_D$.

We also employ standard notations for Sobolev spaces \cite{adams_fournier_2003a},
namely
\begin{equation*}
H^1(D)
\eq
\left \{
v \in L^2(D) \; | \; \grad v \in \BL^2(\Omega)
\right \}
\end{equation*}
and
\begin{equation*}
H^2(D)
\eq
\left \{
v \in H^1(D) \; | \; \tgrad \grad v \in \tens{L}^2(\Omega)
\right \}.
\end{equation*}
Similarly, $\BH^1(D)$ and $\BH^2(D)$ respectively contains vector-valued functions
having in component in $H^1(D)$ and $H^2(D)$. If $\gamma \subset \partial D$
is a relatively open subset of the boundary of $D$, then
\begin{equation*}
H^1_\gamma(D)
\eq
\{
v \in H^1(D) \; | \; v|_{\gamma} = 0
\},
\end{equation*}
where the restriction operator is understood in the sense of traces, see e.g.
\cite[Section 5.34]{adams_fournier_2003a}. We also note $\BH^1_\gamma(D) \eq [H^1_\gamma(D)]^d$
for vector-valued functions.

If $\Gamma \subset \R^d$ is a Lipschitz $d-1$ manifold, we can define $L^2(\Gamma)$ and
$\BL^2(\Gamma)$ using the surface measure, with their usual norm and inner product denoted
by $\|{\cdot}\|_{\Gamma}$ and $(\cdot,\cdot)_\Gamma$. If
$\underline{\mathfrak{M}}: \Gamma \to \R^{d \times d}$
is an measurable SPD matrix-valued function uniformly bounded away from $0$ and $+\infty$,
then the application
$\|{\cdot}\|_{\underline{\mathfrak{M}},\Gamma} \eq \sqrt{(\underline{\mathfrak{M}}\cdot,\cdot)_\Gamma}$
is an equivalent norm on $\BL^2(\Gamma)$.

For $1 < s < 2$, we will also use the fractional Sobolev space
\begin{equation*}
H^s(\Omega)
\eq
\left \{
v \in H^1(\Omega)
\; \left |\;
\int_\Omega \int_\Omega
\frac{|\grad v(\bx)-\grad v(\by)|^2}{|\bx-\by|^{2s+d-2}}
d\by d\bx
<
+\infty
\right .
\right \}
\end{equation*}
and its vector-valued version $\BH^s(\Omega)$. Crucially, if
$3/2 < s < 2$, then the trace operator
$\bv \in \BH^s(\Omega) \to (\tgrad \bv)|_{\partial \Omega} \in \tens{L}^2(\partial \Omega)$
is continuous.

\subsection{Notations related to the boundary}

If $\bx \in \GDiss$, the following decomposition of $\bxi \in \C^d$ will be useful:
\begin{equation*}
\bxi_N(\bx) \eq (\bxi \cdot \bn(\bx)) \bn(\bx)
\qquad
\bxi_T(\bx) \eq \bxi-\bxi_N(\bx).
\end{equation*}
Similarly for $\varphi \in H^s(\Omega)$, with $s \in (3/2,2]$, we can define
\begin{eqnarray*}
\grad_T \varphi = \grad\varphi-(\grad \varphi \cdot \bn) \bn \in \BL^2(\partial \Omega).
\end{eqnarray*}

We further note that under the assumption that $\GDiss$ is $C^{2,1}$,
$\bn$ can be extended into a $C^{1,1}$ vector field in a neighbourhood
$V \subset \Omega$ of $\GDiss$, and therefore for $\bv \in \BH^s(\Omega)$,
for some $s \in (3/2, 2]$, $\bv \cdot \bn$ (resp. $\bv_T$) will belong to $H^{s}(\Omega\cap V)$ 
(resp. $\BH^{s}(\Omega\cap V)$. Consequently we can define
$\grad_T(\bv \cdot \bn)$ and $\grad_T \cdot \bv_T$ in $\BL^2(\GDiss)$.

\subsection{Material coefficients}

The elastic properties of $\Omega$ are described by three real-valued functions
$\rho,\mu,\lambda: \Omega \to \R$. The three functions are assumed to be measurable.
Physically, $\rho$ represent the density of the material contained in $\Omega$,
whereas $\mu$ and $\lambda$ are the Lam\'e parameters respectively quantifying the
response of the material to shear and pressure constraints.

The shear and pressure wave speeds are respecitively defined by
\begin{equation}
\label{eq_definition_velV}
\velsV=\sqrt{\frac{\mu}{\rho}}, 
\qquad
\velpV=\sqrt{\frac{\lambda+2\mu}{\rho}}.
\end{equation}

Throughout this document, if $\phi$ is a measurable function,
the notation $\phi_{\min}$ and $\phi_{\max}$ respectively denote the
essential minimum and maximum of $\phi$. Classically, we shall assume hereafter
that $\rho_{\min} > 0$ and $\rho_{\max} < +\infty$, and similarly, that
$\mu_{\min} > 0$ and $\mu_{\max} < +\infty$. For the
remaining parameter, we demand that $\lambda_{\min} \geq 0$
and $\lambda_{\max} < + \infty$.

\begin{remark}[Relaxed assumptions on $\lambda$]
We could, in principle, relax our assumptions on $\lambda$
by only requiring that $\lambda_{\max} \leq +\infty$ and
$(2\mu+d\lambda)_{\min} > 0$. The first relaxation is essentially
already taken care of in the manuscript, since most of the bounds
we will establish are uniform in $\lambda_{\max}$. For simplicity,
however, we do not consider the possibility of negative values for
$\lambda$ in this work.
\end{remark}

\subsection{Frequency}

In the remainder of this work, $\omega \geq 0$ denotes the angular frequency.
We can think of $\omega$ being fixed, but large. All our estimates will be explicit
in $\omega$. The notation
\begin{equation}
\label{eq_definition_ks}
\ks \eq \frac{\omega\RRR}{\vels}.
\end{equation}
where
\begin{equation}
\label{eq_definition_vels}
\vels \eq \sqrt{\frac{\mu_{\min}}{\rho_{\max}}},
\end{equation}
will be extremely useful. Notice $\ks$ is adimensional, it is proportional
to the ratio between the shear wavelength and the domain size, i.e. it measures
the size of the domain in number of shear wavelength.

\subsection{Dissipative boundary}

The dissipative boundary conditions are described with the help of
a symmetric matrix-valued function $\AAA \in \tens{C}^{0,1}(\overline{\GDiss})$.
The quantities
\begin{equation*}
a_{\min} \eq \operatorname{ess} \inf_{\bx \in \GDiss} \min_{\substack{\bxi \in \R^d \\ |\bxi| = 1}}
\AAA(\bx) \bxi \cdot \bxi,
\qquad
a_{\max} \eq \operatorname{ess} \sup_{\bx \in \GDiss} \max_{\substack{\bxi \in \R^d \\ |\bxi| = 1}}
\AAA(\bx) \bxi \cdot \bxi
\end{equation*}
respectively represent the minimal and maximal eigenvalues of $\AAA$, and we will
assume that $0 < a_{\min}$ and $a_{\max} < +\infty$. We will sometimes assume that
$\AAA$ satisfies
\begin{equation}
\label{eq_assumption_AAA}
\AAA(\bx)\bxi = a_T \bxi_T + a_N \bxi_N
\end{equation}
for two constant real numbers $a_T,a_N$, whereby $a_{\min} = \min(a_T,a_N)$
and $a_{\max} = \max(a_T,a_N)$.

The ``adimensional'' matrix $\al \eq (\rho_{\max}\mu_{\min})^{-1/2}\AAA$
will often be employed. The notations
$\alpha_{\min} \eq (\rho_{\max}\mu_{\min})^{1/2} a_{\min}$
and
$\alpha_{\max} \eq (\rho_{\max}\mu_{\min})^{1/2} a_{\max}$
are used for the smallest and largest eigenvalues of $\al$,
and we set $\alpha_T \eq (\rho_{\max}\mu_{\min})^{-1/2} a_T$ and
$\alpha_N \eq (\rho_{\max}\mu_{\min})^{-1/2} a_N$ whenever relevant.

\subsection{Stress tensor}

For $\bv \in \BH^1(\Omega)$, the stress tensor is defined as
\begin{equation*}
\sig(\bv) \eq 2\mu\eps(\bv) + \lambda(\div\bv)\tens{I}.
\end{equation*}
For $\bv,\bw \in \BH^1(\Omega)$, we will often use the facts that
\label{eq_stress_identities}
\begin{equation}
\sig(\bv):\grad \overline{\bw} = \sig(\bv):\eps(\overline{\bw}),
\qquad
\sig(\bv):\eps(\overline{\bv}) = 2\mu|\eps(\bv)|^2 + \lambda|\div \bv|^2
\end{equation}
almost everywhere in $\Omega$.

\subsection{Model problem}

Classically, we recat the model problem given in \eqref{eq_helmholtz_strong}
into the following weak form: Given $\fff \in \BL^2(\Omega)$,
find $\uuu \in \BH^1_{\GDir}(\Omega)$ such that
\begin{equation}
\label{eq_helmholtz_weak}
-
\omega^2 (\rho\uuu,\bv)_\Omega
-
i\omega(\AAA\uuu,\bv)_{\GDiss}
+
2(\mu \eps(\uuu),\eps(\bv))_\Omega
+
(\lambda \div \uuu,\div \bv)_\Omega
=
(\rho\fff,\bv)_\Omega
\end{equation}
for all $\bv \in \BH^1_{\GDir}(\Omega)$.


\section{Standard key identities}
\label{section_standard}

This section gathers key integration by parts techniques that will be useful
throughout the manuscript. Throughout this section, $\fff \in \BL^2(\Omega)$ is fixed,
and $\bh \in \BC^1(\overline{\Omega}, \R)$ is a smooth real-valued vector field.
We also assume in this section that $\rho,\mu,\lambda \in C^{0,1}(\overline{\Omega})$,
and introduce the notation
\begin{equation*}
\LV_{\bh}(\phi) \eq \frac{\bh \cdot \grad \phi}{\phi}
\end{equation*}
for non-negative functions $\phi \in C^{0,1}(\overline{\Omega})$,
with the convention that $\LV_{\bh}(\phi) = 0$ whenever $\phi = 0$.

\subsection{G\aa rding identites}

We start by recording two simple identities that are classically obtained by
selecting the test function $\bv = \uuu$ in \eqref{eq_helmholtz_weak} and by taking
the real and imaginary parts of the resulting identity.

\begin{lemma}[G\aa rding-type identities]
For all $\uuu \in \BH^1_{\GDir}(\Omega)$ solution to \eqref{eq_helmholtz_weak},
we have
\begin{subequations}
\begin{equation}
\label{eq_garding_real}
2\|\eps(\uuu)\|_{\mu,\Omega}^2
+
\|\div \uuu\|_{\lambda,\Omega}^2
=
\Re (\rho\fff,\uuu)_\Omega
+
\omega^2 \|\uuu\|_{\rho,\Omega}^2
\end{equation}
and
\begin{equation}
\label{eq_garding_imag}
\omega \|\uuu\|_{\AAA,\GDiss}^2
=
\Im (\rho\fff,\uuu)_\Omega.
\end{equation}
\end{subequations}
\end{lemma}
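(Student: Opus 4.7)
The proof is a direct substitution in the weak formulation: I would simply take $\bv = \uuu$ in \eqref{eq_helmholtz_weak} and split the resulting scalar equation into its real and imaginary parts.

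The key observation is that, because $\rho$, $\mu$, $\lambda$ are real-valued and $\AAA(\bx)$ is a symmetric real matrix at every $\bx$, each of the four sesquilinear contributions on the left-hand side of \eqref{eq_helmholtz_weak} evaluated at $\bv = \uuu$ is a nonnegative \emph{real} number: $(\rho \uuu, \uuu)_\Omega = \|\uuu\|_{\rho,\Omega}^2$, $(\AAA \uuu, \uuu)_{\GDiss} = \|\uuu\|_{\AAA, \GDiss}^2$, $(\mu \eps(\uuu), \eps(\uuu))_\Omega = \|\eps(\uuu)\|_{\mu,\Omega}^2$, and $(\lambda \div \uuu, \div \uuu)_\Omega = \|\div \uuu\|_{\lambda,\Omega}^2$. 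The pointwise identity $\sig(\uuu):\eps(\overline{\uuu}) = 2\mu|\eps(\uuu)|^2 + \lambda|\div \uuu|^2$ recorded at \eqref{eq_stress_identities} is what makes the stress term split this cleanly into the $\mu$- and $\lambda$-weighted squared norms.

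Consequently, after the substitution only the dissipative boundary contribution carries an imaginary factor, while every other term on the left is real. Taking the real part therefore kills the boundary term and, after moving $\omega^2 \|\uuu\|_{\rho,\Omega}^2$ to the right-hand side, yields \eqref{eq_garding_real}; taking the imaginary part isolates the boundary term alone on the left and yields \eqref{eq_garding_imag}.

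No serious obstacle is expected — the whole argument is essentially one line of algebra. The only step that deserves a moment of care is to apply the sesquilinear conjugation convention on $\Omega$ and $\GDiss$ consistently, so that the boundary term ends up with the correct sign of $\omega \|\uuu\|_{\AAA,\GDiss}^2$ as stated in \eqref{eq_garding_imag}.
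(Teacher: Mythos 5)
Your proof is correct and is exactly the argument the paper has in mind: the authors state that these identities are "classically obtained by selecting the test function $\bv=\uuu$ in \eqref{eq_helmholtz_weak} and taking real and imaginary parts," without giving further detail, and your substitution-plus-splitting into real and imaginary parts is that argument spelled out. The sign caveat you raise at the end is in fact genuine: with the convention $(f,g)_D=\int_D f\cdot\overline{g}$ that is consistent with the passage from \eqref{eq_helmholtz_strong} to \eqref{eq_helmholtz_weak}, the imaginary part actually yields $-\omega\|\uuu\|_{\AAA,\GDiss}^2=\Im(\rho\fff,\uuu)_\Omega$, so the sign in \eqref{eq_garding_imag} as printed corresponds to the opposite sesquilinear convention (or a harmless typo), though this is immaterial for the downstream estimates since only $|\Im(\rho\fff,\uuu)_\Omega|$ is ever used.
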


\subsection{Integration by parts identities}

We next establish a Rellich-type identity. For constant Lam\'e parameters $\mu$ and $\lambda$,
such an identity is established in \cite[Lemma 4]{cummings_feng_2006a}. It is extended here to the
variable coefficients case. We skip the proof here for the sake of shortness, as the
modifications essentially follow the lines of \cite[Lemma 4.2]{graham_pembery_spence_2019a}.

\begin{lemma}[Rellich identity]
For all $\bv \in \BH^2(\Omega)$, we have
\begin{equation}
\label{eq_rellich_identity}
-2\Re (\div \sig(\bv),(\bh\cdot\grad)\bv)_\Omega
=
-
\LR_{\bh,\Omega}(\bv)
-
\LR_{\bh,\GDiss}(\bv)
+
\LB_{\bh,\GDir}(\bv)
+
\LB_{\bh,\GDiss}(\bv)
\end{equation}
with
\begin{align*}
\LR_{\bh,\Omega}(\bv)
&\eq
\int_\Omega \left \{
(\div \bh + \LV_{\bh}(\mu))2\mu|\eps(\bv)|^2
+
(\div \bh + \LV_{\bh}(\lambda))\lambda|\div \bv|^2
\right \}
\\
&-2\Re\int_\Omega \left \{
2
\mu \eps(\bv) : (\tgrad \bh \tgrad \overline{\bv})
+
\lambda (\div \bv) \tgrad \bh : (\tgrad \overline{\bv})^T
\right \},
\\
\LB_{\bh,\GDir}(\bv)
&\eq
\int_{\GDir} (\bh \cdot \bn) \sig(\bv):\eps(\overline{\bv})
-2 \Re
\int_{\GDir}
(\sig(\bv)\bn) \cdot ((\bh \cdot \grad) \overline{\bv}),
\end{align*}
and
\begin{equation*}
\LB_{\bh,\GDiss}(\bv)
\eq
\int_{\GDiss} (\bh \cdot \bn) \sig(\bv):\eps(\overline{\bv}),
\quad
\LR_{\bh,\GDiss}(\bv)
\eq
2 \Re \int_{\GDiss}
(\sig(\bv)\bn) \cdot ((\bh \cdot \grad) \overline{\bv}).
\end{equation*}
\end{lemma}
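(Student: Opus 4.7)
The plan is to establish the identity by a straightforward divergence-theorem argument, following the template of \cite{cummings_feng_2006a} or \cite{graham_pembery_spence_2019a} but keeping track carefully of the Lipschitz variations of $\mu$ and $\lambda$. Since $\bv \in \BH^2(\Omega)$ and the coefficients are in $C^{0,1}(\overline{\Omega})$, every term written below belongs to $L^1$, so all integrations by parts are justified.

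First I would integrate by parts once, exploiting that $\bh$ is real so $\overline{(\bh\cdot\grad)\bv} = (\bh\cdot\grad)\overline{\bv}$:
\begin{equation*}
-2\Re(\div\sig(\bv),(\bh\cdot\grad)\bv)_\Omega
=
2\Re\int_\Omega \sig(\bv):\tgrad((\bh\cdot\grad)\overline{\bv})
-
2\Re\int_{\partial\Omega} (\sig(\bv)\bn)\cdot((\bh\cdot\grad)\overline{\bv}).
\end{equation*}
The boundary integral, split over $\GDir$ and $\GDiss$, already produces the second piece of $\LB_{\bh,\GDir}(\bv)$ and the term $-\LR_{\bh,\GDiss}(\bv)$; what remains is to reshape the volume integral.

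Next I would apply the algebraic identity $\tgrad((\bh\cdot\grad)\bw) = \tgrad\bh\,\tgrad\bw + (\bh\cdot\grad)\tgrad\bw$ componentwise, which splits the volume integrand into two pieces. The first piece, $\sig(\bv):(\tgrad\bh\,\tgrad\overline{\bv})$, expands using the definition of $\sig$ into exactly the two summands $2\mu\,\eps(\bv):(\tgrad\bh\,\tgrad\overline{\bv}) + \lambda(\div\bv)\,\tgrad\bh:(\tgrad\overline{\bv})^T$ that appear with a minus sign in $\LR_{\bh,\Omega}(\bv)$; here I would use that $\tens I:(\tgrad\bh\,\tgrad\overline{\bv}) = \operatorname{tr}(\tgrad\bh\,\tgrad\overline{\bv}) = \tgrad\bh:(\tgrad\overline{\bv})^T$. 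For the second piece I would use the symmetry of $\sig(\bv)$ to replace $(\bh\cdot\grad)\tgrad\overline{\bv}$ by $(\bh\cdot\grad)\eps(\overline{\bv})$, together with the identity \eqref{eq_stress_identities}, to rewrite
\begin{equation*}
2\Re\,\sig(\bv):(\bh\cdot\grad)\eps(\overline{\bv})
=
2\mu\,(\bh\cdot\grad)|\eps(\bv)|^2
+
\lambda\,(\bh\cdot\grad)|\div\bv|^2,
\end{equation*}
since for any tensor field $\tens A$ one has $2\Re\,\tens A:(\bh\cdot\grad)\overline{\tens A} = (\bh\cdot\grad)|\tens A|^2$.

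Finally I would apply the divergence theorem to each of the two scalar identities $\int_\Omega 2\mu\,(\bh\cdot\grad)|\eps(\bv)|^2$ and $\int_\Omega \lambda\,(\bh\cdot\grad)|\div\bv|^2$, using $2\mu\,(\bh\cdot\grad)f = \bh\cdot\grad(2\mu f) - 2\mu \LV_{\bh}(\mu) f$ (and analogously for $\lambda$). This produces the positive volume contributions $(\div\bh+\LV_{\bh}(\mu))\,2\mu|\eps(\bv)|^2$ and $(\div\bh+\LV_{\bh}(\lambda))\,\lambda|\div\bv|^2$ occurring in $\LR_{\bh,\Omega}(\bv)$, together with a boundary term equal to $\int_{\partial\Omega}(\bh\cdot\bn)\,\sig(\bv):\eps(\overline{\bv})$. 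Splitting this last boundary integral across $\GDir$ and $\GDiss$ furnishes the missing piece of $\LB_{\bh,\GDir}(\bv)$ and the $\LB_{\bh,\GDiss}(\bv)$ contribution; collecting all terms with their signs yields the stated identity.

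The main bookkeeping obstacle is tracking signs and real parts consistently when converting between $\sig(\bv):\tgrad\overline{\bv}$ and $\sig(\bv):\eps(\overline{\bv})$, and keeping straight which coefficient derivative feeds into which $\LV_{\bh}(\cdot)$ term; there is no analytic difficulty beyond the $\BH^2$ regularity already assumed.
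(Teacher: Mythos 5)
Your proof is correct and fills in exactly the computation the paper delegates to \cite{cummings_feng_2006a} and \cite{graham_pembery_spence_2019a}: one integration by parts to peel off the boundary terms, the pointwise decomposition $\tgrad((\bh\cdot\grad)\overline\bv) = \tgrad\bh\,\tgrad\overline\bv + (\bh\cdot\grad)\tgrad\overline\bv$, then a second divergence-theorem pass on the total derivative of the elastic energy density $2\mu|\eps(\bv)|^2+\lambda|\div\bv|^2$ producing both the $(\div\bh+\LV_\bh(\cdot))$ volume terms and the $(\bh\cdot\bn)\sig(\bv):\eps(\overline\bv)$ boundary contribution. The sign bookkeeping, the use of symmetry of $\sig(\bv)$ to pass from $\tgrad$ to $\eps$, the identity $\tens I:(\tgrad\bh\,\tgrad\overline\bv)=\tgrad\bh:(\tgrad\overline\bv)^T$, and the splitting of boundary integrals across $\GDir$ and $\GDiss$ all check out, so this is a faithful reconstruction of the intended argument.
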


\begin{corollary}[Rellich identity with lower regularity]
The Rellich identity in \eqref{eq_rellich_identity} remains valid for $\bv \in \BH^s(\Omega)$,
for some $s\in (3/2, 2]$, where the first term of the right-hand side of
\eqref{eq_rellich_identity} has to be understood as a duality bracket.
\end{corollary}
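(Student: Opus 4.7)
The plan is to proceed by a density argument, leveraging the fact that $\BH^2(\Omega)$ is dense in $\BH^s(\Omega)$ for the Lipschitz domain $\Omega$ and $s \in (3/2, 2]$. Given $\bv \in \BH^s(\Omega)$, I would pick a sequence $\bv_n \in \BH^2(\Omega)$ (obtained, for instance, by mollification after a Stein extension) with $\bv_n \to \bv$ in $\BH^s(\Omega)$, apply the $\BH^2$-version of the Rellich identity to each $\bv_n$, and pass to the limit in every term.

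The four contributions on the right-hand side are continuous functionals of $\bv$ in the $\BH^s(\Omega)$ topology. Indeed, $\LR_{\bh,\Omega}(\bv)$ only involves first-order derivatives of $\bv$ paired against uniformly bounded quantities ($\div\bh$, $\tgrad\bh$, $\LV_{\bh}(\mu)$ and $\LV_{\bh}(\lambda)$ are bounded since $\bh \in \BC^1(\overline{\Omega})$ and $\rho,\mu,\lambda \in C^{0,1}(\overline{\Omega})$), so it is already continuous on $\BH^1(\Omega)$. The three boundary contributions involve quadratic products of $\sig(\bv)\bn$, $(\bh\cdot\grad)\overline{\bv}$ or $\sig(\bv):\eps(\overline{\bv})$ restricted to $\partial\Omega$; for $s>3/2$, the trace operator $\bv \in \BH^s(\Omega) \mapsto (\tgrad\bv)|_{\partial\Omega} \in \tens{L}^2(\partial\Omega)$ is continuous (as recalled at the end of the functional setting section), so these boundary integrals are well-defined and depend continuously on $\bv \in \BH^s(\Omega)$.

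The delicate term is the one containing $\div\sig(\bv)$. Since $\sig(\bv) \in \tens{H}^{s-1}(\Omega)$, we have $\div\sig(\bv) \in \BH^{s-2}(\Omega)$, while $(\bh\cdot\grad)\bv \in \BH^{s-1}(\Omega) \hookrightarrow \BH^{2-s}(\Omega)$ because $s-1\geq 2-s$ for $s\geq 3/2$. The natural duality pairing
\begin{equation*}
\langle \div\sig(\bv), (\bh\cdot\grad)\bv \rangle_{\BH^{s-2}(\Omega),\, \BH^{2-s}(\Omega)}
\end{equation*}
is then meaningful, and can equivalently be realized through the integration-by-parts identity
\begin{equation*}
\langle \div\sig(\bv), \bw \rangle = \langle \sig(\bv)\bn, \bw \rangle_{\partial\Omega} - (\sig(\bv), \eps(\bw))_\Omega,
\end{equation*}
where the boundary bracket makes sense because $\sig(\bv)\bn$ admits a trace in $\BH^{s-3/2}(\partial\Omega)$. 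On the approximating sequence $\bv_n \in \BH^2(\Omega)$, this pairing reduces to the usual $L^2$ inner product, so passing to the limit identifies the bracket with the common value of the right-hand side of the Rellich identity at $\bv$.

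The main obstacle will be the careful bookkeeping of the various duality pairings and trace spaces: one must verify that the Lipschitz geometry of $\Omega$, combined with the $C^{2,1}$ regularity of $\GDiss$ and the $C^{1,1}$ (or polytopal) regularity of $\GDir$, is enough to give meaning to every boundary term, and to ensure compatibility of all trace theorems being invoked simultaneously (for $\tgrad\bv$, for $\sig(\bv)\bn$, and for the tangential decompositions on $\GDiss$ needed in $\LB_{\bh,\GDiss}$ and $\LR_{\bh,\GDiss}$).
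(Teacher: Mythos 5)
Your proof is correct and takes essentially the same route as the paper: both rely on density of $\BH^2(\Omega)$ in $\BH^s(\Omega)$, the observation that all terms except $(\div \sig(\bv),(\bh\cdot\grad)\bv)_\Omega$ are continuous in the $\BH^s$ topology (using the trace $\tgrad\bv\mapsto(\tgrad\bv)|_{\partial\Omega}$ for $s>3/2$), and the reinterpretation of the remaining term as the duality pairing between $\BH^{s-2}(\Omega)$ and $\BH^{2-s}_0(\Omega)=\BH^{2-s}(\Omega)$ via the embedding $\BH^{s-1}(\Omega)\hookrightarrow\BH^{2-s}(\Omega)$. Your statement that $\BH^2$ is dense in $\BH^s$ is the correct direction, which in fact fixes a minor typo in the paper's own proof.
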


\begin{proof}
We first notice that the absolute value of each term of the identity \eqref{eq_rellich_identity}
is bounded by $\|\bv\|_{\BH^s(\Omega)}^2$ up to a multiplicative factor independent of $\bv$.
The only subtle term is $(\div \sig(\bv),(\bh\cdot\grad)\bv)_\Omega$, since 
in that case $\div \sig(\bv)\in \BH^{s-2}(\Omega)$ and need not be square integrable.
However, since $\bh \in \BC^1(\overline{\Omega})$, we have
$(\bh\cdot\grad)\bv\in \BH^{s-1}(\Omega)$, and
since $H^{s-1}(\Omega)$ is continuously embedded into $H^{2-s}(\Omega)=H^{2-s}_0(\Omega)$,
the term $(\div \sig(\bv),(\bh\cdot\grad)\bv)_\Omega$ can be replaced by
\[
\langle \div \sig(\bv); (\bh\cdot\grad)\bv\rangle_{\BH^{s-2}(\Omega)-\BH^{2-s}_0(\Omega)},
\]
recalling that $\BH^{2-s}_0(\Omega)'=\BH^{s-2}(\Omega)$. We then obtain
\eqref{eq_rellich_identity} by a density arguments since $H^s(\Omega)$ is dense
in $H^2(\Omega)$.
\end{proof}

The Rellich identity in \eqref{eq_rellich_identity} will be useful to deal
with the second-order term in \eqref{eq_helmholtz_weak}. For the zero-order
term, we need another identity that extends \cite[Lemma 5]{cummings_feng_2006a}
to the case of a variable density $\rho$. The proof proceed
component-wise as in acoustic case, so that it is similar
to, e.g., \cite[Proposition 1]{barucq_chaumontfrelet_gout_2017a},
\cite[Theorem 1]{brown_gallistl_peterseim_2017a}, \cite[Theorem B.2]{chaumontfrelet_nicaise_2018a}
or \cite[Lemma 4.2]{graham_pembery_spence_2019a}. Hence, we de not reproduce the
proof here for the sake of shortness.

\begin{lemma}[Zero-order term identity]
For all $\bv \in \BH^1(\Omega)$ and $\bh \in \BC^1(\overline{\Omega})$, we have
\begin{eqnarray}
\label{eq_identity_mass}
-2\Re \int_\Omega
\rho
\bv \cdot (\bh\cdot \grad) \overline{\bv}
=
\int_\Omega (\div \bh + \LV_{\bh}(\rho)) \rho|\bv|^2
-
\int_{\partial \Omega} (\bh \cdot \bn) \rho |\bv|^2.
\end{eqnarray}
\end{lemma}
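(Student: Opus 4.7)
The plan is to reduce the identity to the scalar case and then use standard integration by parts. First I would note that, writing $\bv = (v_1,\ldots,v_d)$ and using that $\bh$ is real-valued, one has the pointwise identity
\begin{equation*}
2\Re\bigl(v_j (\bh\cdot\grad)\overline{v_j}\bigr) = (\bh\cdot\grad) |v_j|^2,
\end{equation*}
which follows from $\partial_k(v_j \overline{v_j}) = 2\Re(v_j \partial_k \overline{v_j})$. Summing over $j$ yields $2\Re(\bv\cdot(\bh\cdot\grad)\overline{\bv}) = (\bh\cdot\grad)|\bv|^2$, so the left-hand side of the identity reduces to $-\int_\Omega \rho \, \bh\cdot\grad|\bv|^2$, which now involves only a scalar function and no longer depends on the vector nature of $\bv$.

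Next I would integrate by parts. Since $\rho \in C^{0,1}(\overline{\Omega})$ and $\bh\in\BC^1(\overline{\Omega})$, the product rule gives
\begin{equation*}
\rho \,\bh\cdot\grad|\bv|^2
= \div\!\bigl(\rho|\bv|^2\bh\bigr) - |\bv|^2 \div(\rho\bh)
= \div\!\bigl(\rho|\bv|^2\bh\bigr) - |\bv|^2 \bigl(\rho\div\bh + \bh\cdot\grad\rho\bigr).
\end{equation*}
Applying the divergence theorem and using $\bh\cdot\grad\rho = \LV_{\bh}(\rho)\,\rho$ then yields
\begin{equation*}
\int_\Omega \rho\,\bh\cdot\grad|\bv|^2
= \int_{\partial\Omega} (\bh\cdot\bn)\,\rho|\bv|^2
- \int_\Omega \bigl(\div\bh + \LV_{\bh}(\rho)\bigr)\rho|\bv|^2,
\end{equation*}
which after multiplication by $-1$ is exactly \eqref{eq_identity_mass}.

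To justify the above computations, I would first carry them out for $\bv \in \BC^\infty(\overline{\Omega})$, where everything is classical, and then pass to the limit by density of $\BC^\infty(\overline{\Omega})$ in $\BH^1(\Omega)$. The only step that requires a little care in the limit is the boundary term: since $\bv\mapsto\bv|_{\partial\Omega}$ is continuous from $\BH^1(\Omega)$ to $\BL^2(\partial\Omega)$, and $\rho(\bh\cdot\bn)$ is bounded on $\partial\Omega$, the map $\bv\mapsto\int_{\partial\Omega}(\bh\cdot\bn)\rho|\bv|^2$ is continuous on $\BH^1(\Omega)$. Similarly the volume terms are continuous bilinear forms in $\bv$ on $\BH^1(\Omega)$ because $\rho,\,\div\bh,\,\bh\cdot\grad\rho$ are bounded. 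The main obstacle, though mild, is precisely ensuring that the Lipschitz regularity of $\rho$ is enough for $\bh\cdot\grad\rho \in L^\infty(\Omega)$ so that $\LV_{\bh}(\rho)\rho$ makes sense as a bounded function and the density argument goes through; this is the only real deviation from the constant-coefficient proof in \cite{cummings_feng_2006a}.
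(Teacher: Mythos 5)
Your proof is correct and follows exactly the route the paper indicates but does not write out: reduce componentwise to the scalar identity $2\Re(v_j(\bh\cdot\grad)\overline{v_j}) = (\bh\cdot\grad)|v_j|^2$, integrate by parts using the divergence theorem, then extend from $\BC^\infty(\overline\Omega)$ to $\BH^1(\Omega)$ by density. Your remark that the Lipschitz regularity of $\rho$ is precisely what makes $\bh\cdot\grad\rho\in L^\infty(\Omega)$ (so that all bilinear forms are $\BH^1$-continuous) is the right observation and is consistent with the standing assumption $\rho\in C^{0,1}(\overline\Omega)$ made at the start of that section.
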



\subsection{Dirichlet boundary}

We then need to take care of the boundary term in \eqref{eq_rellich_identity}.
To manage the Dirichlet boundary condition on $\GDir$, we can simply use
\cite[Identity (19)]{bey_heminna_loheac_2003a}, which we record below. The proofs in
\cite{bey_heminna_loheac_2003a} is carried out for the case of constant Lam\'e parameters $\mu$
and $\lambda$, but a careful inspection of the proof shows that it holds verbatim
for the case of variable coefficients (no integration by parts is performed).

\begin{lemma}[Dirichlet boundary]
For all $\bv \in \BH^1_{\GDir} \cap \BH^s(\Omega)$ with $s \in (3/2,2]$, we have
\begin{equation*}
\LB_{\bh,\GDir}(\bv)
=
-\int_{\GDir} (\bh \cdot \nn) \left \{
(\mu |\partial_n \bv_T|^2 + (2\mu+\lambda)  |\partial_n \bv_n|^2
\right \}.
\end{equation*}
\end{lemma}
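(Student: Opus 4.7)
The plan is to derive the identity by a purely pointwise computation on $\GDir$, without any integration by parts, so that the proof is insensitive to whether $\mu$ and $\lambda$ are constant or variable. The starting observation is that $\bv = \bzero$ on $\GDir$ forces all tangential derivatives of $\bv$ to vanish there in the trace sense, so that the full tensor $\tgrad \bv$ is determined by the normal derivative $\bd \eq \partial_n \bv$. Concretely, with the paper's convention $(\tgrad \bv)_{j\ell} = \partial_j v_\ell$, one obtains on $\GDir$
\begin{equation*}
\tgrad \bv = \bn \otimes \bd,
\qquad
\eps(\bv) = \tfrac{1}{2}(\bn \otimes \bd + \bd \otimes \bn),
\qquad
\div \bv = \bd \cdot \bn.
\end{equation*}
The regularity $\bv \in \BH^s(\Omega)$ with $s \in (3/2,2]$ is precisely what allows the trace of $\tgrad \bv$ to live in $\tens{L}^2(\partial \Omega)$, so these identities are meaningful as $L^2$ identities on $\GDir$.

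Next I would compute the two pieces entering $\LB_{\bh,\GDir}(\bv)$ using the decomposition $\bd = \bd_T + d_n \bn$ with $d_n = \bd \cdot \bn$. A direct Frobenius-inner-product calculation gives
\begin{equation*}
|\eps(\bv)|^2 = \tfrac{1}{2}(|\bd|^2 + |d_n|^2) = \tfrac{1}{2}|\bd_T|^2 + |d_n|^2,
\end{equation*}
whence
\begin{equation*}
\sig(\bv):\eps(\overline{\bv}) = 2\mu |\eps(\bv)|^2 + \lambda|\div \bv|^2 = \mu|\bd_T|^2 + (2\mu+\lambda)|d_n|^2.
\end{equation*}
For the normal traction, $\sig(\bv)\bn = \mu\bd + (\mu+\lambda) d_n \bn$, and since the tangential derivatives of $\bv$ vanish on $\GDir$, $(\bh \cdot \grad)\overline{\bv} = (\bh \cdot \bn)\overline{\bd}$, so
\begin{equation*}
(\sig(\bv)\bn)\cdot((\bh\cdot\grad)\overline{\bv}) = (\bh \cdot \bn)\left[\mu|\bd|^2 + (\mu+\lambda)|d_n|^2\right] = (\bh\cdot\bn)\left[\mu|\bd_T|^2 + (2\mu+\lambda)|d_n|^2\right],
\end{equation*}
which is real.

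Plugging these two expressions into the definition
\begin{equation*}
\LB_{\bh,\GDir}(\bv) = \int_{\GDir}(\bh\cdot\bn)\,\sig(\bv):\eps(\overline{\bv}) - 2\Re\int_{\GDir}(\sig(\bv)\bn)\cdot((\bh\cdot\grad)\overline{\bv}),
\end{equation*}
the two boundary integrals combine into $(1-2)\int_{\GDir}(\bh\cdot\bn)[\mu|\bd_T|^2 + (2\mu+\lambda)|d_n|^2]$, which is exactly the claimed identity once one identifies $\bd_T = \partial_n \bv_T$ and $d_n = \partial_n v_n$. The only delicate point in the argument is the first bullet: justifying rigorously that the tangential part of $\tgrad \bv$ vanishes on $\GDir$ when $\bv \in \BH^s(\Omega) \cap \BH^1_{\GDir}(\Omega)$; this follows from the fractional trace theorem recalled in the paper, combined with a density argument from $\bv \in \BH^1_{\GDir}(\Omega) \cap \BH^2(\Omega)$ (where the statement is classical) to $\bv \in \BH^1_{\GDir}(\Omega) \cap \BH^s(\Omega)$, exactly as in the extension performed in the preceding corollary.
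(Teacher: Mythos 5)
Your proof is correct, and it reconstructs in full detail the same direct pointwise computation (no integration by parts, $\tgrad\bv = \bn\otimes\partial_n\bv$ on $\GDir$) that the paper delegates to \cite[Identity (19)]{bey_heminna_loheac_2003a}, noting there that the argument carries over verbatim to variable $\mu,\lambda$ and to the lower regularity $\BH^s$, $s\in(3/2,2]$, by density. Your intermediate identities $|\eps(\bv)|^2=\tfrac12|\bd_T|^2+|d_n|^2$, $\sig(\bv)\bn=\mu\bd+(\mu+\lambda)d_n\bn$ and $(\bh\cdot\grad)\overline\bv=(\bh\cdot\bn)\overline\bd$ all check out, and the final cancellation of the two boundary integrals into the stated $-\int_{\GDir}(\bh\cdot\bn)\{\mu|\partial_n\bv_T|^2+(2\mu+\lambda)|\partial_n\bv_n|^2\}$ is exactly right.
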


Since $\LB_{\bh,\GDir}(\bv) \geq 0$ whenever $\bh \cdot \bn \leq 0$ and $\bv = \bo$ on $\GDir$,
this term will typically provides some additional control on the solution in favorable geometries.

\subsection{Key identity}

We can now provide the key identity central to the derivation of stability estimates.

\begin{theorem}[Morawetz identity]
\label{theorem_morawetz_identity}
Assume that  $\uuu \in \BH^1_{\GDir}(\Omega)$ is a solution to \eqref{eq_helmholtz_weak}.
Then $\uuu \in \BH^s(\Omega)$ for some $s \in (3/2,2]$ and the following identity holds true:
\begin{multline}
\label{eq_morawetz_identity}
\omega^2 \int_\Omega (\div \bh + \LV_{\bh}(\rho))\rho|\uuu|^2
+
\LB_{\bh,\GDiss}(\uuu)
+
\LB_{\bh,\GDir}(\uuu)
=
\\
2\Re (\rho\fff,(\bh\cdot\grad)\uuu)_\Omega
+
\omega^2 \int_{\GDiss} (\bh\cdot\bn)\rho|\uuu|
+
\LR_{\bh,\GDiss}(\uuu) + \LR_{\bh,\Omega}(\uuu).
\end{multline}
\end{theorem}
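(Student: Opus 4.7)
The plan is to test the strong form of \eqref{eq_helmholtz_strong} against the Morawetz multiplier $(\bh\cdot\grad)\uuu$ and assemble the result from the ingredients collected earlier in the section: the Rellich identity for the stress term, the zero-order identity \eqref{eq_identity_mass} for the mass term, and the Dirichlet boundary lemma to cancel the $\GDir$ contribution.

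First I would establish the regularity claim $\uuu \in \BH^s(\Omega)$ for some $s \in (3/2,2]$ by invoking the elliptic regularity results listed in Appendix~\ref{appendix_elliptic_regularity}. This is available under the standing hypotheses on $\GDir$ ($C^{1,1}$ or polytopal), $\GDiss$ ($C^{2,1}$), and the Lipschitz coefficients. Choosing test functions in $\BC_c^\infty(\Omega)$ in the weak formulation \eqref{eq_helmholtz_weak} shows
\[
-\omega^2 \rho \uuu - \div \sig(\uuu) = \rho \fff
\]
in the sense of distributions, hence as an identity in $\BH^{s-2}(\Omega)$. Because $s>3/2$, we have $2-s<1/2$, so that $\BH^{2-s}_0(\Omega)=\BH^{2-s}(\Omega)$ (no trace issue on a Lipschitz domain in that range), and since $\bh \in \BC^1(\overline{\Omega})$, $(\bh\cdot\grad)\uuu \in \BH^{s-1}(\Omega) \hookrightarrow \BH^{2-s}_0(\Omega)$. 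The duality pairing $\langle \div\sig(\uuu),(\bh\cdot\grad)\uuu\rangle_{\BH^{s-2}-\BH^{2-s}_0}$ is therefore well-defined, and testing the PDE against $(\bh\cdot\grad)\uuu$ and taking $2\Re$ yields
\[
-2\omega^2\Re(\rho\uuu,(\bh\cdot\grad)\uuu)_\Omega
-2\Re\langle\div\sig(\uuu),(\bh\cdot\grad)\uuu\rangle
=2\Re(\rho\fff,(\bh\cdot\grad)\uuu)_\Omega.
\]

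Next I would substitute the Rellich identity \eqref{eq_rellich_identity} (in its lower-regularity form given by the corollary) into the stress term and the zero-order identity \eqref{eq_identity_mass} multiplied by $\omega^2$ into the mass term. Summing these contributions produces
\begin{multline*}
\omega^2\!\int_\Omega (\div\bh+\LV_{\bh}(\rho))\rho|\uuu|^2
-\omega^2\!\int_{\partial\Omega}(\bh\cdot\bn)\rho|\uuu|^2
-\LR_{\bh,\Omega}(\uuu)-\LR_{\bh,\GDiss}(\uuu)
\\
+\LB_{\bh,\GDir}(\uuu)+\LB_{\bh,\GDiss}(\uuu)
=2\Re(\rho\fff,(\bh\cdot\grad)\uuu)_\Omega.
\end{multline*}
The Dirichlet condition $\uuu=\bzero$ on $\GDir$ (which survives the $\BH^s$ trace) kills the $\GDir$ portion of $\int_{\partial\Omega}(\bh\cdot\bn)\rho|\uuu|^2$, leaving only the $\GDiss$ piece. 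Rearranging then gives \eqref{eq_morawetz_identity}.

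The only genuine difficulty is the regularity bookkeeping: verifying that the duality pairing is legitimate at the $\BH^s$ level (via the embedding $\BH^{s-1}\hookrightarrow\BH^{2-s}_0$ for $s>3/2$) and that the Rellich corollary may be applied to $\uuu$ itself, not merely to $\BH^2$ functions. Once this is in place, the remainder is a straightforward linear combination of the three identities, with the Dirichlet trace of $\uuu$ handling the would-be $\GDir$ boundary mass term.
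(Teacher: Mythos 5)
Your algebraic assembly is exactly the paper's: once $\uuu\in\BH^s(\Omega)$ is known, test $-\omega^2\rho\uuu-\div\sig(\uuu)=\rho\fff$ against $(\bh\cdot\grad)\uuu$, substitute the Rellich identity \eqref{eq_rellich_identity} (in its $\BH^s$-duality form from the corollary) for the stress contribution and $\omega^2$ times \eqref{eq_identity_mass} for the mass contribution, then use the Dirichlet trace of $\uuu$ to discard the $\GDir$ piece of the boundary mass term. Your duality bookkeeping ($\BH^{s-1}\hookrightarrow\BH^{2-s}_0$ because $2-s<1/2$) is also correct and is in fact what Corollary~3.3 of the paper records.

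The weak point is the regularity step. You attribute $\uuu\in\BH^s(\Omega)$ with $s\in(3/2,2]$ to ``the elliptic regularity results listed in Appendix~\ref{appendix_elliptic_regularity}'', but that appendix proves something different: under an $\BH^2$-type local regularity assumption, it bounds the boundary trace $\|\grad\bw\|_{\mu,\GDiss}$, which is Assumption~\ref{assumption_elliptic_regularity} used in Section~\ref{section_general_robin}. It does not give the global $\BH^s(\Omega)$ regularity that Theorem~\ref{theorem_morawetz_identity} requires. For $\GDir$ of class $C^{1,1}$ your sketch would go through with standard $\BH^2$ regularity, but when $\GDir$ is the boundary of a polytope the solution need not be $\BH^2$ near $\GDir$, and the real work in the paper's proof is exactly there: rewriting \eqref{eq_helmholtz_weak} as an elastostatic problem with $\BL^2$ load $\rho\fff+\rho\omega^2\uuu$ and $\BH^{1/2}$ traction $i\omega\AAA\uuu$, and then invoking the corner/edge singularity analysis (Dauge's Theorem~17.13, \cite{dauge_1989a}, Kozlov--Mazya) to show that no singularity exponent lies in the critical strip, so that $\uuu\in\BH^s$ with $s>3/2$ holds up to $\GDir$. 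Without that ingredient your proof does not cover the polytopal case allowed in Section~\ref{section_domain}, and since the duality argument needs $s>3/2$, the gap is not cosmetic.
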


\begin{proof}
Assume that $\uuu \in \BH^1_{\GDir}(\Omega)$ solves \eqref{eq_helmholtz_weak}.
Then, we can rewrite \eqref{eq_helmholtz_weak} as follows:
\begin{equation}
\label{eq_helmholtz_weak_rhsdiff}
2(\mu \eps(\uuu),\eps(\bv))_\Omega
+
(\lambda \div \uuu,\div \bv)_\Omega
=
(\rho\fff,\bv)_\Omega
+
\omega^2 (\rho\uuu,\bv)_\Omega
+
i\omega(\AAA\uuu,\bv)_{\GDiss}
\end{equation}
for all $\bv \in \BH^1_{\GDir}(\Omega)$. In other words, $\uuu$ is solution
to the elastostatic problem with load term $\rho \fff+\rho\omega^2\uuu \in \BL^2(\Omega)$
and traction term $i\omega\AAA\uuu\in \BH^{1/2}(\GDiss)$, since
$\AAA \in \tens{C}^{0,1}(\overline{\GDiss})$.
As a result, under the assumption on $\GDir$ and $\GDiss$ stated in Section \ref{section_domain}
and the assumption that $\mu,\lambda \in C^{1,1}(\overline{\Omega})$
and $\rho\in L^\infty(\Omega)$, standard regularity results for the elasticity system ensure
that $\uuu \in \BH^2(\Omega)$ if $\GDir$ is $C^{1,1}$.

Alternatively if $\GDir$ is the boundary of a polytope, again by standard regularity
results for the elasticity system $\uuu \in \BH^2(\Omega\setminus \mathcal{O})$,
for any open neighbourhood $\CO$ of $\GDir$, while 
$\uuu \in \BH^s(\Omega\cap \CO)$
with $s \in (3/2,2]$, due to
Theorem 17.13 of \cite{dauge_1988a} 
since there is no corner singularity exponent $\alpha$ in the strip $\Re\alpha\in [0,1/2]$
due to \cite[\S 5]{dauge_1989a} if $d=2$, while there is no edge singularity exponent $\alpha$
in the strip $\Re\alpha\in [0,1/2]$ due to \cite[\S 5]{dauge_1989a} and no corner singularity
exponent $\alpha$ in the strip $\Re\alpha\in [-1,0]$ due to \cite[Theorem 3]{kozlov_mazya_1988a}
if $d=3$ (see also~\cite{nicaise_1992}). Altogether this yields $\uuu \in \BH^s(\Omega)$
with $s \in (3/2,2]$ (since $\CO$ is an arbitrary open neighbourhood  of $\GDir$).

Once the regularity $\uuu \in \BH^s(\Omega)$ for some  $s \in (3/2,2]$ is obtained,
the identity stated in \eqref{eq_helmholtz_weak_rhsdiff} follows from the addition
of the Rellich identity in \eqref{eq_rellich_identity} with the multiplication of
\eqref{eq_identity_mass} by $\omega^2$ (both applied to $\uuu$).
\end{proof}

\subsection{Key assumptions on the multiplier}
\label{section_key_assumptions}

For arbitrary fields $\bh$, the identity stated in \eqref{eq_morawetz_identity}
is not useful, as the key terms of the identity may not have definite signs.
Here, we provide a first set of assumptions on $\bh$ that enables useful applications
of \eqref{eq_morawetz_identity}.

The prototypical scenario where \eqref{eq_morawetz_identity} provides an interesting
estimate is when $\GDir = \partial D$ for a closed set $D \subset B_\RRR$ star-shaped
with respect to $\bzero$, $\GDiss = \partial B_\RRR$, $\Omega = B_\RRR \setminus D$,
and the coefficients $\rho$, $\mu$ and $\lambda$ are constant. In this case, selecting
$\bh(\bx) = \bx$, it is clear that $\|\bh\|_{\BL^\infty(\GDiss)} = \bh \cdot \bn = \ell$
on $\GDiss$, and we easily see that
\begin{equation*}
d\|\bv\|_{\rho,\Omega}^2
=
\int_\Omega(\div \bh + \LV_{\bh}(\rho)) \rho|\bv|^2
\end{equation*}
and
\begin{equation*}
\LR_{\bh,\Omega}(\bv) = (d-2) \left \{
2\|\eps(\bv)|_{\mu,\Omega}^2 + \|\div \bv\|_{\lambda,\Omega}^2
\right \}.
\end{equation*}
Besides, we have $\bh \cdot \bn \leq 0$ on $\GDir$ since $D$ is star-shaped with
respect to $\bzero$.

\begin{subequations}
\label{eq_assumptions_multiplier}
These observations motivate our key assumptions on $\bh$, which
slightly relax the identities derived above. We assume that there
exists constants $\eta,\varepsilon \geq 0$ with
\begin{equation}
\label{eq_assumption_gamma}
2\gamma \eq 2 - \eta - \varepsilon > 0,
\end{equation}
such that the estimates
\begin{equation}
\label{eq_assumption_L2_Omega}
(d-\eta)\|\bv\|_{\rho,\Omega}^2
\leq
\int_\Omega(\div \bh + \LV_{\bh}(\rho)) \rho|\bv|^2
\end{equation}
and
\begin{equation}
\label{eq_assumption_LR_Omega}
|\LR_{\bh,\Omega}(\bv)|
\leq
(d-2+\varepsilon) \left \{
2\|\eps(\bv)\|_{\mu,\Omega}^2 + \|\div \bv\|_{\lambda,\Omega}^2
\right \}.
\end{equation}
hold true. We also require that
\end{subequations}
\begin{equation}
\label{eq_assumption_sign_boundary}
\bh \cdot \bn \leq 0 \text{ on } \GDir,
\qquad
\bh \cdot \bn \geq 0 \text{ on } \GDiss.
\end{equation}

For the sake of simplicity, we introduce the notation
\begin{subequations}
\label{eq_bounds_h}
\begin{equation}
\label{eq_upper_bounds_h}
\MMM \eq \max(1,\RRR^{-1} \|\bh\|_{\BL^\infty(\Omega)})
\qquad
\nu
\eq
\frac{1}{\MMM} \|\tgrad \bh\|_{\tens{L}^\infty(\Omega)}
\end{equation}
and
\begin{equation}
\label{eq_lower_bounds_h}
\mmm
\eq
\frac{1}{\ell} \min_{\bx \in \GDiss} \bh(\bx) \cdot \bn (\bx).
\end{equation}
\end{subequations}
Notice that $M = 1$ if $\bx = \bh$, and
that $m = 1$ if we further assume that $\GDiss$ is a sphere of radius $\RRR$.

\section{New estimates}
\label{section_new}

This section now introduces a set of original estimates to efficiently deal
with the Robin condition on $\GDiss$, and avoid the use of the Korn constant
of the domain.

\subsection{Second fundamental form}

We introduce the second fundamental form $\tens{B} \in \tens{L}^\infty(\GDiss)$ of $\GDiss$,
namely
\begin{equation*}
\tens{B} \eq \tgrad_T \bn + (\grad_T \cdot \bn) \bn \otimes \bn,
\end{equation*}
as well as the constants
\begin{equation*}
\beta_T = \RRR \max_{\GDiss}
\sup_{\substack{\bxi \in \R^d \\ |\bxi| = 1}} |(\tgrad_T \bn) \bxi \cdot \bxi|,
\qquad
\beta_N = \RRR \max_{\GDiss} (\grad_T \cdot \bn),
\end{equation*}
where the factor $\RRR$ is chosen to make $\beta_T$ and $\beta_N$
adimensional. For later references, we observe that
\begin{equation}
\label{eq_bound_B}
|(\tens{B}\bv,\bv)_{\GDiss}|
\leq
\frac{1}{\ell} \left \{
\beta_T \|\bv_T\|_{\GDiss}^2 + \beta_N \|\bv \cdot \bn\|_{\GDiss}^2
\right \}.
\end{equation}
We also introduce the quantity
\begin{equation}
\label{eq_definition_chi}
\chi
\eq
\max \left \{
\frac{\beta_T}{\alpha_T},
\frac{\beta_N}{\alpha_N}
\right \}
\end{equation}
which measures the ratio between and $\AAA$ and $\BBB$ when $\AAA$ is of the form
\eqref{eq_assumption_AAA}, since it naturally appears in the estimates below.

\subsection{Korn inequality}

In this section, we establish a Korn inequality for functions
in $\BH^s(\Omega)$ for some  $s \in (3/2,2]$. Notice that because
the Dirichlet boundary conditions are only imposed on a subset of
$\partial \Omega$, naively using the Korn  inequality generally
results in an unknown constant. Here, we will use the additional information
of the solution on $\GDiss$ to obtain a fully-explicit inequality.

\begin{proposition}[Korn inequality]
For all $\bv \in \BH^1_{\GDir}(\Omega) \cap \BH^s(\Omega)$ with $s \in (3/2,2]$, we have
\begin{equation}
\label{eq_basic_korn_inequality}
\|\grad \bv\|_\Omega^2
\leq
2\|\eps(\bv)\|_\Omega^2
+
\frac{1}{\RRR}\left (
\beta_T\|\bv_T\|_{\GDiss}^2 + \beta_N\|\bv\cdot\bn\|_{\GDiss}^2
\right )
+
2\|\bv \cdot \bn\|_{\GDiss}\|\grad_T \cdot \bv_T\|_{\GDiss}.
\end{equation}
\end{proposition}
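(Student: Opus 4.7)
The identity $|\tgrad \bv|^2 = 2|\eps(\bv)|^2 - \tgrad \bv : (\tgrad \bv)^T$, obtained by splitting $\tgrad \bv$ into its symmetric and antisymmetric parts, reduces the proof to controlling $\int_\Omega \tgrad \bv : (\tgrad \bv)^T$ by $\int_\Omega (\div \bv)^2$ plus a suitable boundary integral. For smooth vector fields, a direct computation gives
\begin{equation*}
\sum_{j,\ell} \partial_j v_\ell\,\partial_\ell v_j \;=\; \div\!\big((\bv\cdot\grad)\bv\big) - \div\!\big((\div\bv)\bv\big) + (\div\bv)^2,
\end{equation*}
and the divergence theorem then yields
\begin{equation*}
\int_\Omega \tgrad \bv : (\tgrad \bv)^T
= \int_\Omega (\div\bv)^2 + \int_{\partial\Omega}\!\big[\bn\cdot(\bv\cdot\grad)\bv - (\div\bv)(\bv\cdot\bn)\big].
\end{equation*}
This identity extends to $\bv \in \BH^1_{\GDir}(\Omega) \cap \BH^s(\Omega)$ by density, since the hypothesis $s>3/2$ guarantees that $\tgrad \bv|_{\partial\Omega}$ is a well-defined $\tens{L}^2$ trace.

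Next, the boundary integrand vanishes identically on $\GDir$ because $\bv = \bzero$ there. On $\GDiss$, I would extend $\bn$ into a neighbourhood as the gradient of the signed distance function, so that $|\bn|=1$ and $\partial_n \bn = \bzero$ hold in that neighbourhood, and decompose $\bv = v_n \bn + \bv_T$ with $v_n \eq \bv \cdot \bn$. Standard surface calculus then gives, on $\GDiss$,
\begin{equation*}
\div \bv = \partial_n v_n + v_n\,(\grad_T\!\cdot\bn) + \grad_T\!\cdot\bv_T,
\end{equation*}
\begin{equation*}
\bn\cdot(\bv\cdot\grad)\bv = v_n \partial_n v_n + \bv_T\cdot \grad_T v_n - \bv_T\cdot(\tgrad_T \bn)\bv_T,
\end{equation*}
so the terms $v_n \partial_n v_n$ cancel when forming $(\div\bv)v_n - \bn\cdot(\bv\cdot\grad)\bv$, leaving only tangential information on $\GDiss$.

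Finally, a surface integration by parts---legitimate because $\bv_T$ vanishes on $\partial \GDiss \subset \overline{\GDir}$---converts $-\int_{\GDiss}\bv_T\cdot\grad_T v_n$ into $\int_{\GDiss} v_n\,(\grad_T\!\cdot\bv_T)$, producing
\begin{equation*}
\int_{\GDiss}\!\big[(\div\bv)v_n - \bn\cdot(\bv\cdot\grad)\bv\big]
= \int_{\GDiss} (\grad_T\!\cdot\bn)\,v_n^2 \;+\; \int_{\GDiss}\bv_T\cdot(\tgrad_T\bn)\bv_T \;+\; 2\int_{\GDiss} v_n\,(\grad_T\!\cdot\bv_T).
\end{equation*}
Bounding the first two terms via the definitions of $\beta_N$ and $\beta_T$, applying Cauchy--Schwarz to the third, and discarding the non-positive contribution $-\int_\Omega (\div\bv)^2$ then yields \eqref{eq_basic_korn_inequality}. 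The main technical obstacle is performing the surface calculus under the reduced regularity $\bv \in \BH^s(\Omega)$ rather than $\BH^2(\Omega)$; I would handle this by first establishing the identity for $\bv \in \BH^2(\Omega) \cap \BH^1_{\GDir}(\Omega)$ and then passing to the limit, using the continuity of the trace $\BH^s(\Omega) \to \tens{L}^2(\partial\Omega)$ and the fact, noted in the excerpt, that under the smoothness hypotheses on $\GDiss$ the tangential gradient and tangential divergence operators are bounded maps into $\BL^2(\GDiss)$.
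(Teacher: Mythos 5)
Your proposal is correct and, in substance, follows the same route as the paper: split $\tgrad\bv$ into symmetric and antisymmetric parts to reduce matters to controlling $\int_\Omega \tgrad\bv:(\tgrad\bv)^T$, express that quantity via $\int_\Omega(\div\bv)^2$ plus a boundary term, simplify the boundary term on $\GDiss$ into curvature contributions and a $\bv_T\cdot\grad_T v_n$ term, integrate the latter by parts tangentially to produce $v_n\,\grad_T\cdot\bv_T$, and finally discard the non-positive $-\int_\Omega(\div\bv)^2$ and estimate the curvature pieces via $\beta_T,\beta_N$. The paper does exactly this, except that the combined vector-calculus and boundary-decomposition step is cited as a black box — Grisvard's Theorem 3.1.1.1, which delivers directly
\begin{equation*}
\|\div\bv\|_\Omega^2 - (\tgrad\bv,(\tgrad\bv)^T)_\Omega
=
-2(\bv_T,\grad_T(\bv\cdot\bn))_{\GDiss} - (\BBB\bv,\bv)_{\GDiss},
\end{equation*}
with $\BBB = \tgrad_T\bn + (\grad_T\cdot\bn)\bn\otimes\bn$ and with the $\GDir$ contribution vanishing because $\bv=\bzero$ there. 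You derive this identity from scratch via the pointwise formula $\tgrad\bv:(\tgrad\bv)^T = \div((\bv\cdot\grad)\bv) - \div((\div\bv)\bv) + (\div\bv)^2$, the divergence theorem, and a normal/tangential decomposition using the extension of $\bn$ with $|\bn|=1$, $\partial_n\bn=\bzero$. What the paper's approach buys is brevity and the option to lean on Grisvard's careful treatment of the lower regularity $C^{1,1}$ boundary and of the trace issues; what yours buys is self-containedness and a transparent origin for the curvature terms. Note a purely cosmetic point: the overall sign of the curvature contributions in your derivation looks opposite to the sign of $(\BBB\bv,\bv)_{\GDiss}$ as transcribed in the paper from Grisvard; since the very next step bounds these terms in absolute value via the definitions of $\beta_T,\beta_N$ (exactly \eqref{eq_bound_B}), this has no bearing on the inequality. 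Two small remarks on rigor, applicable to both proofs at about the same level: the pointwise identity and your formula for $|\tgrad\bv|^2$ should carry a real part and conjugates since $\bv$ is complex-valued, and the density argument you sketch for passing from $\BH^2$ to $\BH^s\cap\BH^1_{\GDir}$ should be stated carefully (one needs smooth functions vanishing on $\GDir$ to be dense in $\BH^s(\Omega)\cap\BH^1_{\GDir}(\Omega)$, not merely that $\BH^2$ is dense in $\BH^s$); the paper handles this implicitly as well.
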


\begin{proof}
We start with \cite[Theorem 3.1.1.1]{grisvard_1985a}
stating that
\begin{equation*}
\|\div \bv\|_\Omega
-
(\grad \bv,(\grad \bv)^T)
=
-2 (\bv_T, \grad_T(\bv\cdot\bn))_{\GDiss} - (\BBB\bv,\bv)_{\GDiss}.
\end{equation*}
Note that  \cite[Theorem 3.1.1.1]{grisvard_1985a} is valid if $\GDiss$ is only $C^{1,1}$,
see \cite[p. 133]{grisvard_1985a}, while the contribution to $\GDir$ is zero due to the
Dirichlet boundary conditions on  $\GDir$. We then integrate by parts on $\GDiss$ the first
term in the right-hand side, leading to
\begin{equation*}
-
(\grad \bv,(\grad \bv)^T)
=
-\|\div \bv\|_\Omega^2 + 2 (\grad_T \cdot \bv_T,\bv\cdot\bn)_{\GDiss} - (\BBB\bv,\bv)_{\GDiss},
\end{equation*}
hence
\begin{equation*}
-\Re (\grad \bv,(\grad \bv)^T)
\leq
2 \|\grad_T \cdot \bv_T\|_{\GDiss}\|\bv\cdot\bn\|_{\GDiss}
+
\frac{1}{\RRR} \left (
\beta_T \|\bv_T\|_{\GDiss}^2
+
\beta_N\|\bv\cdot\bn\|_{\GDiss}^2
\right ),
\end{equation*}
where we used \eqref{eq_bound_B}. Then, \eqref{eq_basic_korn_inequality}
follows after observing that
\begin{equation*}
\|\grad \bv\|_\Omega^2
=
2\|\eps(\bv)\|_\Omega^2
-
(\grad \bv,(\grad \bv)^T)_\Omega.
\end{equation*}
\end{proof}

We also derive a coefficient-weighted version of Korn inequality \eqref{eq_basic_korn_inequality}.

\begin{corollary}[Weighted Korn inequality]
\label{corollary_korn_inequality}
Assume that $\AAA$ satisfies \eqref{eq_assumption_AAA}. Then the inequality
\begin{multline}
\label{eq_weighted_korn_inequality}
\mu_{\min} \|\grad \bv\|_\Omega^2
\leq
2\|\eps(\bv)\|_{\mu,\Omega}^2
+
\chi \ks^{-1} \omega\|\bv\|_{\AAA,\Omega}^2
\\
+
2\alpha_N^{-1/2}\ks^{-1/2}\omega^{1/2}\|\bv\|_{\AAA,\GDiss}\RRR^{1/2}\|\eps(\bv)\|_{\mu,\GDiss}.
\end{multline}
holds true for all $\bv \in \BH^1_{\GDir}(\Omega) \cap \BH^s(\Omega)$ with $s \in (3/2,2]$.
\end{corollary}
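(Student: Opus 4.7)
The plan is to bootstrap directly from the unweighted inequality \eqref{eq_basic_korn_inequality}. Multiplying that inequality by $\mu_{\min}$, the first term on its right-hand side gives $2\mu_{\min}\|\eps(\bv)\|_\Omega^2 \leq 2\|\eps(\bv)\|_{\mu,\Omega}^2$, which is precisely the first term in \eqref{eq_weighted_korn_inequality}. So the work is entirely on the two boundary terms.

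For the first boundary term $\frac{\mu_{\min}}{\RRR}(\beta_T\|\bv_T\|_{\GDiss}^2 + \beta_N\|\bv\cdot\bn\|_{\GDiss}^2)$, I would use assumption \eqref{eq_assumption_AAA} to write $\|\bv\|_{\AAA,\GDiss}^2 = a_T\|\bv_T\|_{\GDiss}^2 + a_N\|\bv\cdot\bn\|_{\GDiss}^2$ and absorb each piece by factoring out $\beta_T/a_T$ and $\beta_N/a_N$. Since $\alpha_\bullet = (\rho_{\max}\mu_{\min})^{-1/2} a_\bullet$, the definition \eqref{eq_definition_chi} of $\chi$ gives $\beta_\bullet/a_\bullet \leq \chi (\rho_{\max}\mu_{\min})^{-1/2}$, so the full term is bounded by $\frac{\chi\mu_{\min}^{1/2}}{\RRR\rho_{\max}^{1/2}}\|\bv\|_{\AAA,\GDiss}^2 = \frac{\chi\vels}{\RRR}\|\bv\|_{\AAA,\GDiss}^2$. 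Finally, since $\vels/\RRR = \omega/\ks$ by \eqref{eq_definition_ks}, this reads as $\chi \ks^{-1}\omega\|\bv\|_{\AAA,\GDiss}^2$, matching the target (modulo the apparent typo $\Omega$ vs.\ $\GDiss$ in the statement).

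For the cross term $2\mu_{\min}\|\bv\cdot\bn\|_{\GDiss}\|\grad_T\cdot\bv_T\|_{\GDiss}$, I would use $\|\bv\cdot\bn\|_{\GDiss} \leq a_N^{-1/2}\|\bv\|_{\AAA,\GDiss}$ from \eqref{eq_assumption_AAA}, and control the tangential divergence by the strain tensor pointwise, $|\grad_T\cdot\bv_T| \leq |\eps(\bv)|$ (since in a local orthonormal frame on $\GDiss$, $\grad_T\cdot\bv_T$ is a trace over $d-1$ diagonal entries of $\eps(\bv)|_{\GDiss}$; any second-fundamental-form correction is of the form $\BBB\bv$ and, if retained, can be dumped into the $\chi$ term), yielding $\|\grad_T\cdot\bv_T\|_{\GDiss} \leq \mu_{\min}^{-1/2}\|\eps(\bv)\|_{\mu,\GDiss}$. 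So the cross term is bounded by $2\mu_{\min}^{1/2}a_N^{-1/2}\|\bv\|_{\AAA,\GDiss}\|\eps(\bv)\|_{\mu,\GDiss}$. Using $\alpha_N^{-1/2} = (\rho_{\max}\mu_{\min})^{1/4}a_N^{-1/2}$ and $\ks^{-1/2}\omega^{1/2}\RRR^{1/2} = \vels^{1/2} = (\mu_{\min}/\rho_{\max})^{1/4}$, the target prefactor $2\alpha_N^{-1/2}\ks^{-1/2}\omega^{1/2}\RRR^{1/2}$ simplifies to exactly $2\mu_{\min}^{1/2}a_N^{-1/2}$, which closes the estimate.

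The main obstacle is the pointwise control $|\grad_T\cdot\bv_T| \leq |\eps(\bv)|$ on $\GDiss$: one must verify that the tangential divergence of the tangential component can be expressed, up to curvature terms, as a partial trace of the strain tensor evaluated on the surface. The rest is just bookkeeping of the constants $\mu_{\min}$, $\rho_{\max}$, $\vels$, $\ks$, and the identifications $\alpha_\bullet = (\rho_{\max}\mu_{\min})^{-1/2}a_\bullet$.
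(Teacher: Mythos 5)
Your proof follows the paper's essentially verbatim: multiply the basic Korn inequality by $\mu_{\min}$, convert the curvature terms to $\chi\ks^{-1}\omega\|\bv\|_{\AAA,\GDiss}^2$ via the identity $\mu_{\min}\beta_\bullet = \vels(\beta_\bullet/\alpha_\bullet)a_\bullet$ and the definition of $\chi$, bound the cross term by $2\mu_{\min}^{1/2}a_N^{-1/2}\|\bv\|_{\AAA,\GDiss}\|\eps(\bv)\|_{\mu,\GDiss}$, and verify the constant identifications match the target. The step you flag as the main obstacle --- the pointwise control needed to get $\|\grad_T\cdot\bv_T\|_{\mu,\GDiss}\leq\|\eps(\bv)\|_{\mu,\GDiss}$ --- is precisely the one the paper itself invokes without comment, so your scruple points at a shared implicit step rather than a gap specific to your argument; and you are also right that the subscript in the second term of the statement should read $\GDiss$, not $\Omega$.
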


\begin{proof}
Let $\bv \in \BH^s(\Omega)$ for some  $s \in (3/2,2]$.
We start by multiplying \eqref{eq_basic_korn_inequality}
by $\mu_{\min}$, leading to
\begin{multline*}
\mu_{\min} \|\grad \bv\|_\Omega^2
\leq
2\|\eps(\bv)\|_{\mu,\Omega}^2
+
\frac{\mu_{\min}}{\RRR}\left (
\beta_T\|\bv_T\|_{\GDiss}^2 + \beta_N\|\bv\cdot\bn\|_{\GDiss}^2
\right )
\\
+
2\mu_{\min}^{1/2} \|\bv \cdot \bn\|_{\GDiss}\|\grad_T \cdot \bv_T\|_{\mu,\GDiss}.
\end{multline*}
We then observe that,
\begin{equation*}
\mu_{\min} \beta_T
=
\mu_{\min} \frac{\beta_T}{\sqrt{\mu_{\min}\rho_{\max}}\alpha_T} a_T
=
\vels \frac{\beta_T}{\alpha_T} a_T,
\end{equation*}
and that similarly
\begin{equation*}
\mu_{\min} \beta_N
=
\vels \frac{\beta_N}{\alpha_N} a_N.
\end{equation*}
Recalling assumption \eqref{eq_assumption_AAA} on $\AAA$,
this leads to
\begin{align*}
\frac{\mu_{\min}}{\RRR}\left (
\beta_T\|\bv_T\|_{\GDiss}^2 + \beta_N\|\bv\cdot\bn\|_{\GDiss}^2
\right )
&\leq
\frac{\vels}{\RRR}
\max \left \{\frac{\beta_T}{\alpha_T},\frac{\beta_N}{\alpha_N}\right \}
\|\bv\|_{\AAA,\GDiss}^2
\\
&=
\ks^{-1}
\max \left \{\frac{\beta_T}{\alpha_T},\frac{\beta_N}{\alpha_N}\right \}
\omega \|\bv\|_{\AAA,\GDiss}^2
\\
&=
\ks^{-1} \chi
\omega \|\bv\|_{\AAA,\GDiss}^2,
\end{align*}
recalling the definition of $\chi$ in \eqref{eq_definition_chi}.

To finish the proof, we use again \eqref{eq_assumption_AAA}
\begin{align*}
2\mu_{\min}^{1/2} \|\bv \cdot \bn\|_{\GDiss}\|\grad_T \cdot \bv_T\|_{\mu,\GDiss}
&\leq
2\sqrt{\frac{\mu_{\min}}{a_N}} \|\bv\|_{\AAA,\GDiss}\|\eps(\bv)\|_{\mu,\GDiss}
\\
&=
2 \alpha_N^{-1/2}\ks^{-1/2} \omega^{1/2}\|\bv\|_{\AAA,\GDiss}\RRR^{1/2}\|\eps(\bv)\|_{\mu,\GDiss}.
\end{align*}
\end{proof}

\subsection{New estimate for the Robin boundary}

This section presents a new estimate for $\LR_{\bh,\GDiss}(\bv)$
to be used as an alternative to the treatment proposed
in \cite{cummings_feng_2006a}. The strategy employed in \cite{cummings_feng_2006a}
leads to suboptimal results, and is detailed in Section \ref{section_general_robin}
below. Our new estimate requires that the impedance matrix $\AAA$ satisfies
\eqref{eq_assumption_AAA}, but in returns, provides an optimal
scaling in frequency for the stability constant.

\begin{proposition}[Identity on the Robin boundary]
\label{proposition_identity_robin_boundary}
Assume that $\bh \times \bn = \bzero$ on $\GDiss$ and that
$\AAA$ satisfies \eqref{eq_assumption_AAA}. Then for all 
$\bv \in \BH^1_{\GDir}(\Omega) \cap \BH^s(\Omega)$ with $s \in (3/2,2]$,
we have
\begin{multline}
\label{eq_identity_robin_boundary}
(\AAA\bv,(\bh\cdot\grad)\bv)_{\GDiss}
=
\\
2(\AAA\bv,\bh\eps(\bv))_{\GDiss}
+
(\AAA\bv,(\tgrad \bh)\bv)_{\GDiss}
+
a_T((\bh\cdot\bn)(\grad_T \cdot \bv_T),\bv\cdot\bn)_{\GDiss}
\\
-
a_N ((\bn\cdot\grad)\bh (\bv \cdot \bn),\bv)_{\GDiss}
-
a_N((\bh\cdot\bn)(\bv\cdot\bn),\eps(\bv)\bn \cdot \bn)_{\GDiss}.
\end{multline}
\label{prop_identityRobinbc}
\end{proposition}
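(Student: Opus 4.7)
The plan is to reduce the left-hand side of \eqref{eq_identity_robin_boundary} to the right-hand side through an algebraic rewriting of $(\bh\cdot\grad)\bv$ on $\GDiss$ followed by a surface integration by parts. First I would exploit the hypothesis $\bh\times\bn=\bzero$ on $\GDiss$ to write $\bh=(\bh\cdot\bn)\bn$ pointwise there, so that $(\bh\cdot\grad)\bv=(\bh\cdot\bn)\partial_n\bv$ on $\GDiss$. The symmetrization identity $2\eps(\bv)\bn=(\tgrad\bv)\bn+\partial_n\bv$, obtained from $(\tgrad\bv)^T\bn=\partial_n\bv$ and the definition of $\eps$, then gives $\partial_n\bv=2\eps(\bv)\bn-(\tgrad\bv)\bn$. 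Combined with the pointwise product rule $(\tgrad\bv)\bh=\grad(\bv\cdot\bh)-(\tgrad\bh)\bv$, this yields
\[
(\bh\cdot\grad)\bv = 2\eps(\bv)\bh + (\tgrad\bh)\bv - \grad(\bv\cdot\bh)
\]
on $\GDiss$. Taking the $(\AAA\bv,\cdot)_{\GDiss}$ inner product recovers the first two terms on the right-hand side of \eqref{eq_identity_robin_boundary}, so only $I \eq -(\AAA\bv,\grad(\bv\cdot\bh))_{\GDiss}$ remains.

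I would then use \eqref{eq_assumption_AAA} to split $\AAA\bv=a_T\bv_T+a_N(\bv\cdot\bn)\bn$ and treat the tangential and normal contributions to $I$ separately. For the tangential part, $\bv_T\cdot\bn=0$ collapses $\grad(\bv\cdot\bh)$ to $\grad_T(\bv\cdot\bh)$, and the surface divergence theorem on the $C^{2,1}$ manifold $\GDiss$ converts $-a_T(\bv_T,\grad_T(\bv\cdot\bh))_{\GDiss}$ into $a_T(\grad_T\cdot\bv_T,\bv\cdot\bh)_{\GDiss}$; substituting $\bv\cdot\bh=(\bh\cdot\bn)(\bv\cdot\bn)$ produces the $a_T$-term of the target identity. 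For the normal part, I would expand $\partial_n(\bv\cdot\bh)=(\partial_n\bv)\cdot\bh+\bv\cdot(\bn\cdot\grad)\bh=(\bh\cdot\bn)(\partial_n\bv\cdot\bn)+\bv\cdot(\bn\cdot\grad)\bh$ and apply the identity $\partial_n\bv\cdot\bn=\eps(\bv)\bn\cdot\bn$, which again follows directly from the decomposition of $\partial_n\bv$; this yields exactly the two remaining $a_N$-terms.

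The main obstacle is justifying the surface integration by parts on $\GDiss$ under the minimal available regularity. One has to verify that the boundary contribution on the codimension-2 manifold $\partial\GDiss\subset\overline{\GDir}\cap\overline{\GDiss}$ vanishes; this follows from the Dirichlet condition $\bv=\bzero$ on $\GDir$ together with the trace regularity stemming from $\bv\in\BH^s(\Omega)$ with $s\in(3/2,2]$, which makes $\bv_T$ well-defined on $\GDiss$ with zero trace on $\partial\GDiss$. The remaining computations reduce to careful accounting of real quantities (since $\bh$, $\bn$, $a_T$, $a_N$ are all real-valued), and the normal derivative $(\bn\cdot\grad)\bh$ is unambiguous because $\bh\in\BC^1(\overline{\Omega})$ is already defined on all of $\Omega$.
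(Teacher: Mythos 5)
Your proof is correct and follows essentially the same route as the paper: exploit $\bh=(\bh\cdot\bn)\bn$ on $\GDiss$, rewrite $(\bh\cdot\grad)\bv$ via the strain decomposition and the product rule to isolate $-\grad(\bv\cdot\bh)$, then split $\AAA\bv$ into its tangential and normal parts according to \eqref{eq_assumption_AAA} and perform a surface integration by parts on the tangential piece. The one point where you are more careful than the paper is in explicitly verifying that the boundary contribution on $\partial\GDiss$ vanishes (via the Dirichlet condition and the $\BH^s$ trace regularity), a detail the paper leaves implicit since its examples all take $\GDiss$ to be a closed surface.
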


\begin{proof}
Let us first notice that the assumption that $\bh \times \bn = \bzero$ on $\GDiss$
actually implies that
\begin{equation}
\label{tmp_properties_h}
\bh = (\bh \cdot \bn)\bn,
\end{equation}
on $\GDiss$. Then, the definition of $\eps$ shows that
\begin{equation*}
(\bh\cdot\grad)\bv = \bh\grad\bv = 2\bh\eps(\bv)-\bh(\grad\bv)^T.
\end{equation*}
To simplify the last term in the right-hand side, we observe that the identity
\begin{equation*}
\sum_{k=1}^d \bh_k \partial_j \bv_k
=
\sum_{k=1}^d \partial_j(\bh_k\bv_k)
-
\sum_{k=1}^d \partial_j \bh_k \bv_k
\end{equation*}
holds true for $1 \leq j \leq d$, so that we can write in compact form that
\begin{equation*}
-\bh(\grad\bv)^T
=
-\grad(\bh \cdot \bv) + (\grad \bh)\bv.
\end{equation*}
We next work on the first term in the right-hand side. Specifically, we have
\begin{align*}
-\grad(\bh \cdot \bv)
&=
-(\bn\cdot\grad)(\bh \cdot \bv) \bn - \grad_T (\bh\cdot\bv).
\\
&=
- ((\bn \cdot \grad) \bh \cdot \bv) \bn
-(\bh \cdot (\bn\cdot\grad) \bv) \bn
- \grad_T (\bh\cdot\bv).
\\
&=
-((\bn\cdot\grad) \bh \cdot \bv) \bn
-\bh \cdot \bn (\eps(\bv) \bn \cdot \bn)\bn
- \grad_T (\bh\cdot\bv)
\end{align*}
after using \eqref{tmp_properties_h} in the last equality, we indeed have
\[
(\bh \cdot (\bn\cdot\grad) \bv) \bn
=
(\bh \cdot \bn) (\bn \cdot (\bn \cdot \grad \bv))
\bn,
\]
and since
\[
\bn \cdot (\bn\cdot \grad \bv) = \eps(\bv) \bn \cdot \bn,
\]
the last identity holds.
At that point, we have established that  
\begin{align*}
-(\AAA\bv,\grad(\bh\cdot\bn))_{\GDiss}
=
&-
a_N (\bv\cdot\bn,(\bn\cdot\grad)\bh \cdot \bv)_{\GDiss}
\\
&-
a_N (\bv \cdot \bn,(\bh\cdot\bn) \eps(\bv)\bn \cdot \bn)_{\GDiss}
-
a_T (\bv_T,\grad_T(\bh\cdot\bv))_{\GDiss},
\end{align*}
and \eqref{eq_identity_robin_boundary} follows since
\begin{equation*}
-(\bv_T,\grad_T(\bh\cdot\bv))_{\GDiss}
=
(\grad_T \cdot \bv_T,\bh\cdot\bv)_{\GDiss}
=
((\bh\cdot\bn)\grad_T\cdot\bv_T,\bv\cdot\bn)_{\GDiss}
\end{equation*}
where we again used \eqref{tmp_properties_h}.
\end{proof}

\begin{lemma}[Robin boundary estimate]
\label{lemma_robin_boundary}
Assume that $\bh \times \bn = \bzero$ on $\GDiss$ and that
$\AAA$ satisfies \eqref{eq_assumption_AAA}. Then, the estimate
\begin{equation}
\label{eq_Robin0}
\omega \Im (\AAA\bv,(\bh\cdot\grad)\bv)
\leq
M \left \{
\Crob \ks^{1/2}\omega^{1/2}\|\bv\|_{\AAA,\GDiss}\ell^{1/2}\|\eps(\bv)\|_{\mu,\GDiss} + \zeta \omega \|\bv\|_{\AAA,\GDiss}^2
\right \}
\end{equation}
holds true for all $\bv \in \BH^1_{\GDir}(\Omega) \cap \BH^s(\Omega)$ with $s \in (3/2,2]$, and
\begin{equation*}
\Crob \eq
\left (
2 + \sqrt{\frac{\alpha_{\max}}{\alpha_{\min}}}
\right )
\sqrt{\alpha_{\max}},
\qquad
\zeta \eq 2\nu\sqrt{\frac{\alpha_{\max}}{\alpha_{\min}}}.
\end{equation*}
In particular, if $\uuu$ solves \eqref{eq_helmholtz_weak}, we have
\begin{equation}
\label{eq_estimate_LR_GDiss}
|\LR_{\bh,\GDiss}^{2}(\uuu)|
\leq
M \left \{
\Crob \ks^{1/2}\omega^{1/2}\|\uuu\|_{\AAA,\GDiss}\ell^{1/2}\|\eps(\bv)\|_{\mu,\GDiss}
+
\zeta \omega \|\uuu\|_{\AAA,\GDiss}^2
\right \}.
\end{equation}
\end{lemma}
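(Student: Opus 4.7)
The plan is to combine the algebraic identity of Proposition~\ref{proposition_identity_robin_boundary} with elementary Cauchy--Schwarz-type bounds, organizing the five right-hand side terms into two groups: those involving $\eps(\bv)$ on $\GDiss$ (which will produce the $\Crob$ contribution) and those involving $\tgrad \bh$ (which will produce $\zeta$). Throughout, I will use the identities $\|\AAA\bv\|_{\GDiss}^2 \leq a_{\max}\|\bv\|_{\AAA,\GDiss}^2$ and $\|\bv\|_{\GDiss}^2 \leq a_{\min}^{-1}\|\bv\|_{\AAA,\GDiss}^2$, together with the fact that $|\bh| \leq M\ell$ and $|\tgrad \bh| \leq M\nu$ on $\overline{\Omega}$ by \eqref{eq_bounds_h}, and $\|\grad_T \cdot \bv_T\|_{\GDiss} \leq \mu_{\min}^{-1/2}\|\eps(\bv)\|_{\mu,\GDiss}$ as in the proof of Corollary~\ref{corollary_korn_inequality}.

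The two $\tgrad \bh$ terms, namely $(\AAA\bv,(\tgrad\bh)\bv)_{\GDiss}$ and $a_N((\bn\cdot\grad)\bh(\bv\cdot\bn),\bv)_{\GDiss}$, each factor into one estimate of the form $\|\AAA\bv\|_{\GDiss}\|\tgrad\bh\|_\infty\|\bv\|_{\GDiss}$, bounded by $M\nu\sqrt{a_{\max}/a_{\min}}\|\bv\|_{\AAA,\GDiss}^2 = M\nu\sqrt{\alpha_{\max}/\alpha_{\min}}\|\bv\|_{\AAA,\GDiss}^2$; together they yield $\zeta$ after multiplication by $\omega$. The crucial step is the treatment of the $\eps(\bv)$ terms. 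I would combine the term $2(\AAA\bv,\bh\eps(\bv))_{\GDiss}$ with the last term $-a_N((\bh\cdot\bn)(\bv\cdot\bn),\eps(\bv)\bn\cdot\bn)_{\GDiss}$ by exploiting $\bh = (\bh\cdot\bn)\bn$ on $\GDiss$, which rewrites their sum as
\begin{equation*}
\bigl((\bh\cdot\bn)\bigl[\,2 a_T \bv_T + a_N (\bv\cdot\bn)\bn\,\bigr],\eps(\bv)\bn\bigr)_{\GDiss}.
\end{equation*}
The pointwise inequality $|2 a_T\bv_T + a_N(\bv\cdot\bn)\bn|^2 \leq 4 a_{\max}(a_T|\bv_T|^2+a_N|\bv\cdot\bn|^2)$ then delivers a factor $2\sqrt{a_{\max}}$ rather than the suboptimal $2\sqrt{a_T}+\sqrt{a_N}\leq 3\sqrt{a_{\max}}$ that separate bounds would give. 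The remaining term $a_T((\bh\cdot\bn)(\grad_T\cdot\bv_T),\bv\cdot\bn)_{\GDiss}$ is bounded directly by $M(a_T/\sqrt{a_N})\ell\mu_{\min}^{-1/2}\|\bv\|_{\AAA,\GDiss}\|\eps(\bv)\|_{\mu,\GDiss}$, so its dimensionless constant is $\alpha_T/\sqrt{\alpha_N}\leq\sqrt{\alpha_{\max}}\sqrt{\alpha_{\max}/\alpha_{\min}}$.

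Adding these contributions after multiplication by $\omega$ and converting dimensions via $\omega\ell\mu_{\min}^{-1/2} = (\mu_{\min}\rho_{\max})^{-1/4}\omega^{1/2}\ell^{1/2}\ks^{1/2}$ and $a = \sqrt{\rho_{\max}\mu_{\min}}\,\alpha$ produces exactly \eqref{eq_Robin0} with the stated $\Crob$ and $\zeta$. For the second bound \eqref{eq_estimate_LR_GDiss}, I substitute the Robin condition $\sig(\uuu)\bn = i\omega\AAA\uuu$ into the definition of $\LR_{\bh,\GDiss}(\uuu)$, which converts it precisely into $-2\omega\Im(\AAA\uuu,(\bh\cdot\grad)\uuu)_{\GDiss}$, and apply \eqref{eq_Robin0}. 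The main technical subtlety is the algebraic combination of the two $\eps(\bv)\bn$ terms: treating them separately loses optimality in the sense that the constant would depend on $a_T$ and $a_N$ individually with a worse factor, and only the grouping above recovers the clean form with a single $\sqrt{\alpha_{\max}}$ factor. The rest is careful bookkeeping of dimensionless ratios.
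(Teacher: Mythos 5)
Your proof is correct and follows essentially the same skeleton as the paper's: invoke Proposition~\ref{proposition_identity_robin_boundary}, let the two $\tgrad\bh$ terms (second and fourth) produce $\zeta$, let the remaining three terms produce $\Crob$, and bookkeep the dimensional factors via $\ell\omega\mu_{\min}^{-1/2}=\ks^{1/2}\omega^{1/2}\ell^{1/2}$ and $a=\sqrt{\rho_{\max}\mu_{\min}}\,\alpha$. The one genuine difference is the grouping of the $\eps(\bv)\bn$-terms: you algebraically combine the first and fifth terms into $\bigl((\bh\cdot\bn)[2a_T\bv_T+a_N(\bv\cdot\bn)\bn],\eps(\bv)\bn\bigr)_{\GDiss}$ and exploit the orthogonal splitting in the pointwise inequality $|2a_T\bv_T+a_N(\bv\cdot\bn)\bn|^2\leq 4a_{\max}\,(a_T|\bv_T|^2+a_N|\bv\cdot\bn|^2)$, while the paper groups the third and fifth terms and controls the first term separately. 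Both routes arrive at exactly the same constant $\Crob=(2+\sqrt{\alpha_{\max}/\alpha_{\min}})\sqrt{\alpha_{\max}}$: your contributions are $2\sqrt{\alpha_{\max}}$ (first+fifth) and $\alpha_T/\sqrt{\alpha_N}$ (third), the paper's are $2\sqrt{\alpha_{\max}}$ (first) and $\max\{\sqrt{\alpha_T/\alpha_N}\sqrt{\alpha_T},\sqrt{\alpha_N}\}$ (third+fifth), and both are bounded by $\sqrt{\alpha_{\max}/\alpha_{\min}}\sqrt{\alpha_{\max}}$. So your claim that grouping is \emph{necessary} to ``recover the clean form with a single $\sqrt{\alpha_{\max}}$ factor'' overstates the case — the paper's grouping delivers the same constant — though your splitting is arguably more transparent.

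One small flag, shared with the paper's own statement and proof and not a fault of yours: you correctly identify $\LR_{\bh,\GDiss}(\uuu)=-2\omega\Im(\AAA\uuu,(\bh\cdot\grad)\uuu)_{\GDiss}$, yet \eqref{eq_estimate_LR_GDiss} as written bounds $|\LR_{\bh,\GDiss}(\uuu)|$ by only $M\{\ldots\}$ rather than $2M\{\ldots\}$; the intermediate bound actually proved (and the one you reproduce) controls the absolute value $\omega|(\AAA\bv,(\bh\cdot\grad)\bv)_{\GDiss}|$, so deducing \eqref{eq_estimate_LR_GDiss} in the stated form requires the missing factor $2$ to be either absorbed into $\LR_{\bh,\GDiss}$ (i.e.\ dropping the $2\Re$ in its definition) or propagated. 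Your proof inherits this discrepancy without noticing it; worth recording when stating the final form of \eqref{eq_estimate_LR_GDiss}.
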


\begin{proof}
We start by invoking \eqref{eq_identity_robin_boundary} and \eqref{eq_upper_bounds_h}
to show that
\begin{multline}
\frac{1}{M} |(\AAA\bv,(\bh\cdot\grad)\bv)_{\GDiss}|
\leq
2\RRR\|\AAA\bv\|_{\GDiss}\|\eps(\bv)\|_{\GDiss}
+
\nu \|\AAA\bv\|_{\GDiss}\|\bv\|_{\GDiss}
\\
+a_T\ell \|\grad_T \cdot \bv_T\|_{\GDiss}\|\bv\cdot\bn\|_{\GDiss}
+
a_N\nu \|\bv\cdot\bn\|_{\GDiss}\|\bv\|_{\GDiss}
\\
+
a_N\ell\|\bv\cdot\bn\|_{\GDiss}\|\eps(\bv)\bn \cdot \bn\|_{\GDiss}
\label{eq_Robin1}
\end{multline}

We then write that
\begin{equation*}
\|\AAA\bv\|_{\GDiss}\|\bv\|_{\GDiss}
\leq
\sqrt{\frac{\alpha_{\max}}{\alpha_{\min}}}\|\bv\|_{\AAA,\GDiss}^2,
\end{equation*}
and
\begin{equation*}
a_N \|\bv\cdot\bn\|_{\GDiss}\|\bv\|_{\GDiss}
\leq
\sqrt{\frac{\alpha_N}{\alpha_{\min}}}\|\bv\|_{\AAA,\GDiss}^2,
\end{equation*}
so that
\begin{equation}
\label{tmp_control_2nd_4th}
\Big (
\nu\|\AAA\bv\|_{\GDiss} + a_N\nu\|\bv\cdot\bn\|_{\GDiss}
\Big )
\|\bv\|_{\GDiss}
\leq
2\nu \sqrt{\frac{\alpha_{\max}}{\alpha_{\min}}}\|\bv\|_{\AAA,\GDiss}^2.
\end{equation}
After multiplying by $M\omega$, \eqref{tmp_control_2nd_4th} controls the second
and forth terms in the right-hand side of \eqref{eq_Robin1} by the last term in
the right-hand side of \eqref{eq_Robin0}. Thus, it remains to bound the remaining
terms by the first term in the right-hand side of \eqref{eq_Robin0}.

On the one hand, for the third and fifth terms, we write that
\begin{align*}
&
a_T\|\grad_T \cdot \bv_T\|_{\GDiss}\|\bv\cdot\bn\|_{\GDiss}
+
a_N\|\bv\cdot\bn\|_{\GDiss}\|\eps(\bv)\bn \cdot \bn\|_{\GDiss}
\\
&\leq
\sqrt{\frac{\alpha_T}{\alpha_N}}\sqrt{a_T}
\|\grad_T \cdot \bv_T\|_{\GDiss}\|\bv\|_{\AAA,\GDiss}
+
\sqrt{a_N}
\|\bv\|_{\AAA,\GDiss}\|\eps(\bv)\bn \cdot \bn\|_{\GDiss}
\\
&\leq
\max \left \{\sqrt{\frac{\alpha_T}{\alpha_N}}\sqrt{\alpha_T},\sqrt{\alpha_N}\right \}
(\mu_{\min}\rho_{\max})^{1/4}
\left (
\|\grad_T \cdot \bv_T\|_{\GDiss} + \|\eps(\bv)\bn \cdot \bn\|_{\GDiss}
\right )
\|\bv\|_{\AAA,\GDiss}
\\
&\leq
\max \left \{\sqrt{\frac{\alpha_T}{\alpha_N}}\sqrt{\alpha_T},\sqrt{\alpha_N}\right \}
\left (\frac{\rho_{\max}}{\mu_{\min}}\right )^{1/4}
\|\bv\|_{\AAA,\GDiss}
\|\eps(\bv)\|_{\mu,\GDiss}
\\
&=
\max \left \{\sqrt{\frac{\alpha_T}{\alpha_N}}\sqrt{\alpha_T},\sqrt{\alpha_N}\right \}
\vels^{-1/2}
\|\bv\|_{\AAA,\GDiss}
\|\eps(\bv)\|_{\mu,\GDiss}.
\end{align*}
On the other hand, we control the first term in the right-hand side
of \eqref{eq_Robin1} as follows:
\begin{equation*}
\|\AAA\bv\|_{\GDiss}\|\eps(\bv)\|_{\GDiss}
\leq
\sqrt{\frac{a_{\max}}{\mu_{\min}}}\|\bv\|_{\AAA,\GDiss}\|\eps(\bv)\|_{\mu,\GDiss}
=
\sqrt{\alpha_{\max}} \vels^{-1/2}\|\bv\|_{\AAA,\GDiss}\|\eps(\bv)\|_{\mu,\GDiss},
\end{equation*}
We can combine these last two estimates as
\begin{multline*}
2\|\AAA\bv\|_{\GDiss}\|\eps(\bv)\|_{\GDiss}
+
a_T\|\grad_T \cdot \bv_T\|_{\GDiss}\|\bv\cdot\bn\|_{\GDiss}
+
a_N\|\bv\cdot\bn\|_{\GDiss}\|\eps(\bv)\bn \cdot \bn\|_{\GDiss}
\\
\leq
\left (
\max \left \{\sqrt{\frac{\alpha_T}{\alpha_N}}\sqrt{\alpha_T},\sqrt{\alpha_N}\right \}
+
2\sqrt{\alpha_{\max}}
\right )
\vels^{-1/2}\|\bv\|_{\AAA,\GDiss}\|\eps(\bv)\|_{\mu,\GDiss}
\\
\leq
\left (
2 + \sqrt{\frac{\alpha_{\max}}{\alpha_{\min}}}
\right )
\sqrt{\alpha_{\max}}
\vels^{-1/2}\|\bv\|_{\AAA,\GDiss}\|\eps(\bv)\|_{\mu,\GDiss},
\end{multline*}
which concludes the proof.
\end{proof}

\begin{lemma}[Improved boundary estimate for the sphere]
\label{lemma_improved_sphere}
Assume that $\GDiss = \partial B_\RRR$, that $\bh = \bx$
in a neighborhood of $\GDiss$, and that $\AAA$ satisfies \eqref{eq_assumption_AAA}.
Then, the estimate given in \eqref{eq_estimate_LR_GDiss} holds true with
$\zeta \eq 0$.
\end{lemma}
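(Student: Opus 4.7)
The plan is to revisit the proof of Lemma \ref{lemma_robin_boundary}, starting from the identity \eqref{eq_identity_robin_boundary} of Proposition \ref{proposition_identity_robin_boundary}, which is directly applicable since $\bh = \bx$ is parallel to $\bn$ on $\GDiss = \partial B_\RRR$, so that $\bh \times \bn = \bzero$ there.

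Under the hypothesis $\bh = \bx$ in a neighborhood of $\GDiss$, three crucial simplifications hold on $\GDiss$: namely, $\bh \cdot \bn = \RRR$, $\tgrad \bh = \tens{I}$, and $(\bn \cdot \grad)\bh = \bn$. Substituting these into the right-hand side of \eqref{eq_identity_robin_boundary}, the second term reduces to $(\AAA \bv, \bv)_{\GDiss} = \|\bv\|_{\AAA,\GDiss}^2$, while the fourth term collapses to $-a_N((\bv\cdot\bn)\bn, \bv)_{\GDiss} = -a_N \|\bv\cdot\bn\|_{\GDiss}^2$. Both quantities are \emph{real}, hence they drop out when the imaginary part of the identity is taken.

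In the proof of Lemma \ref{lemma_robin_boundary}, precisely these two terms produced the $\zeta\,\omega\|\bv\|_{\AAA,\GDiss}^2$ contribution to the right-hand side of \eqref{eq_Robin0}; their disappearance immediately delivers the improved estimate with $\zeta = 0$. The three surviving terms of \eqref{eq_identity_robin_boundary}---those featuring $\eps(\bv)$, $\grad_T \cdot \bv_T$, and $\eps(\bv)\bn \cdot \bn$---can be handled by the very same Cauchy--Schwarz manipulations already carried out in the proof of Lemma \ref{lemma_robin_boundary} (noting that on $\GDiss$ the magnitude $|\bh|=\RRR$ replaces the generic bound $M\RRR$, but $M\geq 1$ so the general factor $M$ may be kept), producing the unchanged $\Crob$-term. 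The conclusion \eqref{eq_estimate_LR_GDiss} for $\uuu$ then follows, as in Lemma \ref{lemma_robin_boundary}, using the Robin condition $\sig(\uuu)\bn = i\omega \AAA \uuu$ on $\GDiss$ to relate $\LR_{\bh,\GDiss}(\uuu)$ to $-2\omega \Im(\AAA \uuu, (\bh\cdot\grad)\uuu)_{\GDiss}$.

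No substantive technical obstacle arises; the key (and essentially only) observation is that $\tgrad \bh = \tens{I}$ and $(\bn\cdot\grad)\bh = \bn$ when $\bh = \bx$ on a sphere, which renders the two ``lower-order'' terms in \eqref{eq_identity_robin_boundary} real-valued, and hence harmless after extracting the imaginary part.
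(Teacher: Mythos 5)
Your proof is correct and follows essentially the same route as the paper: both start from the identity of Proposition \ref{proposition_identity_robin_boundary}, observe that $\tgrad\bh = \tens{I}$ and $(\bn\cdot\grad)\bh = \bn$ on $\GDiss$ when $\bh=\bx$, note that the second and fourth terms of \eqref{eq_identity_robin_boundary} thus reduce to the real quantities $\|\bv\|_{\AAA,\GDiss}^2$ and $-a_N\|\bv\cdot\bn\|_{\GDiss}^2$ which vanish upon taking the imaginary part, and then conclude by re-running the Cauchy--Schwarz estimates of Lemma \ref{lemma_robin_boundary} on the three surviving terms. The only difference is cosmetic: you make explicit the identification of the dropped terms with those that generated the $\zeta$-contribution in the proof of Lemma \ref{lemma_robin_boundary}, which the paper leaves implicit.
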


\begin{proof}
We start by simplifying the identity given in \eqref{eq_identity_robin_boundary}
in Proposition \ref{proposition_identity_robin_boundary}. Indeed, since
here $\bn = \RRR^{-1} \bx$, and $\bh = \bx$, we have
\begin{align*}
(\AAA\bv,(\bh\cdot\grad)\bv)_{\GDiss}
&=
2(\AAA\bv,\bh\eps(\bv))_{\GDiss}
+
(\AAA\bv,(\tgrad \bh)\bv)_{\GDiss}
+
a_T((\bh\cdot\bn)(\grad_T \cdot \bv_T),\bv\cdot\bn)_{\GDiss}
\\
&-
a_N ((\bn\cdot\grad)\bh (\bv \cdot \bn),\bv)_{\GDiss}
-
a_N((\bh\cdot\bn)(\bv\cdot\bn),\eps(\bv)\bn \cdot \bn)_{\GDiss}.
\\
&=
2(\AAA\bv,\bh\eps(\bv))_{\GDiss}
+
\|\bv\|_{\AAA,\GDiss}^2
+
a_T\ell ((\grad_T \cdot \bv_T),\bv\cdot\bn)_{\GDiss}
\\
&-
a_N \|\bv \cdot \bn\|_{\GDiss}^2
-
a_N\ell (\bv\cdot\bn,\eps(\bv)\bn \cdot \bn)_{\GDiss}.
\end{align*}
As a result, compared to the general case, the two terms
associated with $\zeta$ in \eqref{eq_estimate_LR_GDiss}
disappear when taking the imaginary part:
\begin{align*}
\Im (\AAA\bv,(\bh\cdot\grad)\bv)_{\GDiss}
&=
2(\AAA\bv,\bh\eps(\bv))_{\GDiss}
\\
&+
a_T\ell ((\grad_T \cdot \bv_T),\bv\cdot\bn)_{\GDiss}
-
a_N\ell (\bv\cdot\bn,\eps(\bv)\bn \cdot \bn)_{\GDiss}.
\end{align*}
The remainder of the proof proceeds as in Lemma \ref{lemma_robin_boundary}.
\end{proof}

\section{Sharp stability estimates for simple Robin boundaries}
\label{section_simple_robin}

In this section, we consider the situation where $\bh \times \bn = \bzero$ on $\GDiss$
and $\AAA$ satisfies \eqref{eq_assumption_AAA}, which allows us to prove
sharp stability estimates. We also assume that the assumptions listed in
\eqref{eq_assumptions_multiplier} and \eqref{eq_assumption_sign_boundary}
hold true, and that we can take $m > 0$ in \eqref{eq_lower_bounds_h}.
Throughout this section, we also assume that $\uuu \in \BH^1_{\GDir}(\Omega)$
is a solution to \eqref{eq_helmholtz_weak}. Notice that then, $\uuu \in \BH^s(\Omega)$
for some $s \in (3/2,2]$ due to Theorem \ref{theorem_morawetz_identity}.

Since the bound we establish in this section is close from sharp, we take
care to make the constant fully-explicit. Although it makes the derivation
more intricate, having a fully-explicit constant it may be useful, e.g., in assessing
discretization errors in numerical approximations~\cite{chaumontfrelet_ern_vohralik_2021a}.

\begin{lemma}[Morawetz estimate]
\label{lemma_morawetz_estimate}
We have
\begin{multline}
\label{eq_morawetz_estimate}
2\frac{\gamma}{\MMM}\omega^2\|\uuu\|_{\rho,\Omega}^2
+
2\frac{\mmm}{\MMM}\RRR\|\eps(\uuu)\|_{\mu,\GDiss}^2
\leq
\\
\left \{
\frac{(d-2+\varepsilon)}{M} + \zeta + \frac{\ks}{\alpha_{\min}}
\right \}
\|\fff\|_{\rho,\Omega}\|\uuu\|_{\rho,\Omega} +
2\ks\omega^{-1}\|\fff\|_{\rho,\Omega}\sqrt{\mu_{\min}}\|\grad \uuu\|_\Omega
\\
+
\Crob \ks^{1/2}\omega^{1/2}\|\uuu\|_{\AAA,\GDiss}\RRR^{1/2}\|\eps(\uuu)\|_{\mu,\GDiss}.
\end{multline}
\end{lemma}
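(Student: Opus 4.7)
The starting point of the proof will be the Morawetz identity \eqref{eq_morawetz_identity} applied to $\uuu$, whose validity is granted by Theorem \ref{theorem_morawetz_identity} together with $\uuu\in\BH^s(\Omega)$. The strategy is to lower-bound the three terms on the left-hand side by the two target quantities $2\gamma\omega^2\|\uuu\|_{\rho,\Omega}^2$ and $2\mmm\RRR\|\eps(\uuu)\|_{\mu,\GDiss}^2$, to upper-bound the four terms on the right-hand side by the four target terms, and to divide through by $\MMM\geq 1$ at the end. The only non-trivial absorption occurs for the term $\omega^2\|\uuu\|_{\rho,\Omega}^2$, and the rest of the proof amounts to systematic applications of Cauchy--Schwarz together with the G\aa rding identities \eqref{eq_garding_real}--\eqref{eq_garding_imag} and the conversion formulas relating $\ks$, $\vels$ and $\alpha_{\min}$.

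For the left-hand side of \eqref{eq_morawetz_identity} I would proceed as follows. Assumption \eqref{eq_assumption_L2_Omega} immediately replaces the volume mass integral by its lower bound $(d-\eta)\omega^2\|\uuu\|_{\rho,\Omega}^2$. The Dirichlet boundary lemma combined with $\bh\cdot\bn\leq 0$ on $\GDir$ shows that $\LB_{\bh,\GDir}(\uuu)\geq 0$, hence can be discarded. For $\LB_{\bh,\GDiss}(\uuu)$, one uses $\bh\cdot\bn\geq\mmm\RRR$ on $\GDiss$ together with the point-wise bound $\sig(\uuu)\!:\!\eps(\overline{\uuu})\geq 2\mu|\eps(\uuu)|^2$ to obtain the lower bound $2\mmm\RRR\|\eps(\uuu)\|_{\mu,\GDiss}^2$.

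For the right-hand side I handle the four contributions separately. The source term is bounded by Cauchy--Schwarz and $|\bh|\leq\MMM\RRR$, giving $2\MMM\RRR\sqrt{\rho_{\max}}\|\fff\|_{\rho,\Omega}\|\grad\uuu\|_\Omega$; substituting $\RRR\sqrt{\rho_{\max}}=\ks\omega^{-1}\sqrt{\mu_{\min}}$ then yields the second target term (after division by $\MMM$). The term $\LR_{\bh,\Omega}(\uuu)$ is controlled using \eqref{eq_assumption_LR_Omega} together with the real G\aa rding identity \eqref{eq_garding_real}, which produces $(d-2+\varepsilon)\{\Re(\rho\fff,\uuu)_\Omega+\omega^2\|\uuu\|_{\rho,\Omega}^2\}$. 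Moving the $\omega^2$-piece to the left-hand side and combining with Step 1 leaves the coefficient $d-\eta-(d-2+\varepsilon)=2\gamma$ in front of $\omega^2\|\uuu\|_{\rho,\Omega}^2$, which is precisely what is needed, while the remaining piece $(d-2+\varepsilon)\|\fff\|_{\rho,\Omega}\|\uuu\|_{\rho,\Omega}$ contributes to the first target term. The dissipative volume term $\omega^2\int_{\GDiss}(\bh\cdot\bn)\rho|\uuu|^2$ is handled by combining $\bh\cdot\bn\leq\MMM\RRR$, the point-wise inequality $\rho\leq(\rho_{\max}/a_{\min})\AAA$, and the imaginary G\aa rding identity \eqref{eq_garding_imag} to convert $\omega\|\uuu\|_{\AAA,\GDiss}^2$ into $\|\fff\|_{\rho,\Omega}\|\uuu\|_{\rho,\Omega}$; the arithmetic of $\ks$ and $\alpha_{\min}$ matches exactly the factor $\MMM\ks/\alpha_{\min}$. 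Finally, $\LR_{\bh,\GDiss}(\uuu)$ is treated by Lemma \ref{lemma_robin_boundary}: the Robin condition $\sig(\uuu)\bn=i\omega\AAA\uuu$ rewrites this term as $2\omega\Im(\AAA\uuu,(\bh\cdot\grad)\uuu)_{\GDiss}$, the Lemma supplies the $\Crob$-contribution (last target term) and a $\zeta\omega\|\uuu\|_{\AAA,\GDiss}^2$ contribution that is once more converted to $\zeta\|\fff\|_{\rho,\Omega}\|\uuu\|_{\rho,\Omega}$ via \eqref{eq_garding_imag}, feeding the first target term.

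The main obstacle is not a deep idea but the careful dimensional bookkeeping required to make the four target terms materialize with the exact constants claimed; in particular, one must keep track of how $\ks=\omega\RRR/\vels$ and $\alpha_{\min}=(\rho_{\max}\mu_{\min})^{1/2}a_{\min}$ combine with the weighted norms on $\Omega$ and $\GDiss$ after two separate uses of \eqref{eq_garding_imag}. Once these bounds are collected, summing them and dividing by $\MMM$ yields \eqref{eq_morawetz_estimate} immediately.
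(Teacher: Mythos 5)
Your proposal is correct and follows essentially the same route as the paper: the same lower bound on the left-hand side of the Morawetz identity (discarding $\LB_{\bh,\GDir}$, keeping $2\mmm\RRR\|\eps(\uuu)\|_{\mu,\GDiss}^2$ and $(d-\eta)\omega^2\|\uuu\|_{\rho,\Omega}^2$), the same four term-by-term upper bounds on the right-hand side using the two G\aa rding identities, the same absorption of $(d-2+\varepsilon)\omega^2\|\uuu\|_{\rho,\Omega}^2$ to obtain the factor $2\gamma$, the same use of Lemma \ref{lemma_robin_boundary}, and the same final division by $\MMM$. The only tiny imprecision is the sign in the rewriting $\LR_{\bh,\GDiss}(\uuu)=\pm 2\omega\Im(\AAA\uuu,(\bh\cdot\grad)\uuu)_{\GDiss}$ (the Robin condition gives a minus sign), which is irrelevant since you then bound the absolute value.
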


\begin{proof}
We have $\LB_{\bh,\GDir}(\uuu) \geq 0$ by assumption. On the other hand,
\eqref{eq_stress_identities} and \eqref{eq_lower_bounds_h} show that
\begin{equation*}
\LB_{\bh,\GDiss}(\uuu)
\geq
2\mmm\RRR\|\eps(\uuu)\|_{\mu,\GDiss}^2,
\end{equation*}
so we lower bound the left-hand side of \eqref{eq_morawetz_identity} as follows:
\begin{multline}
\label{eq_lower_bd_rhs2.4}
(d-\eta)\omega^2\|\uuu\|_{\rho,\Omega}^2
+
2\mmm\RRR\|\eps(\uuu)\|_{\mu,\GDiss}^2
\leq
\\
\omega^2 \int_\Omega (\div \bh + \LV_{\bh}(\rho))\rho|\uuu|^2
+
\LB_{\bh,\GDiss}(\uuu)
+
\LB_{\bh,\GDir}(\uuu).
\end{multline}

We then estimate the four terms in the right-hand side of \eqref{eq_morawetz_identity}.
First, we have
\begin{equation*}
2\Re (\rho\fff,(\bh\cdot\grad)\uuu)_\Omega
\leq
2\MMM\RRR\|\fff\|_{\rho,\Omega}\|\grad\uuu\|_{\rho,\Omega}
\leq
2\MMM\ks\omega^{-1}\|\fff\|_{\rho,\Omega}\sqrt{\mu_{\min}}\|\grad\uuu\|_\Omega.
\end{equation*}
Then, using the assumption on the multiplier $\bh$ in \eqref{eq_assumption_LR_Omega}
and the G\aa rding inequality recorded in \eqref{eq_garding_real}, we write that
\begin{align*}
\LR_{\bh,\Omega}(\uuu)
&\leq
(d-2+\varepsilon) \left \{
2\|\eps(\uuu)|_{\mu,\Omega}^2 + \|\div \uuu\|_{\lambda,\Omega}^2
\right \}
\\
&=
(d-2+\varepsilon)
\left \{
\Re (\rho\fff,\uuu)_\Omega + \omega^2\|\uuu\|_{\rho,\Omega}^2
\right \}
\\
&=
(d-2+\varepsilon)
\omega^2\|\uuu\|_{\rho,\Omega}^2
+
(d-2+\varepsilon)
\|\fff\|_{\rho,\Omega}\|\uuu\|_{\rho,\Omega}.
\end{align*}
For the third term in the right-hand side of \eqref{eq_morawetz_identity},
we simply use \eqref{eq_estimate_LR_GDiss} and \eqref{eq_garding_imag},
which gives
\begin{align*}
\LR_{\bh,\GDiss}(\uuu)
&\leq
M
\left \{
\Crob \ks^{1/2}\omega^{1/2}\|\uuu\|_{\AAA,\GDiss}\ell^{1/2}\|\eps(\uuu)\|_{\mu,\GDiss}
+
\zeta \omega\|\uuu\|_{\AAA,\GDiss}^2
\right \}
\\
&\leq
M
\Crob \ks^{1/2}\omega^{1/2}\|\uuu\|_{\AAA,\GDiss}\ell^{1/2}\|\eps(\uuu)\|_{\mu,\GDiss}
+
M\zeta \|\fff\|_{\rho,\Omega}\|\uuu\|_{\rho,\Omega}.
\end{align*}
Finally, recalling the upper bound on $\|\bh\|_{\BL^\infty(\Omega)}$
in \eqref{eq_lower_bounds_h} and the relation
$\rho_{\max} \leq a_{\min}/(\alpha_{\min} \vels)$
between $\rho$ and $\AAA$, we derive the estimate
\begin{equation*}
\omega^2 \int_{\GDiss}(\bh\cdot\bn)\rho|\uuu|^2
\leq
\omega^2\MMM \ell \|\uuu\|_{\rho,\GDiss}^2
\leq
\frac{M}{\alpha_{\min}} \ks \omega\|\uuu\|_{\AAA,\GDiss}^2
\leq
\frac{M}{\alpha_{\min}} \ks \|\fff\|_{\rho,\Omega}\|\uuu\|_{\rho,\Omega}
\end{equation*}
where we employed \eqref{eq_garding_imag} in the last inequality.

We may sum up the four above estimates as follows,
\begin{align*}
&2\Re (\rho\fff,(\bh\cdot\grad)\uuu)_\Omega
+
\omega^2 \int_{\GDiss} (\bh\cdot\bn)\rho|\uuu|^2
+
\LR_{\bh,\GDiss}(\uuu) + \LR_{\bh,\Omega}(\uuu)
\\
&\leq
2\MMM\ks\omega^{-1}\|\fff\|_{\rho,\Omega}\sqrt{\mu_{\min}}\|\grad \uuu\|_\Omega
\\
&+
(d-2+\varepsilon) \omega^2\|\uuu\|_{\rho,\Omega}^2
\\
&+
\left \{
(d-2+\varepsilon) + M\zeta + \frac{M}{\alpha_{\min}}\ks
\right \}
\|\fff\|_{\rho,\Omega}\|\uuu\|_{\rho,\Omega} +
\\
&+
M\Crob \ks^{1/2}\omega^{1/2}\|\uuu\|_{\AAA,\GDiss}\RRR^{1/2}\|\eps(\uuu)\|_{\mu,\GDiss},
\end{align*}
so that using \eqref{eq_morawetz_identity} and \eqref{eq_lower_bd_rhs2.4} and removing
$(d-2+\varepsilon)\omega^2\|\uuu\|_{\rho,\Omega}^2$ from
both sides yields
\begin{multline*}
(2-\varepsilon-\eta)\omega^2\|\uuu\|_{\rho,\Omega}^2
+
2\mmm\RRR\|\eps(\uuu)\|_{\mu,\GDiss}^2
\leq
\\
\left \{
(d-2+\varepsilon) + M\zeta + \frac{M}{\alpha_{\min}}\ks
\right \}
\|\fff\|_{\rho,\Omega}\|\uuu\|_{\rho,\Omega} +
2\MMM\ks\omega^{-1}\|\fff\|_{\rho,\Omega}\sqrt{\mu_{\min}}\|\grad \uuu\|_\Omega
\\
+
\MMM\Crob \ks^{1/2}\omega^{1/2}\|\uuu\|_{\AAA,\GDiss}\RRR^{1/2}\|\eps(\uuu)\|_{\mu,\GDiss}.
\end{multline*}
The estimate in \eqref{eq_morawetz_estimate} then follows using the definition
of $\gamma$ and dividing by $\MMM$.
\end{proof}

\begin{lemma}[Young inequalities]
The estimate
\begin{equation}
\label{eq_korn_rhs}
\begin{aligned}
2\ks \omega^{-1}\|\fff\|_{\rho,\Omega}\mu_{\min}^{1/2} \|\grad \uuu\|_\Omega
&\leq
\left (
\sqrt{2} (1+\chi\ks^{-1})\ks^{2-\theta}
+
2\alpha_N^{-1/2}\ks^{3/2-\tau}
\right )\omega^{-2}\|\fff\|_{\rho,\Omega}^2
\\
&+
(\sqrt{2} \ks^\theta + 2\ks)\|\fff\|_{\rho,\Omega}\|\uuu\|_{\rho,\Omega}
\\
&+
2\ks^\tau \omega^{1/2}\|\uuu\|_{\AAA,\GDiss}\RRR^{1/2}\|\eps(\uuu)\|_{\mu,\GDiss}.
\end{aligned}
\end{equation}
holds true for all $\theta,\tau \in \R$.
\end{lemma}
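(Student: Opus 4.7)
The plan is to combine the weighted Korn inequality of Corollary \ref{corollary_korn_inequality} with the two G\aa rding identities to bound $\mu_{\min}\|\grad\uuu\|_\Omega^2$ by a sum of three manageable pieces, then take a square root and apply Young's inequality with $\theta$ and $\tau$ as balancing exponents.

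First I would apply Corollary \ref{corollary_korn_inequality} to $\uuu$, producing an upper bound on $\mu_{\min}\|\grad\uuu\|_\Omega^2$ involving $2\|\eps(\uuu)\|_{\mu,\Omega}^2$, $\chi\ks^{-1}\omega\|\uuu\|_{\AAA,\GDiss}^2$, and the boundary cross term $2\alpha_N^{-1/2}\ks^{-1/2}\omega^{1/2}\|\uuu\|_{\AAA,\GDiss}\RRR^{1/2}\|\eps(\uuu)\|_{\mu,\GDiss}$. Discarding the non-negative term $\|\div\uuu\|_{\lambda,\Omega}^2$ in the real G\aa rding identity \eqref{eq_garding_real} gives $2\|\eps(\uuu)\|_{\mu,\Omega}^2\leq\omega^2\|\uuu\|_{\rho,\Omega}^2+\|\fff\|_{\rho,\Omega}\|\uuu\|_{\rho,\Omega}$, while the imaginary G\aa rding identity \eqref{eq_garding_imag} yields $\omega\|\uuu\|_{\AAA,\GDiss}^2\leq\|\fff\|_{\rho,\Omega}\|\uuu\|_{\rho,\Omega}$. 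Substituting these two bounds back into the Korn estimate, I arrive at
\begin{equation*}
\mu_{\min}\|\grad\uuu\|_\Omega^2\;\leq\;A+B+C,
\end{equation*}
with $A=(1+\chi\ks^{-1})\|\fff\|_{\rho,\Omega}\|\uuu\|_{\rho,\Omega}$, $B=\omega^2\|\uuu\|_{\rho,\Omega}^2$, and $C=2\alpha_N^{-1/2}\ks^{-1/2}\omega^{1/2}\|\uuu\|_{\AAA,\GDiss}\RRR^{1/2}\|\eps(\uuu)\|_{\mu,\GDiss}$.

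Next, I take square roots via the subadditive bound $\sqrt{A+B+C}\leq\sqrt A+\sqrt B+\sqrt C$, multiply by $2\ks\omega^{-1}\|\fff\|_{\rho,\Omega}$, and deal with the three resulting pieces individually. The $\sqrt B$ piece evaluates directly to $2\ks\|\fff\|_{\rho,\Omega}\|\uuu\|_{\rho,\Omega}$. For the $\sqrt A$ piece I apply Young's inequality $2pq\leq p^2+q^2$ with the choices $p^2=\sqrt 2(1+\chi\ks^{-1})\ks^{2-\theta}\omega^{-2}\|\fff\|_{\rho,\Omega}^2$ and $q^2=\sqrt 2\ks^\theta\|\fff\|_{\rho,\Omega}\|\uuu\|_{\rho,\Omega}$; a direct computation shows $2pq=2\sqrt 2\ks\omega^{-1}\|\fff\|_{\rho,\Omega}\sqrt A\geq 2\ks\omega^{-1}\|\fff\|_{\rho,\Omega}\sqrt A$, so the prescribed coefficients are produced, with $\theta$ playing the role of the balancing exponent. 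An entirely analogous Young's application to the $\sqrt C$ piece, now with $p^2=2\alpha_N^{-1/2}\ks^{3/2-\tau}\omega^{-2}\|\fff\|_{\rho,\Omega}^2$ and $q^2=2\ks^\tau\omega^{1/2}\|\uuu\|_{\AAA,\GDiss}\RRR^{1/2}\|\eps(\uuu)\|_{\mu,\GDiss}$, delivers the two remaining contributions of \eqref{eq_korn_rhs} (one readily checks $2pq\geq 2\ks\omega^{-1}\|\fff\|_{\rho,\Omega}\sqrt C$, since $4\geq 2\sqrt 2$).

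The only real difficulty is bookkeeping of exponents of $\ks$, $\omega$, and $\alpha_N$ when setting the $p^2,q^2$ pairs in each Young's application, and checking that the resulting $2pq$ dominates the corresponding cross-term after the square-root step. No analytic ingredient beyond the weighted Korn inequality, the two G\aa rding identities, and elementary Young's/subadditivity arguments is required.
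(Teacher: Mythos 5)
Your proposal is correct and uses the same ingredients as the paper's proof (weighted Korn inequality, the two G\aa rding identities, subadditivity of the square root, and Young's inequality with balancing exponents $\theta,\tau$); the only difference is the order of operations. You take the square root of $A+B+C$ first and then apply Young's inequality term by term, whereas the paper first multiplies the squared bound by $\ks^2\omega^{-2}\|\fff\|_{\rho,\Omega}^2$, applies Young's inequality to obtain a sum of pure squares, multiplies by $4$, and only then invokes $\sqrt{a^2+b^2+c^2}\leq a+b+c$. Both routes yield exactly the five terms of \eqref{eq_korn_rhs}, and your careful checks that $2pq$ dominates the target cross-term (via $\sqrt{2}\geq 1$ and $4\geq 2\sqrt{2}$) are the right bookkeeping to verify.
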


\begin{proof}
We start by combining the weighted Korn inequality we established
in \eqref{eq_weighted_korn_inequality} together with the G\aa rding inequality
in \eqref{eq_garding_real} and the boundary estimate in \eqref{eq_garding_imag}
to show that
\begin{align*}
\mu_{\min} \|\grad \uuu\|_\Omega^2
&\leq
(1+\chi\ks^{-1})\|\fff\|_{\rho,\Omega}\|\uuu\|_{\rho,\Omega}
\\
&+
\omega^2 \|\uuu\|_{\rho,\Omega}^2
+
2\alpha_N^{-1/2}\ks^{-1/2}\omega^{1/2}\|\uuu\|_{\AAA,\GDiss}\RRR^{1/2}\|\eps(\uuu)\|_{\mu,\GDiss}.
\end{align*}
We can therefore write that
\begin{align*}
\ks^2 \omega^{-2}\|\fff\|_{\rho,\Omega}^2\mu_{\min} \|\grad \uuu\|_\Omega^2
&\leq
(\ks^2+\chi\ks)\omega^{-2}\|\fff\|_{\rho,\Omega}^2\|\fff\|_{\rho,\Omega}\|\uuu\|_{\rho,\Omega}
\\
&+
\ks^2 \|\fff\|_{\rho,\Omega}^2\|\uuu\|_{\rho,\Omega}^2
+
2\alpha_N^{-1/2}\ks^{3/2}\omega^{-2}\|\fff\|_{\rho,\Omega}^2\omega^{1/2}\|\uuu\|_{\AAA,\GDiss}\RRR^{1/2}\|\eps(\uuu)\|_{\mu,\GDiss}.
\end{align*}
The next step is to employ Young inequality twice to obtain
\begin{align*}
&\ks^2 \omega^{-2}\|\fff\|_{\rho,\Omega}^2\mu_{\min} \|\grad \uuu\|_\Omega^2
\\
&\leq
\frac{1}{2\ks^{2\theta}} (\ks^2+\chi\ks)^2\omega^{-4}\|\fff\|_{\rho,\Omega}^4
+
\frac{\ks^{2\theta}}{2} \|\fff\|_{\rho,\Omega}^2\|\uuu\|_{\rho,\Omega}^2
+
\ks^2 \|\fff\|_{\rho,\Omega}^2\|\uuu\|_{\rho,\Omega}^2
\\
&+
\frac{\ks^3\omega^{-4}}{\alpha_N\ks^{2\tau}} \|\fff\|_{\rho,\Omega}^4
+
\ks^{2\tau} \omega\|\uuu\|_{\AAA,\GDiss}^2\RRR\|\eps(\uuu)\|_{\mu,\GDiss}^2.
\\
&=
\frac{1}{2} (1+\chi\ks^{-1})^2\ks^{4-2\theta}\omega^{-4}\|\fff\|_{\rho,\Omega}^4
+
\frac{\ks^{2\theta}}{2} \|\fff\|_{\rho,\Omega}^2\|\uuu\|_{\rho,\Omega}^2
+
\ks^2 \|\fff\|_{\rho,\Omega}^2\|\uuu\|_{\rho,\Omega}^2
\\
&+
\frac{\ks^{3-2\tau}\omega^{-4}}{\alpha_N} \|\fff\|_{\rho,\Omega}^4
+
\ks^{2\tau} \omega\|\uuu\|_{\AAA,\GDiss}^2\RRR\|\eps(\uuu)\|_{\mu,\GDiss}^2.
\end{align*}
We regroup the terms and multiply by $4$, giving us finally
\begin{align*}
4\ks^2 \omega^{-2}\|\fff\|_{\rho,\Omega}^2\mu_{\min} \|\grad \uuu\|_\Omega^2
&\leq
\left (
2 (1+\chi\ks^{-1})^2\ks^{4-2\theta}
+
4\alpha_N^{-1}\ks^{3-2\tau}
\right )\omega^{-4}\|\fff\|_{\rho,\Omega}^4
\\
&+
(2\ks^{2\theta} + 4\ks^2)\|\fff\|_{\rho,\Omega}^2\|\uuu\|_{\rho,\Omega}^2
\\
&+
4\ks^{2\tau} \omega\|\uuu\|_{\AAA,\GDiss}^2\RRR\|\eps(\uuu)\|_{\mu,\GDiss}^2.
\end{align*}
Now, \eqref{eq_korn_rhs} follows by taking the square root of both sides,
since $\sqrt{a^2 + b^2 + c^2} \leq a + b + c$.
\end{proof}


\begin{lemma}[Quadratic inequality]
Let $x \geq 0$ with $ax^2 \leq c + bx$ for $a,b,c > 0$. Then, 
\begin{equation}
\label{eq_quadratic_inequality}
ax \leq \sqrt{ac} + b.
\end{equation}
\end{lemma}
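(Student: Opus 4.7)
The plan is to treat the hypothesis $ax^2 - bx - c \le 0$ as a quadratic inequality in $x$ and exploit that $a > 0$. Since the quadratic $p(t) \eq at^2 - bt - c$ satisfies $p(0) = -c < 0$, its unique positive root is
\begin{equation*}
t_+ \eq \frac{b + \sqrt{b^2 + 4ac}}{2a},
\end{equation*}
and the inequality $p(x) \le 0$ together with $x \ge 0$ forces $x \le t_+$, i.e.\
\begin{equation*}
ax \le \frac{b + \sqrt{b^2 + 4ac}}{2}.
\end{equation*}

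Next I would invoke the subadditivity of the square root on nonnegative terms, namely $\sqrt{b^2 + 4ac} \le \sqrt{b^2} + \sqrt{4ac} = b + 2\sqrt{ac}$ (which is justified by squaring: $(b + 2\sqrt{ac})^2 = b^2 + 4b\sqrt{ac} + 4ac \ge b^2 + 4ac$ since all quantities are nonnegative). Plugging this into the previous display yields
\begin{equation*}
ax \le \frac{b + b + 2\sqrt{ac}}{2} = b + \sqrt{ac},
\end{equation*}
which is exactly \eqref{eq_quadratic_inequality}.

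There is no real obstacle here; the only point worth care is the sign of $a$, which is positive by assumption and guarantees that the parabola opens upward so that $\{p \le 0\} \cap [0,\infty) = [0,t_+]$. An equivalent, slightly slicker presentation would be to complete the square: $a(x - b/(2a))^2 \le c + b^2/(4a)$ gives $|ax - b/2| \le \sqrt{ac + b^2/4}$, hence $ax \le b/2 + \sqrt{ac + b^2/4} \le b/2 + \sqrt{ac} + b/2$ by the same subadditivity argument.
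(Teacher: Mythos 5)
The paper states this lemma without proof, treating it as an elementary fact, so there is no authorial argument to compare against. Your derivation is correct and is the natural one: solving the upward-opening quadratic to get $ax \le \tfrac12\bigl(b + \sqrt{b^2+4ac}\bigr)$ and then applying subadditivity of the square root is exactly how one would justify this if pressed, and the completing-the-square variant you sketch at the end is equivalent.
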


\begin{theorem}[Stability estimate for simple Robin boundaris]
\label{theorem_simple_robin}
Under the assumpions listed in the beginning of this section,
there exists a unique solution $\uuu \in \BH^1_{\GDir}(\Omega)$
to \eqref{eq_helmholtz_weak}, and we have
\begin{align}
\nonumber
\frac{\gamma}{\MMM}\omega^2\|\uuu\|_{\rho,\Omega}
\leq
\Big \{
&
\left (\frac{d-2+\varepsilon}{2M} + \frac{\zeta}{2}\right )
+
\sqrt{\chi}\ks^{1/6}
+
\frac{\MMM}{4m} \ks^{1/3}
+
\\
\nonumber
&
\left ( \frac{5}{2} + \frac{M}{4m}\Crob \right ) \ks^{2/3}
+
\\
\label{eq_stability_estimate}
&
\left (
1 + \frac{1}{2\alpha_{\min}} + \frac{\MMM}{16\mmm} \Crob^2
\right )\ks
\Big \}\|\fff\|_{\rho,\Omega}.
\end{align}
\end{theorem}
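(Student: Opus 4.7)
The plan is to reduce the estimate to a scalar quadratic inequality for $x := \omega^2\|\uuu\|_{\rho,\Omega}$ of the form $ax^2 \leq c\|\fff\|_{\rho,\Omega}^2 + b\|\fff\|_{\rho,\Omega}\, x$, to which the final lemma (the quadratic inequality \eqref{eq_quadratic_inequality}) applies to yield $ax \leq (\sqrt{ac}+b)\|\fff\|_{\rho,\Omega}$, with $a = 2\gamma/M$. The free exponents $\theta,\tau$ introduced in the Young inequalities bound \eqref{eq_korn_rhs} will be tuned at the end to match the fractional $\ks$-powers announced in the estimate.

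First, I would substitute \eqref{eq_korn_rhs} into the Morawetz estimate \eqref{eq_morawetz_estimate} to replace the gradient term $2\ks\omega^{-1}\|\fff\|_{\rho,\Omega}\mu_{\min}^{1/2}\|\grad\uuu\|_\Omega$. This produces a pure $\omega^{-2}\|\fff\|_{\rho,\Omega}^2$ contribution (feeding $c$), a $\|\fff\|_{\rho,\Omega}\|\uuu\|_{\rho,\Omega}$ contribution (feeding $b$), and one residual cross term $2\ks^\tau\omega^{1/2}\|\uuu\|_{\AAA,\GDiss}\RRR^{1/2}\|\eps(\uuu)\|_{\mu,\GDiss}$. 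Next, I would apply Young's inequality to this residual cross term and to the surviving Morawetz cross term $\Crob\ks^{1/2}\omega^{1/2}\|\uuu\|_{\AAA,\GDiss}\RRR^{1/2}\|\eps(\uuu)\|_{\mu,\GDiss}$, choosing the weights so that the sum of the $\RRR\|\eps(\uuu)\|_{\mu,\GDiss}^2$ contributions exactly saturates the good boundary term $2(m/M)\RRR\|\eps(\uuu)\|_{\mu,\GDiss}^2$ on the left-hand side. What remains are dual terms of type $\omega\|\uuu\|_{\AAA,\GDiss}^2$, which are bounded by $\|\fff\|_{\rho,\Omega}\|\uuu\|_{\rho,\Omega}$ via the imaginary G\aa rding identity \eqref{eq_garding_imag}; these add to the coefficient $b$. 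After multiplying by $\omega^2$, the resulting inequality has the desired quadratic form, and the quadratic lemma concludes.

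The optimization of $\theta$ and $\tau$ is the combinatorial heart of the proof. The balance of the $c$-term $\ks^{2-\theta}$ (producing $\ks^{1-\theta/2}$ via $\sqrt{ac}$) against the direct $b$-term $\ks^\theta$ selects $\theta=2/3$, giving simultaneously $\ks^{2/3}$ from both sources; the $\chi\ks^{-1}$ correction in $c$ then generates the $\sqrt{\chi}\ks^{1/6}$ coefficient through the square root. A similar balance between the $c$-term $\ks^{3/2-\tau}$ and the Young-absorption contribution $\ks^{2\tau}$ (which feeds $b$ after G\aa rding) produces the $\ks^{1/3}$ coefficient $M/(4m)$. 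The Morawetz cross term contributes $M\Crob^2\ks/(16m)$ directly to $b$ (accounting for the $\Crob^2\ks$ part of the final bound) and yields an $M\Crob\ks^{2/3}/(4m)$ coefficient through the $\sqrt{ac}$ interaction. Existence and uniqueness of the weak solution follow from the standard Fredholm framework: \eqref{eq_helmholtz_weak} is a compact perturbation of a coercive problem (G\aa rding inequality), and uniqueness is a direct consequence of the derived estimate applied to $\fff=\bzero$.

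The main technical difficulty is bookkeeping rather than conceptual: every Young weight must be selected so that each resulting piece lands in the correct slot ($b$ or $c$) with the correct explicit constant, since the theorem demands a fully computable bound. Keeping every factor explicit while simultaneously producing the four distinct powers $\ks^{1/6}$, $\ks^{1/3}$, $\ks^{2/3}$, $\ks$ through a single quadratic-inequality step is the delicate part of the argument.
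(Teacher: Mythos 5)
Your proposal correctly mirrors the paper's argument: plug \eqref{eq_korn_rhs} into \eqref{eq_morawetz_estimate}, absorb the $\RRR\|\eps(\uuu)\|_{\mu,\GDiss}^2$ cross terms by Young's inequality, bound $\omega\|\uuu\|_{\AAA,\GDiss}^2$ via \eqref{eq_garding_imag}, apply the quadratic inequality \eqref{eq_quadratic_inequality}, and then set $\theta_\star = 2/3$ and $\tau_\star = 1/6$ to balance the $\ks$-powers. One small bookkeeping slip in your last paragraph: the $M\Crob\ks^{2/3}/(4m)$ coefficient actually arises from the cross term $4\Crob\ks^{1/2+\tau}$ in the expansion of $(\Crob\ks^{1/2}+2\ks^\tau)^2$ and therefore lands in the coefficient $b$ (not in $\sqrt{ac}$), and $\tau$ is fixed by equating the exponents $3/4-\tau/2$ (coming from $\sqrt{ac}$ applied to the $\ks^{3/2-\tau}$ contribution) and $1/2+\tau$ (the Young cross term), with the $\ks^{2\tau}=\ks^{1/3}$ term then falling where it falls rather than participating in the balance.
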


\begin{proof}
We start by plugging \eqref{eq_korn_rhs} into \eqref{eq_morawetz_estimate}, which provides
the estimate
\begin{multline}
\label{tmp_morawetz_estimate}
2\frac{\gamma}{\MMM}\omega^2\|\uuu\|_{\rho,\Omega}^2
+
2\frac{\mmm}{\MMM}\RRR\|\eps(\uuu)\|_{\mu,\GDiss}^2
\leq
\left (
\sqrt{2} (1+\chi\ks^{-1})\ks^{2-\theta} + 2\alpha_N^{-1/2}\ks^{3/2-\tau} \right )
\omega^{-2}\|\fff\|_{\rho,\Omega}^2
\\
\left \{
\left (\frac{d-2+\varepsilon}{M} + \zeta\right )
+
\sqrt{2}\ks^{\theta}
+
\left (\frac{1}{\alpha_{\min}}+2\right )\ks
\right \}
\|\fff\|_{\rho,\Omega}\|\uuu\|_{\rho,\Omega}
\\
+
(\Crob \ks^{1/2} + 2\ks^\tau)
\omega^{1/2}\|\uuu\|_{\AAA,\GDiss}\RRR^{1/2}\|\eps(\uuu)\|_{\mu,\GDiss}.
\end{multline}
The next step is to apply Young inequality in the last term, namely
\begin{multline}
\label{tmp_young_inequality}
(\Crob \ks^{1/2} + 2\ks^\tau)
\omega^{1/2}\|\uuu\|_{\AAA,\GDiss}\RRR^{1/2}\|\eps(\uuu)\|_{\mu,\GDiss}
\\
=
2\frac{1}{2\sqrt{2}}
(\Crob \ks^{1/2} + 2\ks^\tau)
\omega^{1/2}\|\uuu\|_{\AAA,\GDiss}\RRR^{1/2}\sqrt{2}\|\eps(\uuu)\|_{\mu,\GDiss}
\\
\leq
\frac{1}{8}\frac{\MMM}{\mmm}
(\Crob \ks^{1/2} + 2\ks^\tau)^2
\omega\|\uuu\|_{\AAA,\GDiss}^2
+
2\frac{\mmm}{\MMM}\RRR\|\eps(\uuu)\|_{\mu,\GDiss}^2.
\end{multline}
We then observe that the last terms in the left-hand side of
\eqref{tmp_morawetz_estimate} and in the right-hand side of
\eqref{tmp_young_inequality} coincide, so that
\begin{multline*}
2\frac{\gamma}{\MMM}\omega^2\|\uuu\|_{\rho,\Omega}^2
\leq
\left (\sqrt{2} (1+\chi\ks^{-1})\ks^{2-\theta} + 2\ks^{3/2-\tau} \right )
\omega^{-2}\|\fff\|_{\rho,\Omega}^2
\\
+
\left \{
\left (\frac{d-2+\varepsilon}{M} + \zeta\right )
+
\sqrt{2}\ks^\theta
+
\left (\frac{1}{\alpha_{\min}}+2\right )\ks
\right \}
\|\fff\|_{\rho,\Omega}\|\uuu\|_{\rho,\Omega}
\\
+
\frac{1}{8}\frac{\MMM}{\mmm}
(\Crob \ks^{1/2} + 2\ks^\tau)^2
\omega\|\uuu\|_{\AAA,\GDiss}^2.
\end{multline*}
We may then employ \eqref{eq_garding_imag}, leading to
\begin{multline*}
2\frac{\gamma}{\MMM}\omega^2\|\uuu\|_{\rho,\Omega}^2
\leq
\left (\sqrt{2} (1+\chi\ks^{-1})\ks^{2-\theta} + 2\ks^{3/2-\tau} \right )
\omega^{-2}\|\fff\|_{\rho,\Omega}^2
+
\\
\left \{
\left (\frac{d-2+\varepsilon}{M} + \zeta\right )
+
\sqrt{2}\ks^\theta
+
\left (\frac{1}{\alpha_{\min}}+2\right )\ks
+
\frac{1}{8}\frac{\MMM}{\mmm}
(\Crob \ks^{1/2} + 2\ks^\tau)^2
\right \}
\|\fff\|_{\rho,\Omega}\|\uuu\|_{\rho,\Omega},
\end{multline*}
and \eqref{eq_quadratic_inequality} then reveals that
\begin{multline}
\label{tmp_stability_estimate}
2\frac{\gamma}{\MMM}\omega^2\|\uuu\|_{\rho,\Omega}
\leq
\sqrt{
2\frac{\gamma}{M}
\left (\sqrt{2} (1+\chi\ks^{-1})\ks^{2-\theta} + 2\ks^{3/2-\tau} \right )}
\|\fff\|_{\rho,\Omega} +
\\
\left \{
\left (\frac{d-2+\varepsilon}{M} + \zeta\right )
+
\sqrt{2}\ks^{\theta}
+
\left (\frac{1}{\alpha_{\min}}+2\right )\ks
+
\frac{1}{8}\frac{\MMM}{\mmm}
(\Crob \ks^{1/2} + 2\ks^{\tau})^2
\right \}
\|\fff\|_{\rho,\Omega}.
\end{multline}

The remainder of the proof consists in simplifying the right-hand side
of \eqref{tmp_stability_estimate} and selecting the values of $\theta$
and $\tau$ so as to optimize the resulting powers of $\ks$. We start
with two simple estimates, namely
\begin{equation*}
\sqrt{2\frac{\gamma}{M}
\left (\sqrt{2} (1+\chi\ks^{-1})\ks^{2-\theta} + 2\ks^{3/2-\tau} \right )}
\leq
\sqrt{2\frac{\gamma}{M}}
\left (2^{1/4} (1+\chi\ks^{-1})^{1/2}\ks^{1-\theta/2} + \sqrt{2}\ks^{3/4-\tau/2} \right )
\end{equation*}
and
\begin{equation*}
(\Crob \ks^{1/2} + 2\ks^{\tau})^2
=
\Crob^2 \ks + 4\Crob \ks^{1/2+\tau} + 4\ks^{2\tau}.
\end{equation*}

At that point, to fix the values of $\theta$ and $\tau$, we equate the two
higher powers of $\ks$ respectively involving $\theta$ and $\tau$. More precisely,
we pick $\theta$ and $\tau$ such that $1-\theta/2 = \theta$ and $1/2+\tau = 3/4-\tau/2$.
The desired values are then $\theta_\star \eq 2/3$ and $\tau_\star \eq 1/6$, and we have
$\theta_\star = 1-\theta_\star/2 = 2/3$, $3/4-\tau_\star/2 = 1/2+\tau_\star = 2/3$,
$2\tau_\star = 1/3$.

This choice provides the estimate
\begin{align*}
2\frac{\gamma}{\MMM}\omega^2\|\uuu\|_{\rho,\Omega}
\leq
\Big \{
&
\left (\frac{d-2+\varepsilon}{M} + \zeta\right ) + \frac{\MMM}{2m} \ks^{1/3} +
\\
&
\left [
\sqrt{2}+\frac{\MMM}{2m}\Crob +
\sqrt{2\frac{\gamma}{M}}
\left (2^{1/4} (1+\chi\ks^{-1})^{1/2}+ \sqrt{2} \right )
\right ]
\ks^{2/3} +
\\
&
\left (
2 + \frac{1}{\alpha_{\min}} + \frac{\MMM}{8\mmm} \Crob^2
\right )\ks
\Big \}\|\fff\|_{\rho,\Omega}.
\end{align*}

The remainder of the proof consists in simplifying the $\ks^{2/3}$ term.
Specifically, recalling that $\gamma/M \leq 1$ and $(1+x)^{1/2} \leq 1+x^{1/2}$,
the estimate follows from
\begin{align*}
&
\sqrt{2}+\frac{\MMM}{2m}\Crob
+
\sqrt{2\frac{\gamma}{M}}
\left (2^{1/4} (1+\chi\ks^{-1})^{1/2}+ \sqrt{2} \right )
\\
&\leq
\sqrt{2}+\frac{\MMM}{2m}\Crob
+
\sqrt{2} \left (2^{1/4} (1+\sqrt{\chi}\ks^{-1/2}) + \sqrt{2} \right )
\\
&\leq
2\sqrt{2}+\frac{\MMM}{2m}\Crob + 2 (1+\sqrt{\chi}\ks^{-1/2})
\\
&\leq
5+\frac{\MMM}{2m}\Crob + 2\sqrt{\chi}\ks^{-1/2},
\end{align*}
where we used the fact that $2\sqrt{2} \leq 3$. This leads us to
\begin{align*}
2\frac{\gamma}{\MMM}\omega^2\|\uuu\|_{\rho,\Omega}
\leq
\Big \{
&
\left (\frac{d-2+\varepsilon}{M} + \zeta\right )
+
2\sqrt{\chi}\ks^{1/6}
+
\frac{\MMM}{2m} \ks^{1/3}
+
\\
&
\left ( 5 + \frac{M}{2m}\Crob \right ) \ks^{2/3}
+
\\
&
\left (
2 + \frac{1}{\alpha_{\min}} + \frac{\MMM}{8\mmm} \Crob^2
\right )\ks
\Big \}\|\fff\|_{\rho,\Omega},
\end{align*}
and \eqref{eq_stability_estimate} follows after dividing by two.
\end{proof}

The strength of Theorem \ref{theorem_simple_robin} is that the frequency
scaling in the stability estimate is sharp. A drawback, however, is that
the constraint that $\bh \times \bn = \bzero$ on $\GDiss$ forces the boundary
to have a rather simple shape. Hereafeter, we allow for more generality in the
shape of $\GDiss$, but at the price of less accurate stability bounds.

\section{Suboptimal stability estimates for complex Robin boundaries}
\label{section_general_robin}

We now revisit the arguments of \cite{cummings_feng_2006a} allowing for
more general shapes for $\GDiss$, but at the price of less precise
stability results. In this setting, the estimates of Section
\ref{section_new} are not available, and the challenge is to control
the boundary term $\LR_{\bh,\GDiss}(\uuu)$. Following \cite[Lemma 10]{cummings_feng_2006a},
the key idea is to control $\|\grad \uuu\|_{\GDiss}$ by the $\BH^2(\Omega)$ norm of $\uuu$, which
is itself controlled by $\|\fff\|_{\rho,\Omega}$ and the $\BH^1(\Omega)$ norm of $\uuu$ due to
elliptic regularity. Here, for the sake of generality, we state the key regularity result
needed as an assumption. We will show in Appendix \ref{appendix_elliptic_regularity} that
this assumption does hold true with a constant $\Creg$ independent of $\omega$
if $\GDiss$ is smooth and the coefficients are smooth in a neighborhood
of $\GDiss$. We will also show that $\Creg$ is independent of $\lambda/\mu$,
when both $\GDiss$ and $\GDir$ are smooth and $\lambda$ and $\mu$ are constant.

\begin{assumption}[Elliptic regularity]
\label{assumption_elliptic_regularity}
There exists a constant $\Creg$
such that, if $\bw \in \BH^1_{\GDir}(\Omega)$ solves
\begin{equation*}
(\sig(\bw),\eps(\bv))_\Omega
=
\omega^2 (\rho\bg,\bv)_\Omega
+
\omega (\AAA\bk,\bv)_{\GDiss}
\end{equation*}
for all $\bv \in \BH^1_{\GDir}(\Omega)$ with
$\bg \in \BL^2(\Omega)$ and $\bk \in \BH^1(\Omega)$,
then $\grad \bw \in \BL^2(\GDiss)$ with
\begin{equation*}
\ell \|\grad \bw\|_{\mu,\GDiss}^2
\leq
\Creg
(1+\ks)
\left \{
\omega^2\|\bg\|_{\rho,\Omega}
+
\omega^2\|\bk\|_{\rho,\Omega} + \|\eps(\bk)\|_{\mu,\Omega}^2
+
\|\eps(\bw)\|_{\mu,\Omega}^2
\right \}
\end{equation*}
\end{assumption}

In the remainder of this Section, we assume that $\GDiss = \partial \CO$ and
$\GDir = \partial D$ with $D \subset\!\subset \CO$ so that $\Omega = \CO \setminus D$.
Recall that $\hGDiss \eq (1/\RRR)\GDiss$ is the rescale absorbing boundary.
Then, we have the following Korn inequality:

\begin{proposition}[Korn inequality]
For all $\bv \in \BH^1_{\GDir}(\Omega)$, we have
\begin{equation}
\label{eq_korn_general}
\|\tgrad \bv\|_{\mu,\Omega}^2
\leq
\LK^2
(1+\ks^{-1})^2
\left (
\omega^2\|\bv\|_{\rho,\Omega}^2 + \|\eps(\bv)\|_{\mu,\Omega}^2
\right )
\qquad
\forall \bv \in \BH^1_{\GDir}(\Omega),
\end{equation}
where the constant $\LK$ only depends on $\hGDiss$,
$\rho_{\max}/\rho_{\min}$, and $\mu_{\max}/\mu_{\min}$.
\end{proposition}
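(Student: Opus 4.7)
The plan is to reduce the mixed-boundary Korn inequality on $\Omega$ to the classical second Korn inequality on the fixed obstacle-free domain $\hCO$. The key observation is that any $\bv \in \BH^1_{\GDir}(\Omega)$ vanishes on $\GDir = \partial D$, so extending $\bv$ by $\bo$ inside $D$ yields $\tilde{\bv} \in \BH^1(\CO)$, because the traces on $\partial D$ agree. This extension is precisely what is needed to eliminate the dependence of the Korn constant on the obstacle $D$.

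After rescaling by setting $\hat{\bv}(\hat{\bx}) \eq \tilde{\bv}(\RRR\hat{\bx}) \in \BH^1(\hCO)$, I would invoke the classical second Korn inequality on the fixed Lipschitz domain $\hCO$,
\begin{equation*}
\|\tgrad \hat{\bv}\|_{\hCO}^2
\leq
\widehat{\LC}\left(\|\hat{\bv}\|_{\hCO}^2 + \|\eps(\hat{\bv})\|_{\hCO}^2\right),
\end{equation*}
where $\widehat{\LC}$ depends only on the shape of $\hCO$, i.e.\ only on $\hGDiss = \partial\hCO$. Since $\hat{\bv}$ vanishes in $\hCO \setminus \hOmega$, the three integrals in fact reduce to integrals over $\hOmega$. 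Scaling back to $\Omega$ with the usual factors $\RRR^d$ (Jacobian) and $\RRR^{-1}$ (each derivative) yields the unweighted estimate
\begin{equation*}
\|\tgrad \bv\|_\Omega^2
\leq
\widehat{\LC}\left(\RRR^{-2}\|\bv\|_\Omega^2 + \|\eps(\bv)\|_\Omega^2\right).
\end{equation*}

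The last step is to insert the material weights. Using $\rho_{\min}\|\bv\|_\Omega^2 \leq \|\bv\|_{\rho,\Omega}^2$, $\mu_{\min}\|\eps(\bv)\|_\Omega^2 \leq \|\eps(\bv)\|_{\mu,\Omega}^2$ and $\|\tgrad \bv\|_{\mu,\Omega}^2 \leq \mu_{\max}\|\tgrad \bv\|_\Omega^2$, together with the identity $\omega^2 \ks^{-2} = \vels^2/\RRR^2 = \mu_{\min}/(\rho_{\max}\RRR^2)$ coming from \eqref{eq_definition_ks} and \eqref{eq_definition_vels}, I would obtain
\begin{equation*}
\|\tgrad \bv\|_{\mu,\Omega}^2
\leq
\widehat{\LC}\,\frac{\mu_{\max}}{\mu_{\min}}\,\frac{\rho_{\max}}{\rho_{\min}}
\left(\ks^{-2}\omega^2\|\bv\|_{\rho,\Omega}^2 + \|\eps(\bv)\|_{\mu,\Omega}^2\right).
\end{equation*}
Since $\ks^{-2} \leq (1+\ks^{-1})^2$ and $1 \leq (1+\ks^{-1})^2$, the inequality \eqref{eq_korn_general} follows with $\LK^2 \eq \widehat{\LC}\,(\mu_{\max}/\mu_{\min})\,(\rho_{\max}/\rho_{\min})$, whose dependences are exactly those claimed.

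The only subtle point is the extension step: a direct application of Korn on $\hOmega$ would produce a constant depending on the full geometry (including the obstacle $D$), whereas extending by zero across $\GDir$ allows one to invoke Korn on the obstacle-free reference domain $\hCO$, eliminating that dependence. Everything else is careful bookkeeping of the scaling and of the weight factors.
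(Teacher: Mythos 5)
Your proof is correct and follows essentially the same route as the paper's: extend by zero across $\GDir$ into the obstacle to land in $\BH^1(\CO)$, rescale to the unit-size domain $\hCO$, invoke the standard second Korn inequality there (whose constant depends only on $\hGDiss$), scale back, and then insert the weights $\rho$, $\mu$ using the extremal bounds and the identity $\omega^2\ks^{-2}=\mu_{\min}/(\rho_{\max}\RRR^2)$. The paper leaves the final weight-insertion step implicit ("since $\ell^{-1}\le\omega/(\vels\ks)$"); you spell it out, which is a useful clarification but not a different argument.
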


\begin{proof}
We first observe that if $\bv \in \BH^1_{\GDir}(\Omega)$, its extension
by zero in $D$, $\tbv$, belongs to $\BH^1(\CO)$.
Then, we employ a scaling argument, and introduce the function
$\hbv(\hbx)=\tbv(\RRR\hbx)$ for $\hbx \in \hCO \eq (1/\RRR)\CO$.

It is clear that $\hbv \in H^1(\hCO)$, and thus, we can apply the
standard Korn's inequality in $\hCO$, see, e.g. \cite[Theorem 11.2.16]{brenner_scott_2008a}
for $d=2$ and \cite[(1.2.35]{ciarlet_2002a} for $d=3$, to find
\[
\|\tgrad \hat \bv\|_{\hCO}^2
\leq
(\widehat{\LK})^2
\left (\|\hat\bv\|_{\hCO}^2 + \|\eps(\hat\bv)\|_{\hCO}^2\right).
\]
for some positive constant $\widehat{\LK}$ depending only on $\hCO$,
and hence on $\hGDiss$. Due to the definition of $\hbv$, we also have
\begin{equation*}
\|\tgrad \hbv\|_{\hCO}^2 = \RRR^{2-d} \|\tgrad \tbv\|_{\CO}^2 = \RRR^{2-d} \|\tgrad \bv\|_{\Omega}^2
\qquad
\|\hbv\|_{\hCO}^2 = \RRR^{-d} \|\tbv\|_{\CO}^2 = \RRR^{-d} \|\bv\|_{\Omega}^2,
\end{equation*}
leading to
\begin{equation*}
\|\tgrad \bv\|_{\Omega}^2
\leq
(\widehat{\LK})^2
\left (\ell^{-2}\|\bv\|_{\Omega}^2 + \|\eps(\bv)\|_{\Omega}^2\right ).
\end{equation*}
This proves \eqref{eq_korn_general} since $\ell^{-1}\leq \omega/(\vels \ks)$.
\end{proof}

In practice, the constants $\Creg$ and $\LK$ are hard to estimate explicitly.
Besides, because the estimates obtained in this section seem to be
suboptimal (the frequency scaling is quadratic instead of linear),
it is unlikely that a fully explicit bound expressed in terms of
$\Creg$ will be useful. As a result, in the remainder of this section,
we let $C$ denote a generic constant that only depends on $\Creg$, $\LK$,
$\alpha_{\min}$ and $\alpha_{\max}$, $M$ and $\gamma$. If $\xi$ is a real
number, we use the notation $C_\xi$ to indicate that this constant may additionally
depend on $\xi$.

Notice that then, the estimates derived in this section are only meaningful
if $\Creg$ is independent of $\omega$ and $\ell$. Using standard elliptic
regularity results, we will show in Appendix \ref{appendix_elliptic_regularity}
that this is indeed true, at least under reasonable assumptions. We will also
show that $\Creg$ is independent of $\lambda_{\max}/\mu_{\min}$, at least when
$\lambda$ and $\mu$ are constant.

\begin{lemma}[Dissipative boundary term]
For all $\xi \in (0,1)$, we have
\label{lemma_coarse_bound_RGDiss}
\begin{equation}
\label{eq_robin_suboptimal}
|\LR_{\bh,\GDiss}(\uuu)|
\leq
C_\xi
(1+\ks^2) \omega\|\uuu\|_{\AAA,\GDiss}^2
+
\omega^{-2}\|\fff\|_{\rho,\Omega}^2
+
\xi \omega^2\|\uuu\|_{\rho,\Omega}^2.
\end{equation}
\end{lemma}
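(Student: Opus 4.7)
The plan is to use the Robin boundary condition to rewrite $\LR_{\bh,\GDiss}(\uuu)$ in terms of $\AAA \uuu$, then to apply Young's inequality with a small parameter $\delta$ to split the boundary contribution into a controllable piece and a piece involving $\RRR^2 \|\grad \uuu\|_{\GDiss}^2$, which will then be tamed by Assumption \ref{assumption_elliptic_regularity}.

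First, since $\uuu$ solves \eqref{eq_helmholtz_weak} with the assumed regularity, one has $\sig(\uuu)\bn = i\omega \AAA\uuu$ on $\GDiss$, so that
\begin{equation*}
\LR_{\bh,\GDiss}(\uuu) = -2\omega \Im \int_{\GDiss} \AAA\uuu \cdot (\bh\cdot\grad)\overline{\uuu}.
\end{equation*}
Using $|\bh| \leq M\RRR$ pointwise on $\GDiss$ and the elementary bound $\|\AAA\uuu\|_{\GDiss} \leq \sqrt{a_{\max}} \|\uuu\|_{\AAA,\GDiss}$, Cauchy--Schwarz yields
\begin{equation*}
|\LR_{\bh,\GDiss}(\uuu)| \leq C \, \omega \|\uuu\|_{\AAA,\GDiss} \, \RRR \|\grad \uuu\|_{\GDiss}.
\end{equation*}
Next, I would invoke Young's inequality with a parameter $\delta>0$ to be chosen, giving
\begin{equation*}
|\LR_{\bh,\GDiss}(\uuu)| \leq \frac{C\omega}{\delta}\|\uuu\|_{\AAA,\GDiss}^2 + C\,\delta\, \omega \RRR^2 \|\grad \uuu\|_{\GDiss}^2.
\end{equation*}

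The heart of the proof is controlling the last term. I would apply Assumption \ref{assumption_elliptic_regularity} with $\bg = \uuu + \omega^{-2}\fff$ and $\bk = i\uuu$, noting that by \eqref{eq_helmholtz_weak} the function $\bw = \uuu$ satisfies the corresponding elastostatic problem. Combining this with the G\aa rding identity \eqref{eq_garding_real} to absorb the $\|\eps(\uuu)\|_{\mu,\Omega}^2$ term (via $\|\eps(\uuu)\|_{\mu,\Omega}^2 \leq \tfrac{1}{2}\omega^2\|\uuu\|_{\rho,\Omega}^2 + \tfrac{1}{2}\|\fff\|_{\rho,\Omega}\|\uuu\|_{\rho,\Omega} \leq C(\omega^2\|\uuu\|_{\rho,\Omega}^2 + \omega^{-2}\|\fff\|_{\rho,\Omega}^2)$ by Young), I obtain
\begin{equation*}
\RRR \|\grad \uuu\|_{\mu,\GDiss}^2 \leq C(1+\ks)\bigl(\omega^2\|\uuu\|_{\rho,\Omega}^2 + \omega^{-2}\|\fff\|_{\rho,\Omega}^2\bigr).
\end{equation*}
Then a direct scaling computation using $\omega \RRR/\mu_{\min} \leq C \ks$ (which follows from the definitions of $\ks$ and $\vels$) converts the factor $\omega \RRR^2 \|\grad\uuu\|_{\GDiss}^2$ into
\begin{equation*}
\omega \RRR^2 \|\grad \uuu\|_{\GDiss}^2 \leq C \ks (1+\ks) \bigl(\omega^2\|\uuu\|_{\rho,\Omega}^2 + \omega^{-2}\|\fff\|_{\rho,\Omega}^2\bigr).
\end{equation*}

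Plugging this back and choosing $\delta \eq \xi / \bigl(C\ks(1+\ks)\bigr)$ makes the coefficient of $\omega^2\|\uuu\|_{\rho,\Omega}^2$ equal to $\xi$, and the coefficient of $\omega^{-2}\|\fff\|_{\rho,\Omega}^2$ bounded by $\xi<1$. The first term becomes $C\omega\delta^{-1}\|\uuu\|_{\AAA,\GDiss}^2 \leq C_\xi (1+\ks^2)\omega\|\uuu\|_{\AAA,\GDiss}^2$, which is exactly the desired shape. The main obstacle, I expect, is being meticulous about the dimensional constants when invoking the elliptic-regularity bound on a surface integral and carrying out the $\ks$-scaling correctly so that the prefactor $(1+\ks^2)$ comes out sharp with respect to the chosen Young parameter; the rest is routine.
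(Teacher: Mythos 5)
Your proof is correct and follows essentially the same route as the paper: rewrite $\LR_{\bh,\GDiss}(\uuu)$ via the Robin condition $\sig(\uuu)\bn = i\omega\AAA\uuu$, split with Young's inequality using a free parameter, control $\ell\|\grad\uuu\|_{\mu,\GDiss}^2$ by invoking Assumption \ref{assumption_elliptic_regularity} with $\bg=\uuu+\omega^{-2}\fff$, $\bk=i\uuu$, and absorb $\|\eps(\uuu)\|_{\mu,\Omega}^2$ through the G\aa rding identity \eqref{eq_garding_real}. One small caution: the step ``$\omega\RRR/\mu_{\min}\leq C\ks$'' is not dimensionally consistent on its own (in fact $\omega\RRR/\mu_{\min}=\ks/\sqrt{\rho_{\max}\mu_{\min}}$, so the $C$ would need units), but this is harmless because the compensating factor $\sqrt{a_{\max}}=\alpha_{\max}^{1/2}(\rho_{\max}\mu_{\min})^{1/4}$ already hidden in your Cauchy--Schwarz constant cancels it exactly; working with $\|\grad\uuu\|_{\mu,\GDiss}$ throughout, as the paper does, avoids this bookkeeping and makes the adimensional nature of the final $C_\xi$ transparent.
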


\begin{proof}
Recall that since $\sig(\uuu)\bn = i\omega\AAA\uuu$ on $\GDiss$, we have
\begin{equation*}
\LR_{\bh,\GDiss}(\uuu)
\eq
2 \Re \int_{\GDiss}
(\sig(\uuu)\bn) \cdot ((\bh \cdot \grad) \overline{\uuu}).
=
2 \Re i\omega\int_{\GDiss}
\AAA\uuu \cdot ((\bh \cdot \grad) \overline{\uuu}).
\end{equation*}
Recalling that $\|{\cdot}\|_{\AAA,\GDiss} \leq C \vels^{-1/2}\|{\cdot}\|_{\mu,\GDiss}$,
and using that \eqref{eq_upper_bounds_h} implies $\|\bh\|_{\BL^\infty(\Omega)}\leq \MMM \RRR$,
we have
\begin{align*}
|\LR_{\bh,\GDiss}(\uuu)|
&\leq
C\omega^{1/2}\|\uuu\|_{\AAA,\GDiss}
(\omega/\vels)^{1/2}
\|(\bh\cdot\grad) \uuu\|_{\mu,\GDiss}
\\
&\leq
C\ks^{1/2}  \omega^{1/2}
\|\uuu\|_{\AAA,\GDiss}
\ell^{1/2}
\|\grad \uuu\|_{\mu,\GDiss}.
\end{align*}
We conclude the proof with Young's inequality
and Assumption \ref{assumption_elliptic_regularity}
as follows. For all $\tau \in (0,3)$, we have
\begin{align*}
|\LR_{\bh,\GDiss}(\uuu)|
&\leq
C_\tau (1+\ks) \ks \omega\|\uuu\|_{\AAA,\GDiss}^2
+
\tau \frac{\ell}{1+\ks}\|\grad \uuu\|_{\mu,\GDiss}^2
\\
&\leq
C_\tau (1+\ks) \ks \omega\|\uuu\|_{\AAA,\GDiss}^2
+
\tau
\left (
\omega^{-2}\|\fff\|_{\rho,\Omega}^2
+
\omega^2\|\uuu\|_{\rho,\Omega}^2
+
\|\eps(\uuu)\|_{\mu,\Omega}^2
\right ).
\end{align*}
Since, the G\aa rding identity in \eqref{eq_garding_real} gives
\begin{equation*}
\|\eps(\uuu)\|_{\mu,\Omega}^2
=
\Re(\rho\fff,\uuu)_\Omega + \omega^2\|\uuu\|_{\rho,\Omega}^2
\leq
\omega^{-2} \|\fff\|_{\rho,\Omega} + 2\omega^2\|\uuu\|_{\rho,\Omega}^2,
\end{equation*}
we have
\begin{align*}
|\LR_{\bh,\GDiss}^2(\uuu)|
&\leq
C_\tau (1+\ks^2) \omega\|\uuu\|_{\AAA,\GDiss}^2
+
\tau
\left (
2\omega^{-2}\|\fff\|_{\rho,\Omega}^2
+
3\omega^2\|\uuu\|_{\rho,\Omega}^2
\right )
\\
&\leq
C_\xi
(1+\ks^2) \omega\|\uuu\|_{\AAA,\GDiss}^2
+
\omega^{-2}\|\fff\|_{\rho,\Omega}^2
+
\xi \omega^2\|\uuu\|_{\rho,\Omega}^2
\end{align*}
upon taking $\xi = \tau/3$.
\end{proof}

\begin{theorem}[Stability estimate for general Robin boundary]
\label{theorem_general_robin}
We have
\begin{equation}
\label{eq_general_robin}
\omega^2\|\uuu\|_{\rho,\Omega}
\leq
C (1 + \ks^2) \|\fff\|_{\rho,\Omega}.
\end{equation}
\end{theorem}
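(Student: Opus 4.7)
The plan is to combine the Morawetz identity \eqref{eq_morawetz_identity} with the coarse boundary estimate of Lemma~\ref{lemma_coarse_bound_RGDiss}, the Korn inequality \eqref{eq_korn_general}, and the two G\aa rding identities, in the same spirit as the proof of Theorem~\ref{theorem_simple_robin} but with the crude $\LR_{\bh,\GDiss}$ bound replacing the optimal one of Section~\ref{section_new}. With a multiplier $\bh$ satisfying \eqref{eq_assumptions_multiplier} and \eqref{eq_assumption_sign_boundary}, the left-hand side of \eqref{eq_morawetz_identity} is bounded from below by $(d-\eta)\omega^2\|\uuu\|_{\rho,\Omega}^2$ (the contribution $\LB_{\bh,\GDir}(\uuu)$ is nonnegative thanks to the Dirichlet boundary lemma), so it remains to bound each of the four right-hand side terms by expressions of the form $\omega^2\|\uuu\|_{\rho,\Omega}^2$ (absorbable into the left side) and $(1+\ks^2)\|\fff\|_{\rho,\Omega}\|\uuu\|_{\rho,\Omega}$.

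The volume contribution $\LR_{\bh,\Omega}(\uuu)$ is bounded through \eqref{eq_assumption_LR_Omega} and the real G\aa rding identity \eqref{eq_garding_real}. The boundary mass term $\omega^2\int_{\GDiss}(\bh\cdot\bn)\rho|\uuu|^2$ is estimated by $C\ks\,\omega\|\uuu\|_{\AAA,\GDiss}^2$ after using $\ell/\vels = \ks/\omega$ together with the pointwise comparison between $\rho$ and $\AAA$, and then by $C\ks\|\fff\|_{\rho,\Omega}\|\uuu\|_{\rho,\Omega}$ via the imaginary G\aa rding identity \eqref{eq_garding_imag}. The dissipative boundary term is bounded via Lemma~\ref{lemma_coarse_bound_RGDiss} with a free parameter $\xi\in(0,1)$: applying \eqref{eq_garding_imag} to turn $\omega\|\uuu\|_{\AAA,\GDiss}^2$ into $\|\fff\|_{\rho,\Omega}\|\uuu\|_{\rho,\Omega}$ yields a contribution $C_\xi(1+\ks^2)\|\fff\|_{\rho,\Omega}\|\uuu\|_{\rho,\Omega} + \omega^{-2}\|\fff\|_{\rho,\Omega}^2 + \xi\omega^2\|\uuu\|_{\rho,\Omega}^2$. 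Finally, the work term $2\Re(\rho\fff,(\bh\cdot\grad)\uuu)_\Omega$ is bounded by Cauchy--Schwarz and $\ell\sqrt{\rho_{\max}/\mu_{\min}}=\ks/\omega$ as $CM\ks\omega^{-1}\|\fff\|_{\rho,\Omega}\|\grad\uuu\|_{\mu,\Omega}$; Young's inequality with parameter $\delta$ splits this into $\delta\|\grad\uuu\|_{\mu,\Omega}^2 + C_\delta\ks^2\omega^{-2}\|\fff\|_{\rho,\Omega}^2$, and the Korn inequality \eqref{eq_korn_general} combined with \eqref{eq_garding_real} controls $\|\grad\uuu\|_{\mu,\Omega}^2$ by $C(\omega^2\|\uuu\|_{\rho,\Omega}^2 + \|\fff\|_{\rho,\Omega}\|\uuu\|_{\rho,\Omega})$.

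Summing these bounds produces an inequality of the form $(2\gamma/M)\omega^2\|\uuu\|_{\rho,\Omega}^2 \leq (\xi + C\delta)\omega^2\|\uuu\|_{\rho,\Omega}^2 + C(1+\ks^2)\|\fff\|_{\rho,\Omega}\|\uuu\|_{\rho,\Omega} + C\omega^{-2}\|\fff\|_{\rho,\Omega}^2$. Choosing first $\delta$ and then $\xi$ small enough to absorb the first right-hand side term into the left leaves $c\omega^2\|\uuu\|_{\rho,\Omega}^2 \leq C(1+\ks^2)\|\fff\|_{\rho,\Omega}\|\uuu\|_{\rho,\Omega} + C\omega^{-2}\|\fff\|_{\rho,\Omega}^2$, and the quadratic inequality \eqref{eq_quadratic_inequality} applied with $a=c\omega^2$ and $x=\|\uuu\|_{\rho,\Omega}$ produces \eqref{eq_general_robin}. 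The main obstacle is the careful handling of the Korn factor $(1+\ks^{-1})^2$, which is harmless in the high-frequency regime of interest (where it is bounded by $4$ for $\ks\geq 1$) but forces the absorption argument to be restricted to $\ks$ bounded away from zero; this loss, together with the $(1+\ks^2)$ factor coming from Lemma~\ref{lemma_coarse_bound_RGDiss}, accounts for the suboptimal quadratic dependence in $\ks$ as compared to the linear dependence obtained in Theorem~\ref{theorem_simple_robin}.
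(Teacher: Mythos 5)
Your overall plan matches the paper's: the Morawetz identity, Lemma~\ref{lemma_coarse_bound_RGDiss} for $\LR_{\bh,\GDiss}$, the boundary mass term via the imaginary G\aa rding identity, $\LR_{\bh,\Omega}$ via \eqref{eq_assumption_LR_Omega} and the real G\aa rding identity, and a final absorption step via \eqref{eq_quadratic_inequality}. The one genuine divergence is your treatment of the work term $2\Re(\rho\fff,(\bh\cdot\grad)\uuu)_\Omega$, and that is where you run into trouble.

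You apply Young's inequality first, producing $\delta\|\grad\uuu\|_{\mu,\Omega}^2 + C_\delta\ks^2\omega^{-2}\|\fff\|_{\rho,\Omega}^2$, and then invoke the Korn inequality \eqref{eq_korn_general} to dominate $\|\grad\uuu\|_{\mu,\Omega}^2$. But \eqref{eq_korn_general} delivers the factor $\LK^2(1+\ks^{-1})^2$, so after combining with G\aa rding you actually obtain $\delta\LK^2(1+\ks^{-1})^2(\omega^2\|\uuu\|_{\rho,\Omega}^2 + \ldots)$, not the $\ks$-uniform bound you write. A fixed choice of $\delta$ therefore cannot absorb this contribution into $\gamma\omega^2\|\uuu\|_{\rho,\Omega}^2$ uniformly in $\ks$, and you correctly notice you would have to restrict to $\ks$ bounded away from zero. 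What you misattribute, however, is the source of this loss: it is an artifact of applying Young \emph{before} Korn, not something inherent in the argument, and it plays no role in explaining the $\ks^2$ scaling (which comes entirely from Lemma~\ref{lemma_coarse_bound_RGDiss}).

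The paper avoids the issue by never squaring $\|\grad\uuu\|_{\mu,\Omega}$. It applies Korn \emph{first}, to the first power, so the Cauchy--Schwarz prefactor $\ks$ meets the Korn factor $(1+\ks^{-1})$ and produces $\ks(1+\ks^{-1}) = 1 + \ks$, a perfectly tame factor. Then the real G\aa rding identity converts $\|\eps(\uuu)\|_{\mu,\Omega}$ into $\omega^{-1}\|\fff\|_{\rho,\Omega} + \omega\|\uuu\|_{\rho,\Omega}$, and the work term ends up bounded by $C(1+\ks)\bigl(\omega^{-2}\|\fff\|_{\rho,\Omega}^2 + \|\fff\|_{\rho,\Omega}\|\uuu\|_{\rho,\Omega}\bigr)$ --- all terms that are either absorbed only at the very last step via \eqref{eq_quadratic_inequality} or subsumed into $(1+\ks^2)$. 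No intermediate absorption of a $\|\grad\uuu\|_{\mu,\Omega}^2$ term is needed, hence no problem at small $\ks$. If you want to keep the Young-first route, you could rescue it by taking $\delta$ proportional to $\ks^2/(1+\ks)^2$, so that $\delta(1+\ks^{-1})^2$ is bounded and the counterpart term $C_\delta\ks^2\omega^{-2}\|\fff\|_{\rho,\Omega}^2$ becomes $C(1+\ks)^2\omega^{-2}\|\fff\|_{\rho,\Omega}^2 \lesssim (1+\ks^2)\omega^{-2}\|\fff\|_{\rho,\Omega}^2$; but as written, your proof does not establish the theorem uniformly in $\ks$.
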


\begin{proof}
The proof essentially proceeds as in Lemma \ref{lemma_morawetz_estimate}
with a different treatment of the boundary terms on $\GDiss$.
We start with the Morawetz identity in \eqref{eq_morawetz_identity}.
Recalling the assumptions in \eqref{eq_assumptions_multiplier} and the
notation in \eqref{eq_bounds_h}, we have the lower bound
\begin{equation*}
(d-\eta) \omega^2 \|\uuu\|_{\rho,\Omega}^2
\leq
\omega^2 \int_\Omega (\div \bh +\LV_{\bh}(\rho)) \rho|\uuu|^2
+
\LB_{\bh,\GDiss} + \LB_{\bh,\GDir}
\end{equation*}
for the left-hand side of \eqref{eq_morawetz_identity}.

We then upper bound the right-hand side of \eqref{eq_morawetz_identity}.
For the terms related to $\GDiss$, we recall from Lemma \ref{lemma_coarse_bound_RGDiss}
above that
\begin{equation*}
|\LR_{\bh,\GDiss}(\uuu)|
\leq
C_\xi
(1+\ks^2) \omega\|\uuu\|_{\AAA,\GDiss}^2
+
\omega^{-2}\|\fff\|_{\rho,\Omega}^2
+
\xi \omega^2\|\uuu\|_{\rho,\Omega}^2
\end{equation*}
for all $\xi \in (0,1)$, and we write that
\begin{equation*}
\omega^2
\int_{\GDiss} (\bh\cdot\bn)
\rho|\uuu|^2
\leq
\omega^2 M\RRR \|\uuu\|_{\rho,\Omega}^2
\leq
C \ks \omega \|\uuu\|_{\AAA,\Omega}^2.
\end{equation*}
For the remaining terms in the right-hand side
of \eqref{eq_morawetz_identity}, we closely follow
Lemma \ref{lemma_morawetz_estimate}. Here, we first
note that due to the G\aa rding inequality in \eqref{eq_garding_real},
we have
\begin{equation*}
\|\eps(\uuu)\|_{\mu,\Omega}
\leq
C
\left \{
\omega^{-1}\|\fff\|_{\rho,\Omega}
+
\omega \|\uuu\|_{\rho,\Omega}
\right \}.
\end{equation*}
Then, using Korn inequality in $\BH^1_{\GDir}(\Omega)$
and the definition of $\ks$, we have
\begin{align*}
2\Re (\rho\fff,(\bh\cdot\grad)\uuu)_\Omega
&\leq
C \ks\omega^{-1} \|\fff\|_{\rho,\Omega} \|\grad \uuu\|_{\mu,\Omega}
\\
&\leq
C (1+\ks) \omega^{-1} \|\fff\|_{\rho,\Omega}
\left (
\omega \|\uuu\|_{\rho,\Omega}
+
\|\eps(\uuu)\|_{\mu,\Omega}
\right )
\end{align*}
and therefore
\begin{equation*}
2\Re (\rho\fff,(\bh\cdot\grad)\uuu)_\Omega
\leq
C (1+\ks) \left (
\omega^{-2} \|\fff\|_{\rho,\Omega}^2
+
\omega^2 \|\uuu\|_{\rho,\Omega}^2
\right ).
\end{equation*}
Finally, since $\varepsilon \leq 2$ by definition,
\begin{align*}
\LR_{\bh,\Omega}(\uuu)
&\leq
(d-2+\varepsilon)\omega^2\|\uuu\|_{\rho,\Omega}^2
+
(d-2+\varepsilon)\|\fff\|_{\rho,\Omega}\|\uuu\|_{\rho,\Omega}
\\
&\leq
(d-2+\varepsilon)\omega^2\|\uuu\|_{\rho,\Omega}^2
+
C\|\fff\|_{\rho,\Omega}\|\uuu\|_{\rho,\Omega}.
\end{align*}
Combining those bounds, we have
\begin{align*}
&2\Re (\rho\fff,(\bh\cdot\grad)\uuu)_\Omega
+
\omega^2
\int_{\GDiss} (\bh\cdot\bn)
\rho|\uuu|^2
+
\LR_{\bh,\GDiss}(\uuu)
+
\LR_{\bh,\Omega}(\uuu)
\\
&\leq
C(1+\ks)\left \{\omega^{-2}\|\fff\|_{\rho,\Omega}^2
+
\|\fff\|_{\rho,\Omega}\|\uuu\|_{\rho,\Omega}
\right \}
+
C_\xi(1+\ks^2)\omega\|\uuu\|_{\AAA,\GDiss}^2
\\
&+
(d-2+\varepsilon+\xi)\omega^2\|\uuu\|_{\rho,\Omega}^2
\end{align*}
and if we substract $(d-2+\varepsilon+\xi)\omega^2\|\uuu\|_{\rho,\Omega}^2$
from both sides of \eqref{eq_morawetz_identity}, we obtain
\begin{align*}
(2-\eta-\varepsilon-\xi)\omega^2\|\uuu\|_{\rho,\Omega}^2
&\leq
C(1+\ks)\left \{\omega^{-2}\|\fff\|_{\rho,\Omega}^2
+
\|\fff\|_{\rho,\Omega}\|\uuu\|_{\rho,\Omega}
\right \}
\\
&+
C_\xi(1+\ks^2)\omega\|\uuu\|_{\AAA,\GDiss}^2.
\end{align*}
Since $2\gamma = 2-\eta-\varepsilon > 0$, we may chose $\xi = \gamma$, which gives
\begin{equation*}
\gamma \omega^2\|\uuu\|_{\rho,\Omega}^2
\leq
C(1+\ks)\left \{\omega^{-2}\|\fff\|_{\rho,\Omega}^2
+
\|\fff\|_{\rho,\Omega}\|\uuu\|_{\rho,\Omega}
\right \}
+
C_\gamma(1+\ks^2)\omega\|\uuu\|_{\AAA,\GDiss}^2.
\end{equation*}
Recalling \eqref{eq_garding_imag}, we have
$\omega\|\uuu\|_{\AAA,\GDiss}^2 \leq \|\fff\|_{\rho,\Omega}\|\uuu\|_{\rho,\Omega}$. Hence,
since $1+\ks \leq 2(1+\ks^2)$, and because the generic constants $C$ are allowed to depend on
$\gamma$, we can write
\begin{equation*}
\omega^2\|\uuu\|_{\rho,\Omega}^2
\leq
C(1+\ks^2) \left \{ \omega^{-2} \|\fff\|_{\rho,\Omega}^2
+
\|\fff\|_{\rho,\Omega}\|\uuu\|_{\rho,\Omega}\right \}.
\end{equation*}
At that point, we easily conclude using a Young inequality.
Indeed, for all $\tau>0$, we have
\begin{equation*}
(1+\ks^2)\|\fff\|_{\rho,\Omega}\|\uuu\|_{\rho,\Omega}
\leq
C_{\tau} (1+\ks^2)^2\omega^{-2}\|\fff\|_{\rho,\Omega}^2
+
\tau\omega^2\|\uuu\|_{\rho,\Omega}^2,
\end{equation*}
and taking $\tau$ small enough that
\begin{equation*}
\omega^2\|\uuu\|_{\rho,\Omega}^2
\leq
C(1+\ks^2)^2 \omega^{-2}\|\fff\|_{\rho,\Omega}^2,
\end{equation*}
where we used the fact that $(1+\ks^2) \leq (1+\ks^2)^2$.
\end{proof}

\section{Examples}
\label{section_examples}

In this section, we focus on the case where $\Omega \subset B_\RRR$
and $\GDiss \eq \partial B_\RRR$, and assume that $\AAA$ satisfies
\eqref{eq_assumption_AAA}. We will consider the multiplier $\bh(\bx) = \bx$.
As a result, we have in particular
\begin{equation*}
M = m = 1
\end{equation*}
in \eqref{eq_bounds_h}.
In addition, due to Lemmas \ref{lemma_robin_boundary} and \ref{lemma_improved_sphere},
because $\GDiss$ is a sphere, we can take
\begin{equation*}
\zeta = 0 \qquad \chi = \max \left \{ \frac{1}{\alpha_T}, \frac{d-1}{\alpha_N} \right \}
\end{equation*}
in \eqref{eq_estimate_LR_GDiss}. Finally, due to our choice of $\bh$, we can further simplify
$\LR_{\bh,\Omega}(\bv)$. Recall that by definition
\begin{align*}
\LR_{\bh,\Omega}(\bv)
&\eq
\int_\Omega \left \{
(\div \bh + \LV_{\bh}(\mu))2\mu|\eps(\bv)|^2
+
(\div \bh + \LV_{\bh}(\lambda))\lambda|\div \bv|^2
\right \}
\\
&-2\Re\int_\Omega \left \{
\mu \eps(\bv) : (\tgrad \bh \tgrad \overline{\bv})
+
\lambda (\div \bv) \tgrad \bh : (\tgrad \overline{\bv})^T
\right \}.
\end{align*}
Here, $\tgrad \bh = \tens{I}$, and we have on the one hand
\begin{equation*}
\int_\Omega \mu \eps(\bv) : (\tgrad \bh \tgrad \overline{\bv})
=
\int_\Omega \mu \eps(\bv) : \tgrad \overline{\bv}
=
\int_\Omega \mu |\eps(\bv)|^2
\end{equation*}
by symmetry of $\eps(\bv)$. On the other hand we have
\begin{equation*}
\int_\Omega \lambda (\div \bv) \tgrad \bh : (\tgrad \overline{\bv})^T
=
\int_\Omega \lambda (\div \bv) \tens{I} : (\tgrad \overline{\bv})^T
=
\int_\Omega \lambda |\div \bv|^2,
\end{equation*}
and recalling that $\div \bh = d$, it follows that
\begin{equation}
\label{eq_LR_Omega_bh_bx}
\LR_{\bh,\Omega}(\bv)
=
\int_\Omega \left \{
(d-2 + \LV_{\bh}(\mu))2\mu|\eps(\bv)|^2
+
(d-2 + \LV_{\bh}(\lambda))\lambda|\div \bv|^2
\right \}.
\end{equation}

\subsection{Star-shaped obstacles}

We start with the case where $\Omega = B_\RRR \setminus D$,
where $D \subset B_\RRR$ is a closed set star-shaped with respect to $\bo$.
The medium of propagation is assumed to be homogeneous,
meaning that $\rho,\mu$ and $\lambda$ are constant.
It is then clear that
$\LV_{\bh}(\rho) = \LV_{\bh}(\mu) = \LV_{\bh}(\lambda) = 0$,
so that we can further simplify the expression
of $\LR_{\bh,\Omega}$ given in \eqref{eq_LR_Omega_bh_bx} into
\begin{equation*}
\LR_{\bh,\Omega}^2(\bv)
=
(d-2)
\left (
2\|\eps(\bv)\|_{\mu,\Omega}^2
+
\|\div \bv\|_{\lambda,\Omega}^2
\right ).
\end{equation*}
For the same reason, we also have
\begin{equation*}
\int_\Omega(\div \bh + \LV_{\bh}(\rho)) \rho|\bv|^2
=
d\|\bv\|_{\rho,\Omega}^2.
\end{equation*}
It means that \eqref{eq_assumption_L2_Omega} and \eqref{eq_assumption_LR_Omega}
are satisfied with $\varepsilon = \eta = 0$. In other word, the assumption
in \eqref{eq_assumption_gamma} holds true with $\gamma = 1$.
As a result, we can simplify the stability estimate given in
\eqref{eq_stability_estimate} as follows:
\begin{align}
\nonumber
\omega^2\|\uuu\|_{\rho,\Omega}
\leq
\Big \{
&
\left (\frac{d}{2}-1\right )
+
\frac{1}{\sqrt{\alpha_{\min}}}\ks^{1/6}
+
\frac{1}{4} \ks^{1/3}
+
\\
\nonumber
&
\left ( \frac{5}{2} + \frac{1}{4}\Crob \right ) \ks^{2/3}
+
\\
&
\left (
1 + \frac{1}{2\alpha_{\min}} + \frac{1}{16} \Crob^2
\right )\ks
\Big \}\|\fff\|_{\rho,\Omega}.
\label{eq_simplified_morawetz_estimate_star_shaped}
\end{align}

We start with the ideal case where $\AAA = (\rho\mu)^{1/2} \tens{I}$,
whence $\alpha_N = \alpha_T = 1$.

\begin{corollary}[Star-shaped obstacle with favorable Robin coefficients]
\label{corollary_obstacle_ideal}
\begin{subequations}
\label{eq_obstacle_ideal}
Assume that $\alpha_N = \alpha_T = 1$. Then, we have
\begin{equation}
\label{eq_estimate_obstacle_ideal}
\omega^2\|\uuu\|_{\rho,\Omega}
\leq
\left \{
\left (\frac{d}{2}-1\right ) + \ks^{1/6} + \frac{1}{4} \ks^{1/3}
+
\frac{13}{4} \ks^{2/3}
+
\frac{33}{16} \ks
\right \}\|\fff\|_{\rho,\Omega}.
\end{equation}
In addition, the simplified estimate
\begin{equation}
\label{eq_estimate_obstacle_ideal_simplified}
\omega^2\|\uuu\|_{\rho,\Omega}
\leq
(3 + 5 \ks)
\|\fff\|_{\rho,\Omega}
\end{equation}
holds true.
\end{subequations}
\end{corollary}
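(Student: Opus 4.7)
The plan is to obtain Corollary \ref{corollary_obstacle_ideal} directly as a specialization of the general stability estimate \eqref{eq_simplified_morawetz_estimate_star_shaped} already derived at the start of Section \ref{section_examples} for the star-shaped homogeneous case, followed by a coarse Young-type reduction to arrive at \eqref{eq_estimate_obstacle_ideal_simplified}.

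First I would simply substitute the hypothesis $\alpha_N = \alpha_T = 1$ into \eqref{eq_simplified_morawetz_estimate_star_shaped}. This gives $\alpha_{\min} = \alpha_{\max} = 1$ and hence, from the definition
$\Crob = \bigl(2 + \sqrt{\alpha_{\max}/\alpha_{\min}}\bigr)\sqrt{\alpha_{\max}}$
in Lemma \ref{lemma_robin_boundary}, the value $\Crob = 3$. The coefficient of $\ks^{1/6}$ becomes $1/\sqrt{\alpha_{\min}} = 1$, the coefficient of $\ks^{2/3}$ becomes $5/2 + 3/4 = 13/4$, and the coefficient of $\ks$ becomes $1 + 1/2 + 9/16 = 33/16$. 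Plugging these in yields \eqref{eq_estimate_obstacle_ideal} verbatim.

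For the simplified bound \eqref{eq_estimate_obstacle_ideal_simplified}, the key tool is the elementary concavity inequality $\ks^a \leq (1-a) + a\ks$ valid for $a \in [0,1]$ and $\ks \geq 0$. Applying this with $a = 1/6, 1/3, 2/3$ turns every fractional power in \eqref{eq_estimate_obstacle_ideal} into an affine function of $\ks$. A short arithmetic check (additively regrouping the constant pieces and the $\ks$ pieces respectively) shows that the total constant contribution is bounded by $3$ and the total coefficient of $\ks$ is bounded by $5$, using also that $d/2 - 1 \leq 1/2$ since $d \in \{2,3\}$.

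I do not anticipate any genuine obstacle: both assertions reduce to bookkeeping once $\Crob$ and $\alpha_{\min}$ are identified, and the only non-mechanical step is choosing the right form of Young's inequality for the intermediate powers $\ks^{1/6}, \ks^{1/3}, \ks^{2/3}$, where the concavity bound $\ks^a \leq (1-a)+a\ks$ is sharper than the standard $\ks^a \leq 1 + \ks$ and is precisely what is required to keep the additive constant below $3$ and the slope below $5$.
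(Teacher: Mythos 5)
Your proposal is correct and takes essentially the same route as the paper: substitute $\alpha_N = \alpha_T = 1$ into \eqref{eq_simplified_morawetz_estimate_star_shaped} to get $\Crob = 3$ and the coefficients $5/2 + 3/4 = 13/4$ and $1 + 1/2 + 9/16 = 33/16$, then pass to the simplified bound by applying the concavity inequality $\ks^a \leq (1-a) + a\ks$ for $a \in \{1/6,\, 1/3,\, 2/3\}$, which is precisely the Young inequality $b \leq 1/p + b^q/q$ with $q = 1/a$ that the paper invokes, giving a constant term $1/2 + 5/6 + 1/6 + 13/12 = 31/12 \leq 3$ and a slope $1/6 + 1/12 + 13/6 + 33/16 = 215/48 \leq 5$.
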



\begin{proof}
Since $\alpha_N = \alpha_T = 1$ by assumption, we have $\Crob = 3$,
and \eqref{eq_simplified_morawetz_estimate_star_shaped} further simplifies
into
\begin{equation*}
\omega^2\|\uuu\|_{\rho,\Omega}
\leq
\Big \{
\left (\frac{d}{2}-1\right ) + \ks^{1/6} + \frac{1}{4} \ks^{1/3}
+
\left ( \frac{5}{2} + \frac{3}{4} \right ) \ks^{2/3}
+
\left (
1 + \frac{1}{2} + \frac{9}{16}
\right )\ks
\Big \}\|\fff\|_{\rho,\Omega}.
\end{equation*}
Then, \eqref{eq_estimate_obstacle_ideal} simply follows by simplifying the fractions, whereas
we obtain \eqref{eq_estimate_obstacle_ideal_simplified} by noticing that $d \leq 3$ and
estimating the middle terms with the inequality ``$b \leq 1/p + b^q/q$'' 
with $p,q>1$ such that $1/p+1/q=1$ and $b\geq0$, yielding
\[
\ks^{1/6}\leq 5/6+\ks/6, \quad \ks^{1/3}\leq 2/3+\ks/3, \quad \ks^{2/3}\leq 1/3+2\ks/3.
\]
\end{proof}

The estimates in Corollary \ref{corollary_obstacle_ideal} have two desirable properties.
First, the coefficient multiplying $\ks$ in the leading term is just slightly above two.
The sharp estimate established in \cite{galkowski_spence_wunsch_2020a} and discussed at
\eqref{eq_sharp_acoustic_intro} in the introduction has coefficient $1$, so our bound is
sharp by a factor $33/16 \sim 2$. Second, the estimate is fully explicit and does not depend
on $\lambda$, as expected.

Unfortunately, the choice of coefficients for the Robin matrix $\AAA$ on the dissipative
boundary is not realistic from a numerical approximation perspective. Indeed, the coefficients
are correctly tuned to absorb shear waves, but are not suited for pressure waves. A better choice
is then $\alpha_T = 1$ and $\alpha_N = \velpV/\velsV$, see \cite[(54)]{gachter_grote_2003a} in the
case of constant coefficients $\lambda, \mu$ and $\rho$. The tangential component of the matrix
then correctly absorbed shear waves: if a shear wave reaches the boundary at normal incidence,
then the oscillations happen in the tangential directions. The modification of the coefficients
$\alpha_N$ makes sure that pressure waves, that oscillates in the normal direction at normal
incidence, are treated with the correct speed. The problem with this modification is that
since $\alpha_N = \sqrt{2+\lambda/\mu}$, $\alpha_N$ now depends on $\lambda$. In this setting
we are still able to provide a sharp estimate in terms of the frequency scaling below, but are
not able to establish any robustness as $\lambda/\mu \to +\infty$.

Although this is an unpleasing outcome, we propose an intuitive explanation as to why the
dependency in $\lambda$ arises here. It is well-known that the absorbing boundary condition
on $\GDiss = \partial B_\RRR$ is only a good approximation of the radiation condition when
$\RRR$ is large
\cite{bayliss_gunzburger_turkel_1982a,galkowski_lafontaine_spence_2021a,goldstein_1981a}.
However, the distance between the center of the domain and the absorbing boundary scales as
\begin{equation*}
\ks
=
\sqrt{\frac{\rho}{\mu}}
\omega\RRR
\qquad
\kp
=
\sqrt{\frac{\rho}{2\mu+\lambda}}
\omega\RRR
\end{equation*}
in terms numbers of pressure and shear wavelengths. In particular,
the relative distance measured in number of pressure wavelengths
tends to zero as $\lambda$ increases, meaning that the absorbing boundary
condition is not meant to be an accurate approximation of the radiation
condition in this case.

We now state a stability estimates with realistic Robin coefficients.
As we remarked above, the estimate is not robust in $\lambda/\mu$, but
the scaling in the frequency is optimal.

\begin{corollary}[Star-shaped obstacle with realistic Robin coefficients]
\label{corollary_obstacle_realistic}
Assume that $\alpha_N = \sqrt{2+\lambda/\mu}$ and $\alpha_T = 1$. Then,
\begin{equation}
\label{eq_obstacle_realistic}
\omega^2\|\uuu\|_{\rho,\Omega}
\leq
\left \{
\frac{1}{2}
+
\ks^{1/6}
+
\frac{1}{4} \ks^{1/3}
+
\left ( \frac{7}{2} + \frac{1}{2} \left (\frac{\lambda}{\mu} \right )^{1/2}\right ) \ks^{2/3}
+
\left ( 2 + \frac{1}{4}\frac{\lambda}{\mu} \right )\ks
\right \}\|\fff\|_{\rho,\Omega}.
\end{equation}
\end{corollary}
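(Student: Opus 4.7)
The plan is to specialize the already-established simplified stability estimate \eqref{eq_simplified_morawetz_estimate_star_shaped} to the Robin coefficients $\alpha_T = 1$ and $\alpha_N = \sqrt{2+\lambda/\mu}$, then bound the resulting $\Crob$-dependent quantities in terms of $(\lambda/\mu)^{1/2}$ and $\lambda/\mu$ using elementary inequalities. Under the hypothesis we immediately have $\alpha_{\min}=1$ and $\alpha_{\max}=\alpha_N$, which fixes the constant term, the coefficient of $\ks^{1/6}$, and the coefficient of $\ks^{1/3}$ in \eqref{eq_simplified_morawetz_estimate_star_shaped} as, respectively, $d/2-1=1/2$ (for $d=3$), $1/\sqrt{\alpha_{\min}}=1$, and $1/4$, exactly matching the right-hand side of \eqref{eq_obstacle_realistic}. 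The remaining work concerns only the coefficients of $\ks^{2/3}$ and $\ks$, which involve
\[ \Crob = (2+\sqrt{\alpha_N})\sqrt{\alpha_N} = 2\sqrt{\alpha_N}+\alpha_N. \]

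The two elementary bounds I would use repeatedly are the AM--GM inequality $2\sqrt{\alpha_N}\leq 1+\alpha_N$, which gives $\Crob\leq 1+2\alpha_N$, together with the subadditivity of the square root $\sqrt{2+\lambda/\mu}\leq \sqrt{2}+\sqrt{\lambda/\mu}$, which isolates an explicit $(\lambda/\mu)^{1/2}$ term. For the $\ks^{2/3}$ coefficient, combining the two yields
\[ \frac{\Crob}{4} \leq \frac{1}{4}+\frac{\alpha_N}{2} \leq \frac{1}{4}+\frac{\sqrt{2}}{2}+\frac{(\lambda/\mu)^{1/2}}{2} \leq 1+\frac{(\lambda/\mu)^{1/2}}{2}, \]
the last inequality being the numerical fact $1/4+\sqrt{2}/2<1$; adding $5/2$ to both sides reproduces the claimed $7/2+(\lambda/\mu)^{1/2}/2$.

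For the $\ks$ coefficient, squaring the bound $\Crob\leq 1+2\alpha_N$ and using $\alpha_N^2 = 2+\lambda/\mu$ provides
\[ \frac{\Crob^2}{16} \leq \frac{1+4\alpha_N+4\alpha_N^2}{16} = \frac{1}{16}+\frac{\alpha_N}{4}+\frac{1}{2}+\frac{\lambda}{4\mu}, \]
so the dominant contribution $\lambda/(4\mu)$ of the target emerges naturally from $\alpha_N^2$. The residual $1/16+\alpha_N/4$, when added to the pre-existing $1+1/(2\alpha_{\min})=3/2$ of \eqref{eq_simplified_morawetz_estimate_star_shaped}, must be absorbed into the gap $2-3/2=1/2$ of the target constant. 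This is the main obstacle: bounding $\alpha_N$ by the subadditivity inequality alone produces a spurious $(\lambda/\mu)^{1/2}$ term that the right-hand side of \eqref{eq_obstacle_realistic} does not contain. My plan to deal with it is to route the $(\lambda/\mu)^{1/2}$ contribution via a Young inequality of the form $(\lambda/\mu)^{1/2}\leq c+\lambda/(4c\mu)$, choosing $c$ so that on the one hand the constant $1/16+\sqrt{2}/4+c/4$ remains below $1/2$, and on the other hand the resulting $\lambda/(16c\mu)$ stays within the remaining headroom in the $\lambda/\mu$ coefficient; the strict slack $1/4+\sqrt{2}/2<1$ inherited from the $\ks^{2/3}$ bound leaves enough room to perform this bookkeeping cleanly.
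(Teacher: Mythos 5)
Your reduction to the simplified estimate \eqref{eq_simplified_morawetz_estimate_star_shaped}, your identification of $\alpha_{\min}=1$ and $\alpha_{\max}=\alpha_N=\sqrt{2+\lambda/\mu}$, and your treatment of the $\ks^{2/3}$ coefficient via $\Crob = 2\sqrt{\alpha_N}+\alpha_N \leq 1+2\alpha_N$ and $\sqrt{2+x}\leq\sqrt{2}+\sqrt{x}$ are all correct. The gap is in the $\ks$ coefficient, and your proposed fix cannot close it. With $\Crob\leq 1+2\alpha_N$ you obtain
\begin{equation*}
\frac{\Crob^2}{16}
\leq
\frac{1}{16}+\frac{\alpha_N}{4}+\frac{\alpha_N^2}{4}
=
\frac{1}{16}+\frac{\alpha_N}{4}+\frac{1}{2}+\frac{\lambda}{4\mu},
\end{equation*}
so the $\ks$ coefficient of \eqref{eq_simplified_morawetz_estimate_star_shaped} is bounded by $2+\tfrac{1}{16}+\tfrac{\alpha_N}{4}+\tfrac{\lambda}{4\mu}$. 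The piece $\lambda/(4\mu)$ already \emph{saturates} the target $\lambda/\mu$ coefficient in \eqref{eq_obstacle_realistic} exactly; there is no headroom whatsoever for the extra $\lambda/(16c\mu)$ that your Young inequality would inject. Moreover, the slack $1-\tfrac14-\tfrac{\sqrt2}{2}$ you invoke lives in the coefficient of $\ks^{2/3}$, a different power of $\ks$, and slack in one power cannot be traded against overflow in another (any Young-type transfer from $\ks^{2/3}$ to $\ks$ would \emph{increase}, not decrease, the $\ks$ coefficient). Thus the leftover $\alpha_N/4$ term cannot be absorbed into the stated target constants by your route.

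The paper's one-line proof avoids the cross term altogether: it writes $\Crob = 2\sqrt{\alpha_N}+\alpha_N\leq 2\alpha_N$, so $\Crob^2/16\leq\alpha_N^2/4 = \tfrac12+\tfrac{\lambda}{4\mu}$ and the $\ks$ coefficient equals the target $2+\tfrac{\lambda}{4\mu}$ with zero slack. Note, however, that $2\sqrt{\alpha_N}+\alpha_N\leq 2\alpha_N$ requires $\sqrt{\alpha_N}\geq 2$, i.e.\ $\alpha_N\geq 4$, i.e.\ $\lambda/\mu\geq 14$. So the paper's proof, like yours, does not in fact establish the stated constants for all $\lambda/\mu\geq 0$; it does so only in the nearly incompressible regime. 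The complete tightness of the $\ks$ coefficient in \eqref{eq_obstacle_realistic} under the paper's bound is precisely the reason your attempted remedy has no room to operate.
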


\begin{proof}
The estimate in \eqref{eq_obstacle_realistic} follows from
\eqref{eq_simplified_morawetz_estimate_star_shaped} upon observing
that 
\begin{equation*}
\Crob
=
\left (
2 + \sqrt{\alpha_{\max}}
\right )
\sqrt{\alpha_{\max}}
\leq
2\alpha_{\max} = 2 \left (2 + \frac{\lambda}{\mu}\right )^{1/2}.
\end{equation*}
for our choice of Robin coefficients.
\end{proof}

\subsection{Smooth heterogeneous media}
\label{section_smooth_coefficients}

We now turn our attention to heterogeneous media where $\rho,\mu$ and $\lambda$
depend on the space variable. For simplicity, we restrict our attention to the
case where $\Omega = B_\RRR$, although the case $\Omega = B_\RRR \setminus D$
with $D \subset B_\RRR$ star-shaped poses no additional difficulty. We consider
coefficients $\rho,\mu,\lambda \in C^{1,1}(\overline{\Omega})$ with radial growth
conditions. Specifically, we require that $\rho$ is not ``too decreasing'' in the
sense that
\begin{equation*}
\bx \cdot \grad \rho \geq -\Theta_\rho \rho
\end{equation*}
for some $\Theta_\rho \leq 2$, and conversely, we demand that
$\mu$ and $\lambda$ are not ``too increasing'', by which mean that
\begin{equation*}
\bx \cdot \grad \mu \leq \Theta_\mu \mu,
\qquad
\bx \cdot \grad \lambda \leq \Theta_\lambda \lambda,
\end{equation*}
for $\Theta_\mu,\Theta_\lambda \leq 2$. We will in fact further constrain
these values by requiring that
\begin{equation*}
\Theta \eq \Theta_\rho+\max(\Theta_\mu,\Theta_\lambda) < 2.
\end{equation*}
Recalling their definition in \eqref{eq_definition_velV},
the above requirements essentially says that the shear and pressure
wavespeeds cannot increase to rapidly when moving away from the origin.
Similar constraints on the wave speed also naturally arise for scalar Helmholtz
problems \cite{barucq_chaumontfrelet_gout_2017a,moiola_spence_2019a} and Maxwell's
equations \cite{chaumontfrelet_moiola_spence_2023a,verfurth_2019a}.

The assumptions discussed above ensure that
\begin{equation*}
\LV_{\bh}(\rho) \geq -\Theta_\rho,
\qquad \LV_{\bh}(\mu) \leq \Theta_\mu,
\qquad
\LV_{\bh}(\lambda) \leq \Theta_\lambda,
\end{equation*}
so that, recalling \eqref{eq_LR_Omega_bh_bx},
\begin{equation*}
\LR_{\bh,\Omega}^2(\bv)
\leq
(d-2+\Theta_{\mu}) 2\|\eps(\bv)\|_{\mu,\Omega}^2
+
(d-2+\Theta_{\lambda}) \|\div \bv\|_{\lambda,\Omega}^2.
\end{equation*}
and
\begin{equation*}
\int_\Omega (\div \bh+\LV_{\bh}(\rho)) \rho|\bv|^2
\geq
(d-\Theta_\rho)\|\bv\|_{\rho,\Omega}^2.
\end{equation*}
It follows that \eqref{eq_assumption_gamma} holds true with
\begin{equation*}
2\gamma \eq 2-\Theta > 0.
\end{equation*}

\begin{corollary}[Smooth heterogeneous media]
Under the assumptions stated above, the estimate in \eqref{eq_stability_estimate} holds true.
\end{corollary}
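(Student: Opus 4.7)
The corollary amounts to a bookkeeping exercise: I need only verify that all the abstract hypotheses of Theorem~\ref{theorem_simple_robin} are met in the present setting, since the text immediately preceding the statement has already done most of the work. Once the hypotheses are checked, the estimate \eqref{eq_stability_estimate} applies verbatim.

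The plan is as follows. First, I would note that the standing assumptions of Section~\ref{section_simple_robin} are in force for this example: $\AAA$ satisfies \eqref{eq_assumption_AAA} by the blanket hypothesis of Section~\ref{section_examples}; the multiplier $\bh(\bx)=\bx$ satisfies $\bh\times\bn=\bzero$ on $\GDiss=\partial B_\RRR$ and yields $M=m=1$ together with $\zeta=0$ and the explicit $\chi$ already computed in the section's preamble. The sign condition \eqref{eq_assumption_sign_boundary} is immediate from $\bh\cdot\bn=\RRR\geq 0$ on $\GDiss$ (and $\GDir=\emptyset$ since $\Omega=B_\RRR$).

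Next, I would match the two radial-growth inequalities stated just above the corollary to the quantitative assumptions \eqref{eq_assumption_L2_Omega} and \eqref{eq_assumption_LR_Omega}. The inequality
\begin{equation*}
\int_\Omega(\div\bh+\LV_{\bh}(\rho))\rho|\bv|^2 \geq (d-\Theta_\rho)\|\bv\|_{\rho,\Omega}^2
\end{equation*}
gives \eqref{eq_assumption_L2_Omega} with $\eta\eq\Theta_\rho$. The companion bound
\begin{equation*}
|\LR_{\bh,\Omega}(\bv)| \leq (d-2+\Theta_\mu)\,2\|\eps(\bv)\|_{\mu,\Omega}^2 + (d-2+\Theta_\lambda)\|\div\bv\|_{\lambda,\Omega}^2
\end{equation*}
together with $(d-2+\Theta_\mu)\leq (d-2+\max(\Theta_\mu,\Theta_\lambda))$ (and similarly for $\Theta_\lambda$) yields \eqref{eq_assumption_LR_Omega} with $\varepsilon\eq\max(\Theta_\mu,\Theta_\lambda)$. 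Then $2\gamma=2-\eta-\varepsilon=2-\Theta>0$ by the assumption $\Theta<2$, so \eqref{eq_assumption_gamma} is satisfied.

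Having validated every hypothesis of Theorem~\ref{theorem_simple_robin}, I would simply invoke that theorem to conclude that the stability estimate \eqref{eq_stability_estimate} holds with the values of $M$, $m$, $\gamma$, $\chi$, $\zeta$, $\Crob$ recorded above. There is no real obstacle here; the only subtlety worth flagging is the regularity requirement $\rho,\mu,\lambda\in C^{1,1}(\overline{\Omega})$ imposed at the outset of Section~\ref{section_smooth_coefficients}, which is what allows the Morawetz identity of Theorem~\ref{theorem_morawetz_identity} (and thus Theorem~\ref{theorem_simple_robin}) to be applied. Since existence and uniqueness of $\uuu\in\BH^1_{\GDir}(\Omega)$ and the bound itself are both outputs of Theorem~\ref{theorem_simple_robin}, the proof is complete.
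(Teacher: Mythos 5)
Your proposal is correct and is essentially the paper's (implicit) argument: the discussion preceding the corollary already establishes \eqref{eq_assumption_L2_Omega} and \eqref{eq_assumption_LR_Omega} with $\eta=\Theta_\rho$ and $\varepsilon=\max(\Theta_\mu,\Theta_\lambda)$, so $2\gamma=2-\Theta>0$, and Theorem~\ref{theorem_simple_robin} applies with $M=m=1$, $\zeta=0$, and the listed $\chi$. One small remark for tidiness: the monotonicity conditions only yield a one-sided upper bound on $\LR_{\bh,\Omega}(\bv)$, not on $|\LR_{\bh,\Omega}(\bv)|$ as written, but this mirrors the paper exactly and the proof of Lemma~\ref{lemma_morawetz_estimate} only uses the upper bound, so the verification is harmless.
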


\subsection{Transmission problems}

In this section, our goal is to relax the smoothness
assumption on the coefficients to deal with transmission
problems. We consider the case where $\rho$ is radially increasing and $\mu$ and $\lambda$
are decreasing radially. Specifically, we assume that $\rho,\mu,\lambda \in L^\infty(\Omega)$
\begin{equation}
\label{eq_monotonicity}
\rho((1+h)\bx)-\rho(\bx) \geq 0,
\quad
\mu((1+h)\bx)-\mu(\bx) \leq 0,
\quad
\lambda((1+h)\bx)-\lambda(\bx) \leq 0,
\end{equation}
for a.e. $\bx \in \Omega$ and $h > 0$ s.t. $(1+h)\bx \in \Omega$.
Notice that then, if we additionally assume that $\rho$, $\mu$ and $\lambda$ are
in $C^{0,1}(\overline{\Omega})$, then the discussion in Section \ref{section_smooth_coefficients}
is valid with $\Theta = 0$ and, therefore, with $\gamma = 1$. In fact, following
\cite{chaumontfrelet_moiola_spence_2023a}, Theorem \ref{theorem_simple_robin}
remains valid under the sole assumption that \eqref{eq_monotonicity} holds true.
This is established by first mollifying the coefficients while preserving the
monotonicity condition (this can be done following
\cite[Lemma 3.4]{chaumontfrelet_moiola_spence_2023a}), and applying the stability results
for the mollified problem and finally remarking the solution to the mollify
problem converges towards the solution of the problem with original coefficients.
For the sake of simplicity, we do not reproduce the proof here and refer the reader
to \cite[Theorem 1.2]{chaumontfrelet_moiola_spence_2023a} for more details.

We now discuss relevant choice of Robin parameters. Having in mind finite
element discretization, we focus on the case where the coefficients are
constant in a neighborhood of $\GDiss$. Due to the monotonicity assumption
in \eqref{eq_monotonicity}, it means that $\rho = \rho_{\max}$, $\mu = \mu_{\min}$
and $\lambda = \lambda_{\min}$ in the vicinity of the dissipative boundary.
As explained above, the choice $\AAA = \sqrt{\rho_{\max}\mu_{\min}}$ (i.e.
$\alpha_T = \alpha_N = 1$) leads to very good stability estimates, but is not
suited to absorb pressure waves. In contrast, the choice $\alpha_T \eq 1$ and
$\alpha_N = \sqrt{2+\lambda_{\min}/\mu_{\min}}$ is realistic from an application
perspective.

\begin{corollary}[Transmission problem]
\label{corollary_transmission_problem}
Assume that the monotonicity assumption in \eqref{eq_monotonicity}
is satisfied. Then, if $\alpha_N = \alpha_T = 1$, the estimates in
\eqref{eq_obstacle_ideal} hold true. In addition, when
$\alpha_N = \sqrt{2+\lambda_{\min}/\mu_{\min}}$ and $\alpha_T = 1$,
the estimate in \eqref{eq_obstacle_realistic} holds true with
$\lambda/\mu$ replaced by $\lambda_{\min}/\mu_{\min}$.
\end{corollary}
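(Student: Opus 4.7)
The plan is to combine the smooth-coefficient analysis of Section~\ref{section_smooth_coefficients} with a mollification argument that preserves the monotonicity \eqref{eq_monotonicity}. First I would treat the case where $\rho,\mu,\lambda$ additionally belong to $C^{0,1}(\overline\Omega)$. As already noted in the paragraph preceding the statement, the monotonicity then forces $\Theta_\rho = \Theta_\mu = \Theta_\lambda = 0$, hence \eqref{eq_assumption_L2_Omega} and \eqref{eq_assumption_LR_Omega} hold with $\eta = \varepsilon = 0$ and $\gamma = 1$. With the choice $\bh(\bx)=\bx$ and $\GDiss = \partial B_\RRR$, Lemma~\ref{lemma_improved_sphere} further gives $\MMM = \mmm = 1$ and $\zeta = 0$, so Theorem~\ref{theorem_simple_robin} reduces to the bound \eqref{eq_simplified_morawetz_estimate_star_shaped}.

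Specializing $\alpha_T, \alpha_N$ and computing $\Crob$ and $\chi$ as in the proofs of Corollaries~\ref{corollary_obstacle_ideal} and~\ref{corollary_obstacle_realistic} then yields directly \eqref{eq_obstacle_ideal} when $\alpha_T = \alpha_N = 1$, and the analogue of \eqref{eq_obstacle_realistic} for $\alpha_T = 1$, $\alpha_N = \sqrt{2+\lambda_{\min}/\mu_{\min}}$. The only difference from Corollary~\ref{corollary_obstacle_realistic} is that $\alpha_N$ now depends on $\lambda_{\min}/\mu_{\min}$ rather than on a constant ratio $\lambda/\mu$, which accounts for the replacement announced in the statement.

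To remove the $C^{0,1}$ assumption, I would adapt the mollification procedure of \cite[Lemma~3.4]{chaumontfrelet_moiola_spence_2023a} to construct a sequence $(\rho_n,\mu_n,\lambda_n) \in [C^{0,1}(\overline\Omega)]^3$ which (i)~still satisfies \eqref{eq_monotonicity}, (ii)~has the same essential upper and lower bounds as $(\rho,\mu,\lambda)$, and (iii)~converges a.e.\ to $(\rho,\mu,\lambda)$. Denoting by $\uuu_n \in \BH^1_{\GDir}(\Omega)$ the solution of the problem with mollified coefficients, the smooth-case bounds apply uniformly in $n$ because they depend on the coefficients only through the extremal values $\rho_{\min},\rho_{\max},\mu_{\min},\mu_{\max}$ and the ratio $\lambda_{\min}/\mu_{\min}$, all preserved by the construction. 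Combining this uniform $L^2$ bound with \eqref{eq_garding_real} and the Korn inequality \eqref{eq_korn_general} yields a uniform $\BH^1_{\GDir}(\Omega)$ bound on $\uuu_n$; extracting a weakly convergent subsequence and passing to the limit in \eqref{eq_helmholtz_weak} identifies the limit as a solution $\uuu$ of the original problem, and weak lower semicontinuity of $\|\cdot\|_{\rho,\Omega}$ preserves the estimate.

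The main obstacle is step (i) of the mollification: a naive convolution destroys the pointwise radial monotonicity. This is precisely the difficulty handled by \cite[Lemma~3.4]{chaumontfrelet_moiola_spence_2023a} through a radial rescaling performed \emph{before} the convolution, which relies on the star-shaped structure of $B_\RRR$ already assumed here. Once the mollified coefficients are in hand, everything else --- the uniform stability bound, the compactness argument, and the passage to the limit in the weak formulation --- is essentially automatic, which is why the author elected to omit the details and refer the reader to \cite[Theorem~1.2]{chaumontfrelet_moiola_spence_2023a}.
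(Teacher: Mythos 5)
Your proposal takes essentially the same route as the paper: observe that for $C^{0,1}$ coefficients the monotonicity in \eqref{eq_monotonicity} gives $\Theta_\rho=\Theta_\mu=\Theta_\lambda=0$ and hence $\gamma=1$, apply \eqref{eq_simplified_morawetz_estimate_star_shaped} with the indicated Robin coefficients to recover \eqref{eq_obstacle_ideal} and \eqref{eq_obstacle_realistic}, and remove the Lipschitz assumption by the monotonicity-preserving mollification of \cite[Lemma~3.4]{chaumontfrelet_moiola_spence_2023a} followed by a standard weak compactness and passage-to-the-limit argument. The paper states exactly this strategy in the paragraph preceding the corollary and deliberately omits the mollification/limit details by referring to \cite[Theorem~1.2]{chaumontfrelet_moiola_spence_2023a}, so your more explicit account is a faithful expansion of the same argument.
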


Crucially, the estimates in Corollary \ref{corollary_transmission_problem}
are robust in $\lambda$ in the sense that the bounds remain uniform in
$\lambda_{\max}/\mu_{\min}$. In particular, consider the scattering problem
by a star-shaped penetrable obstacle $D \subset \Omega$ whereby
$\rho \equiv \rho_D$, $\mu \equiv \mu_D$ and $\lambda \equiv \lambda_D$
in $D$ and
$\rho \equiv \rho_0$, $\mu \equiv \mu_0$ and $\lambda \equiv \lambda_0$
outside $D$, the estimates in Corollary \ref{corollary_transmission_problem}
are valid as long as $\rho_D \leq \rho_0$, $\mu_D \geq \mu_0$ and $\lambda_D \geq \lambda_0$.

\subsection{Almost star-shaped obstacles}

In this section, we discuss the possibility of using multipliers $\bh$ different from $\bx$.
In particular, following \cite{martinez_1999a}, our goal will be to handle ``almost
star-shaped'' obstacles by consider multipliers $\bh = \bx + \grad \phi$,
where $\grad \phi$ is sufficiently small perturbation. For the sake of simplicity,
we assume here that $\rho,\mu$ and $\lambda$ are constant. We also assume that
$\GDir \neq \emptyset$. As we will see, unfortunately, such an approach seems only
possible for ``small'' values of $\lambda/\mu$. Due to the Dirichlet boundary
condition on $\GDir$, we have
\begin{equation}
\label{eq_korn_dirichlet}
\|\grad \bv\|_{\mu,\Omega}
\leq
\LK_0\|\eps(\bv)\|_{\mu,\Omega}
\end{equation}
for all $\bv \in \BH^1_{\GDir}(\Omega)$, where the constant $\LK_0 \geq 1$ depends
on the shape of $\Omega$ and $\GDir$. See, e.g. \cite[Theorem 11.2.12]{brenner_scott_2008a}.

\begin{theorem}
Assume that $\phi \in C^{2,1}(\overline{\Omega})$ with $\phi = 0$ in a neighborhood of
$\GDiss$. Assume that $\bh \cdot \bn \leq 0$ on $\GDir$. Assume that there exists
$\varepsilon,\eta \geq 0$ with $\varepsilon+\eta < 2$, and
\begin{equation*}
\left (1+2\LK_0 + \frac{\lambda}{\mu}\right ) \|\tgrad^2\phi\|_{\tens{L}^\infty(\Omega)}
\leq
\varepsilon
\qquad
\min_{\Omega} \Delta \phi \geq -\eta.
\end{equation*}
Then estimate \eqref{theorem_simple_robin} from Theorem \ref{eq_stability_estimate} holds true.
\end{theorem}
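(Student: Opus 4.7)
The strategy is to verify that the multiplier $\bh = \bx + \grad\phi$ satisfies every hypothesis of Theorem \ref{theorem_simple_robin}, so that \eqref{eq_stability_estimate} follows at once.

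The boundary conditions in \eqref{eq_assumption_sign_boundary} are immediate: $\bh\cdot\bn\le 0$ on $\GDir$ is given by assumption, and on $\GDiss=\partial B_\ell$ the vanishing of $\phi$ in a neighborhood of $\GDiss$ forces $\grad\phi=\bzero$ there, so $\bh=\bx$, whence $\bh\times\bn=\bzero$, $\bh\cdot\bn=\ell$, and $\mmm=1$ in \eqref{eq_lower_bounds_h}. Because $\rho,\mu,\lambda$ are constant, the logarithmic derivatives $\LV_{\bh}(\cdot)$ all vanish, and one computes $\div\bh=d+\Delta\phi$ and $\tgrad\bh=\tens{I}+\tgrad^2\phi$. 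The pointwise bound $\Delta\phi\ge-\eta$ then yields \eqref{eq_assumption_L2_Omega} with constant $d-\eta$.

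The main work is establishing \eqref{eq_assumption_LR_Omega}. Substituting the formulas above in the definition of $\LR_{\bh,\Omega}$ and separating the contributions of $\tens{I}$ and $\tgrad^2\phi$ gives
\[
\LR_{\bh,\Omega}(\bv) = \int_\Omega (d-2+\Delta\phi)(2\mu|\eps(\bv)|^2 + \lambda|\div\bv|^2) - 2\Re\int_\Omega \bigl(2\mu\,\eps(\bv):(\tgrad^2\phi\,\tgrad\overline{\bv}) + \lambda(\div\bv)\,\tgrad^2\phi:(\tgrad\overline{\bv})^T\bigr).
\]
Setting $H\eq\|\tgrad^2\phi\|_{\tens{L}^\infty(\Omega)}$, the goal is to bound the perturbation beyond the leading $(d-2)(2\|\eps(\bv)\|_{\mu,\Omega}^2 + \|\div\bv\|_{\lambda,\Omega}^2)$ by $\varepsilon(2\|\eps(\bv)\|_{\mu,\Omega}^2 + \|\div\bv\|_{\lambda,\Omega}^2)$. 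The $\Delta\phi$-integral is controlled pointwise, using that $|\Delta\phi|$ is bounded by a dimensional constant times $H$. The $\mu$-perturbation is handled by Cauchy--Schwarz in Frobenius norm together with the Korn inequality \eqref{eq_korn_dirichlet}, producing the bound $4\LK_0 H\|\eps(\bv)\|_{\mu,\Omega}^2$. For the $\lambda$-perturbation, I exploit the symmetry of the Hessian to rewrite $\tgrad^2\phi:(\tgrad\overline{\bv})^T = \tgrad^2\phi:\eps(\overline{\bv})$, then apply Cauchy--Schwarz in the $\lambda$-weighted norm and a balanced Young inequality to obtain $H\|\div\bv\|_{\lambda,\Omega}^2 + H(\lambda/\mu)\|\eps(\bv)\|_{\mu,\Omega}^2$. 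Summing these three contributions and invoking the hypothesis $(1+2\LK_0+\lambda/\mu)H\le\varepsilon$ delivers exactly the required bound.

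Combined with $\varepsilon+\eta<2$, condition \eqref{eq_assumption_gamma} holds with $\gamma=(2-\eta-\varepsilon)/2>0$, and every hypothesis of Theorem \ref{theorem_simple_robin} is met, so \eqref{eq_stability_estimate} applies. The delicate step is the $\lambda$-perturbation: a direct Cauchy--Schwarz plus Korn estimate applied to $\lambda(\div\bv)\tgrad^2\phi:(\tgrad\overline{\bv})^T$ introduces a spurious factor $\LK_0(\lambda/\mu)^{1/2}$, which after Young would force a condition quadratic in $\lambda/\mu$; the symmetry-of-the-Hessian trick, which lets one replace $\tgrad\bv$ by $\eps(\bv)$ and thereby bypass Korn in this term, is precisely what produces the clean linear dependence on $\lambda/\mu$ appearing in the hypothesis.
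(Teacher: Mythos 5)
Your proof coincides with the paper's: you verify the hypotheses of Section~\ref{section_key_assumptions} using $\div\bh = d+\Delta\phi \ge d-\eta$, bound the $\mu$-part of the perturbation to $\LR_{\bh,\Omega}$ via Cauchy--Schwarz and the Dirichlet Korn inequality \eqref{eq_korn_dirichlet}, and exploit the symmetry of $\tgrad^2\phi$ to replace $(\tgrad\bv)^T$ by $\eps(\bv)$ in the $\lambda$-part before applying Young, thereby avoiding a second Korn application---precisely the paper's argument. Your closing observation that a naive Korn estimate on the $\lambda$-term would introduce a spurious $\LK_0(\lambda/\mu)^{1/2}$ and a condition quadratic in $\lambda/\mu$ correctly explains why the Hessian-symmetry step is the crux (the paper uses it but does not say why), and the only shared imprecision is that the pointwise control of the $\Delta\phi$ integral by $\|\tgrad^2\phi\|_{\tens{L}^\infty(\Omega)}$ actually carries a dimensional factor $\sqrt d$ in the Frobenius convention, which neither you nor the paper makes explicit.
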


\begin{proof}
Following the assumptions given in Section \ref{section_key_assumptions},
we need to check that
\begin{equation}
\label{tmp_ass_L2}
(d-\eta)\|\bv\|_{\rho,\Omega}^2
\leq
\int_\Omega (\div \bh) \rho |\bv|^2
\end{equation}
and
\begin{equation}
\label{tmp_ass_H1}
|\LR_{\bh,\Omega}(\bv)| \leq (d-2+\varepsilon) 
\{
2\|\eps(\bv)\|_{\mu,\Omega}^2 + \|\div \bv\|_{\lambda,\Omega}^2
\}
\end{equation}
for all $\bv \in \BH^1_{\GDir}(\Omega)$.

We easily see that \eqref{tmp_ass_L2} is valid by noting that
\begin{equation*}
\div \bh = \div \bx + \div \grad \phi = d + \Delta \phi \geq d-\eta
\end{equation*}
by assumption.

To establish \eqref{tmp_ass_H1}, we first follow the computations
performed for $\bh = \bx$ in Section \ref{section_simple_robin} to see that
\begin{align*}
\LR_{\bh,\Omega}(\bv)
&\eq
\int_\Omega \left \{
(d-2 + \Delta \phi)2\mu|\eps(\bv)|^2
+
(d-2 + \Delta \phi)\lambda|\div \bv|^2
\right \}
\\
&-2\Re\int_\Omega \left \{
2 \mu \eps(\bv) : (\tgrad^2\phi \tgrad \overline{\bv})
+
\lambda (\div \bv) \tgrad^2\phi : (\tgrad \overline{\bv})^T
\right \}
\end{align*}
Then, since $\tgrad^2\phi$ is symmetric, we have
\begin{equation*}
|\tgrad^2 \phi:(\tgrad \overline{\bv})^T|
=
2|\tgrad^2 \phi:\eps(\overline{\bv})|
\leq
2\|\tgrad^2{\phi}\|_{\tens{L}^\infty(\Omega)}|\eps(\bv)|,
\end{equation*}
and therefore
\begin{equation*}
2 \Re \lambda (\div \bv) \tgrad^2 \phi : (\tgrad \overline{\bv})^T
\leq
4\|\tgrad^2{\phi}\|_{\tens{L}^\infty(\Omega)}\lambda|\div \bv||\eps(\bv)|
\leq
2\|\tgrad^2{\phi}\|_{\tens{L}^\infty(\Omega)}\left (\lambda|\div \bv|^2
+
\frac{\lambda}{\mu} \mu |\eps(\bv)|^2\right )
\end{equation*}
On the other hand, since
\begin{equation*}
|\tgrad^2\phi\tgrad \overline{\bv}|
\leq
\|\tgrad^2{\phi}\|_{\tens{L}^\infty(\Omega)}|\tgrad \overline{\bv}|,
\end{equation*}
we obtain
\begin{equation*}
2 \Re \int_\Omega 2\mu\eps(\bv):\tgrad^2\phi\tgrad \overline{\bv}
\leq
4\|\tgrad^2{\phi}\|_{\tens{L}^\infty(\Omega)}\|\eps(\bv)\|_{\mu,\Omega}\|\tgrad \bv\|_{\mu,\Omega}
\leq
2\LK_0\|\tgrad^2{\phi}\|_{\tens{L}^\infty(\Omega)}\cdot2\|\eps(\bv)\|_{\mu,\Omega}^2,
\end{equation*}
where $\LK_0$ is the Korn constant from \eqref{eq_korn_dirichlet}.
It follows that
\begin{align*}
\LR_{\bh,\Omega}(\bv)
&\leq
\left (d-2+\left (1+2\LK_0+\frac{\lambda}{\mu}\right )\|\tgrad^2{\phi}\|_{\tens{L}^\infty(\Omega)}\right )
2\|\eps(\bv)\|_{\mu,\Omega}^2
\\
&+
\left (d-2+3\|\tgrad^2{\phi}\|_{\tens{L}^\infty(\Omega)}\right )
\|\div \bv\|_{\lambda,\Omega}^2,
\end{align*}
and the conclusion follows since $3 \leq 1+2\LK_0 \leq 1 + 2\LK_0 + \lambda/\mu$.
\end{proof}

\subsection{General Robin boundary}

In all the examples given above, the estimate of Theorem \ref{theorem_general_robin}
given in \eqref{eq_general_robin} holds true if relax the assumption that $\AAA$
satisfies \eqref{eq_assumption_AAA} and only assume that $\GDiss$ is the boundary
of a closed set star-shaped with respect to $\bzero$ and that Assumption
\ref{assumption_elliptic_regularity} is satisfied. The resulting estimate is
robust in $\lambda_{\max}/\mu_{\min}$ if $\alpha_{\max}$ and $\alpha_{\min}$
do not depend on $\lambda$ and $\mu$.

\bibliographystyle{amsplain}
\bibliography{bibliography}

\appendix

\section{Stability estimates using the Green's function}
\label{appendix_fundamental_solution}

In this section, we assume that $\rho$, $\mu$ and $\lambda$
are constant real numbers. Recall then the (constant) shear and
pressure wavespeeds are defined in \eqref{eq_definition_vels} as
\begin{equation*}
\velsV \eq \sqrt{\frac{\mu}{\rho}}
\qquad
\velpV \eq \sqrt{\frac{2\mu+\lambda}{\rho}}
\end{equation*}
with the corresponding wavenumbers given by
$\Ks \eq \omega/\velsV$ and $\Kp \eq \omega/\velpV$.

The Green's function associated with the PDE
\begin{equation}
\label{eq_appendix_elasticity}
-\omega^2 \rho \uuu-\div \sig(\uuu) = \rho \fff \text{ in } \mathbb R^3
\end{equation}
completed by the Sommerfeld radiation condition at infinity
is given, e.g., in \cite[Equation (2.10)]{brown_gallistl_2023a}.
Specifically, we have
\begin{equation*}
\uuu = \tens{G} \star \fff
\end{equation*}
with
\begin{equation}
\label{eq_green_elasticity}
\tens{G}(\by)
\eq
\frac{1}{4\pi\velsV^2}
\left (
\frac{e^{i\Ks |\by|}}{|\by|}
\tens{I}
+
\frac{1}{\Ks^2}
\tgrad^2
\left (
\frac{e^{i\Ks |\by|}}{|\by|} - \frac{e^{i\Kp |\by|}}{|\by|}
\right )
\right ).
\end{equation}
Notice that as compared to \cite{brown_gallistl_2023a},
our wavenumbers $\Ks$ and $\Kp$ (these are respectively called
$k_1$ and $k_2$ in \cite{brown_gallistl_2023a}) are multiplied
by $\sqrt{\rho}$, which is due to the fact that the case $\rho = 1$
is considered in \cite{brown_gallistl_2023a}.
This is fully consistant, as can be seen by dividing
both sides of \eqref{eq_appendix_elasticity} by $\rho$
and redefining $\mu$ and $\lambda$ appropriately.

It is convenient to introduce the notation
\begin{equation*}
G^{\rm A}(\by)
\eq
\frac{1}{4\pi\velsV^2} \frac{e^{i\Ks |\by|}}{|\by|},
\qquad
G^{\rm E}(\by)
\eq
\frac{1}{4\pi} 
\left (
\frac{e^{i\Ks |\by|}}{|\by|}
-
\frac{e^{i\Kp |\by|}}{|\by|},
\right ),
\end{equation*}
in order to write
\begin{equation}
\label{eq_green_split}
\tens{G}
\eq
G^{\rm A} \tens{I} + \frac{1}{\omega^2} \tgrad^2 G^{\rm E}.
\end{equation}
Crucially, $G^{\rm A}$ is the Green's function associated with the acoustic
(scalar) Helmholtz equation. In particular \cite{galkowski_spence_wunsch_2020a},
for $\RRR > 0$ and $f \in L^2(\R^3)$ with $\supp f \subset B_\RRR$, we have
\begin{equation}
\label{eq_stability_green_scalar}
\omega^2 \|G^{\rm A} \star f\|_{\rho,B_\RRR}
\leq
\ks \|f\|_{\rho,B_\RRR},
\end{equation}
with, as above, $\ks \eq \Ks\ell$. The other part of $\tens{G}$ requires
a bit of extra work. Our proof closely follows \cite[Lemma A.2]{brown_gallistl_2023a},
but we take special care to track the dependency on $\lambda/\mu$. Our improvements
over \cite{brown_gallistl_2023a} are very minor and far from complicated, but for
the sake of completeness, we include them here.

\begin{lemma}[Elastic part of  Green's function]
\label{lemma_green_elasticity}
Let $\RRR > 0$ and $f \in L^2(\R^3)$ with $\supp f \subset B_\RRR$. Then, we have
\begin{equation}
\label{eq_green_elsaticity}
\|\tgrad^2(G^{\rm E} \star f)\|_{B_\RRR}
\leq
(2+8\Ks\RRR) \|f\|_{B_\RRR}.
\end{equation}
\end{lemma}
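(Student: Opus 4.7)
The plan is to leverage the cancellation structure of $G^{\rm E} = \Phi_{\Ks} - \Phi_{\Kp}$, where $\Phi_k(\by) \eq e^{ik|\by|}/(4\pi|\by|)$ denotes the outgoing scalar Helmholtz fundamental solution at wavenumber $k$. Setting $u \eq G^{\rm E} \star f = u_{\rm S} - u_{\rm P}$ with $u_{\rm S} \eq \Phi_{\Ks} \star f$ and $u_{\rm P} \eq \Phi_{\Kp} \star f$, the identities $(-\Delta - k^2)\Phi_k = \delta$ yield by subtraction the distributional relation
\begin{equation*}
-\Delta u = \Ks^2 u_{\rm S} - \Kp^2 u_{\rm P},
\end{equation*}
in which the Dirac masses coming from each Laplacian cancel exactly. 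Consequently $\Delta u$ is an honest $L^2_{\rm loc}$ function, and a Taylor expansion of $e^{ikr}/r$ at $r=0$ shows that the kernel $\tgrad^2 G^{\rm E}$ has at worst a $|\by|^{-1}$ singularity at the origin, even though $\tgrad^2\Phi_k$ individually carries both a Dirac mass and an order-$|\by|^{-3}$ principal-value kernel.

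To pass from $\Delta u$ to $\tgrad^2 u$ I would use the Plancherel-type identity $\|\tgrad^2 w\|_{\R^3} = \|\Delta w\|_{\R^3}$, valid for $w \in H^2(\R^3)$. Since $u$ is not globally $H^2$ (the outgoing Helmholtz pieces decay only like $|\by|^{-1}$), I localize with a cutoff $\chi \in C_c^\infty(B_{2\RRR})$ satisfying $\chi \equiv 1$ on $B_\RRR$, $\|\grad\chi\|_\infty \leq C\RRR^{-1}$, and $\|\tgrad^2\chi\|_\infty \leq C\RRR^{-2}$. Plancherel applied to $w \eq \chi u$ gives $\|\tgrad^2 u\|_{B_\RRR} \leq \|\Delta(\chi u)\|_{\R^3}$, and expanding $\Delta(\chi u) = \chi\,\Delta u + 2\grad\chi\cdot\grad u + (\Delta\chi)\,u$ reduces the problem to bounding $\|\Delta u\|_{B_{2\RRR}}$ together with lower-order cutoff-error terms involving $\grad u$ and $u$.

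For the leading piece, the scalar Helmholtz stability estimate \eqref{eq_stability_green_scalar}, which after unravelling the $\rho$-weighted notation reads $k\|\Phi_k\star f\|_{B_{2\RRR}} \leq 2\RRR\|f\|_{B_\RRR}$ for $k \in \{\Ks,\Kp\}$, yields
\begin{equation*}
\|\Delta u\|_{B_{2\RRR}}
\leq \Ks^2\|u_{\rm S}\|_{B_{2\RRR}} + \Kp^2\|u_{\rm P}\|_{B_{2\RRR}}
\leq 2\RRR(\Ks+\Kp)\|f\|_{B_\RRR}
\leq 4\Ks\RRR\,\|f\|_{B_\RRR},
\end{equation*}
where the last inequality uses $\Kp \leq \Ks$ (which follows from $\lambda \geq 0$). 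Crucially, the cutoff-error contributions from $\grad u$ and $u$ must \emph{not} be controlled via \eqref{eq_stability_green_scalar} applied to $u_{\rm P}$, since the resulting bound $\|u_{\rm P}\|_{B_{2\RRR}} \leq 2\RRR\Kp^{-1}\|f\|$ blows up as $\lambda\to+\infty$. Instead I would bound these terms directly from the kernel, exploiting that both $G^{\rm E}$ and $\grad G^{\rm E}$ are, after the cancellation, locally square-integrable with $\|\cdot\|_{L^2(B_{3\RRR})}$ norms controlled by powers of $\RRR$ and $\Ks$ only (no $\Kp^{-1}$); a Cauchy--Schwarz convolution estimate then yields $\|u\|_{B_{2\RRR}} \lesssim \RRR^2\|f\|_{B_\RRR}$ and $\|\grad u\|_{B_{2\RRR}} \lesssim (\RRR^{3/2}\Ks^{1/2}+\Ks\RRR^2)\|f\|_{B_\RRR}$. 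These combine with the cutoff weights $\RRR^{-2}$ and $\RRR^{-1}$ to produce an $O(1) + O(\Ks\RRR)$ correction that absorbs into the final bound.

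The main obstacle is purely quantitative: optimizing every numerical constant arising from Plancherel, the cutoff, the scalar stability estimate, and the kernel Cauchy--Schwarz in order to reach the clean value $2 + 8\Ks\RRR$, and simultaneously verifying that no hidden factor of $\Kp^{-1}$ sneaks in. The structural argument otherwise parallels \cite[Lemma~A.2]{brown_gallistl_2023a}, and the small improvement flagged in the paragraph preceding the statement is precisely the replacement of the Helmholtz-based bound on $\|u\|$ and $\|\grad u\|$ by the kernel-based one described above, which is what delivers uniformity in $\lambda_{\max}/\mu_{\min}$.
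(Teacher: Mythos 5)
Your structural observations about $G^{\rm E}$ are correct, and your route is genuinely different from the paper's, but it has a gap that prevents it from delivering the stated constant.

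The paper truncates the \emph{kernel}, not the solution. It replaces $G^{\rm E}$ by $M \cdot G^{\rm E}$, where $M(\by) = \eta(|\by|)$ with $\eta \equiv 1$ on $[0,2\RRR]$, $\eta \equiv 0$ on $[4\RRR,+\infty)$, and $\eta$ affine in between, so that $|\eta'| \leq 1/(2\RRR)$. Since $\supp f \subset B_\RRR$, the convolution $\tw \eq (M \cdot G^{\rm E}) \star f$ agrees with $G^{\rm E}\star f$ on $B_\RRR$, hence $\|\tgrad^2(G^{\rm E}\star f)\|_{B_\RRR} \leq \|\Delta\tw\|_{\R^3}$. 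The decisive step is then an $L^\infty$ Fourier-multiplier bound on the compactly supported kernel: using the radial Fourier formula and one integration by parts,
\begin{equation*}
|\bxi|^2\LF(M\cdot G^{\rm E})(\bxi)
=
\int_0^{+\infty}\partial_r\bigl[\eta(r)\bigl(e^{i\Ks r}-e^{i\Kp r}\bigr)\bigr]\cos(|\bxi|r)\,dr,
\end{equation*}
so the symbol is bounded pointwise by $\int_0^{4\RRR}\bigl(2|\eta'(r)|+(\Ks+\Kp)|\eta(r)|\bigr)\,dr \leq 2 + 4\RRR(\Ks+\Kp) \leq 2 + 8\Ks\RRR$. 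The cancellation $e^{i\Ks r}-e^{i\Kp r}=O(r)$, which you also identified, is what makes this possible; the cutoff enters the estimate only through its total variation $\int|\eta'|\,dr = 1$, producing the additive $2$ exactly.

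Your proposal instead truncates the \emph{solution}, expanding $\Delta(\chi u) = \chi\Delta u + 2\grad\chi\cdot\grad u + (\Delta\chi)u$. The leading term is handled well via $-\Delta u = \Ks^2 u_{\rm S} - \Kp^2 u_{\rm P}$ and the scalar resolvent bound, and your insistence on bounding the lower-order pieces without $\Kp^{-1}$ is the right instinct for $\lambda$-robustness. The gap is the cutoff error. The term $\|(\Delta\chi)u\|$ contributes an $\omega$-independent additive constant of order $\|\Delta\chi\|_\infty\cdot\|G^{\rm E}\|_{L^1}$, and a direct computation (using $|G^{\rm E}(\by)| \leq \min(\Ks,|\by|^{-1})/(2\pi)$) gives $\|G^{\rm E}\|_{L^1(B_{3\RRR})}$ of order $\RRR^2$ with a prefactor near ten, while $\|\Delta\chi\|_\infty \gtrsim \RRR^{-2}$ for any $C_c^\infty$ cutoff transitioning over a width $\RRR$; the product is well above $2$. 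Widening the transition region to reduce this forces $\|\grad\chi\cdot\grad u\|$ to grow, since $\|\grad G^{\rm E}\|_{L^1}$ grows quadratically in the cutoff radius while $\|\grad\chi\|_\infty$ only decays linearly, so no choice of $\chi$ recovers the constant. Your route, if completed, proves a bound $(C_0 + C_1\Ks\RRR)\|f\|_{B_\RRR}$ with $C_0,C_1$ strictly larger than $2,8$ — still $\lambda$-uniform and qualitatively adequate, but not the lemma as stated. The idea you are missing is not the cancellation structure, which you have, but that truncating the kernel rather than the solution collapses all cutoff error into the single total-variation integral above, which is exactly $1$.
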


\begin{proof}
Here, for $v \in L^2(\R^3)$, we let
\begin{equation*}
\LF(v)(\bxi) = \int_{\R^3} v(\bx) e^{i\bxi\cdot\bx}d\bx
\end{equation*}
denote the Fourier transform of $v$. With this convention, we have
\begin{equation*}
\|v\|_{\R^3}^2 = \frac{1}{(2\pi)^3} \|\LF(v)\|_{R^3}^2.
\end{equation*}
and
\begin{equation*}
\LF(\phi \star v) = \LF(\phi)\cdot\LF(v)
\end{equation*}
for all $\phi \in L^2(\R^3)$. The notation
\begin{equation*}
g(r)
\eq
\frac{1}{4\pi}
\left (
\frac{e^{i \Ks r}}{r}-\frac{e^{i \Kp r}}{r}
\right )
\end{equation*}
will also be useful to write $G^{\rm E}(\by) = g(|\by|)$.

The proof relies on a modified Green's functions $M \cdot G^{\rm E}$,
where $M(\by) = \eta(|\by|)$, with a cutoff function $\eta$ defined as follows.
If $r \leq 2\RRR$ we let $\eta(r) = 1$ where as we set $\eta(r) = 0$ if $r \geq 4\RRR$.
The cutoff changes linearly in the transition region, namely
$\eta(r) = (4\RRR-r)/(2\RRR)$ for $2\RRR < r < 4\RRR$.
$\eta$ is Lipschitz continuous, $|\eta(r)| \leq 1$
and $|\eta'(r)| \leq 1/(2\RRR)$ for all $r \geq 0$.

Let us consider the modified solution $\tw \eq (M \cdot G^{\rm E}) \star f$.
Since $f$ is supported  in $B_\ell$, if $\bx \in B_\ell$, we have
\begin{equation*}
\tw(\bx)
=
\int_{B_\ell} \eta(|\bx-\by|) g(|\bx-\by|) f(\by) d\by
=
\int_{B_\ell} g(|\bx-\by|) f(\by) d\by
=
(G^{\rm E} \star f)(\bx)
\end{equation*}
as $|\bx-\by| \leq |\bx|+|\by| \leq 2\ell$ whenever $\bx,\by \in B_\ell$. It follows that
\begin{equation*}
\|\tgrad^2(G^{\rm E} \star f)\|_{B_\ell}
=
\|\tgrad^2 \tw\|_{B_\ell}
\leq
\|\tgrad^2 \tw\|_{\R^3}
=
\|\Delta \tw\|_{\R^3}.
\end{equation*}
where the last identity follows from two integrations by parts.
We estimate the last norm using the Fourier transform. Indeed, since
\begin{equation*}
\LF(\tw) = 
|\bxi|^2 \LF(\tw) = |\bxi|^2 \LF(M \cdot G^{\rm E}) \LF(f),
\end{equation*}
we infer that
\begin{equation*}
\| |\bxi|^2 \LF(\tw)\|_{\R^3}^2
\leq
\LM \|f\|_{B_\ell},
\qquad
\LM
\eq
\sup_{\bxi \in \R^d} |\bxi|^2|\LF(M \cdot G^{\rm E})(\bxi)|,
\end{equation*}
and it remains to estimate $\LM$. Since $(M \cdot G^{\rm E})(\by) = (\eta\cdot g)(|\by|)$,
we can classically express its Fourier transform as follows
\begin{equation*}
\LF(M \cdot G^{\rm E})(\bxi)
=
4\pi \int_0^{+\infty}
\eta(r)g(r) r^2 \frac{\sin(|\bxi|r)}{|\bxi|r} dr.
\end{equation*}
The Fourier transform further simplifies into
\begin{align*}
|\bxi|^2\LF(M \cdot G^{\rm E})(\bxi)
&=
\int_0^{+\infty}
\eta(r)(e^{i\Ks r}-e^{i\Kp r}) |\bxi| \sin(|\bxi|r) dr
\\
&=
\int_0^{+\infty}
\partial_r (\eta(r)(e^{i\Ks r}-e^{i\Kp r})) \cos(|\bxi|r) dr
\end{align*}
upon recalling the definion of $g$. Here, we can conclude the proof with
\begin{equation*}
\LM
\leq
\int_0^{4\RRR} |\partial_r (\eta(r)(e^{i\Ks r}-e^{i\Kp r}))|
\leq
4\RRR(\Ks + \Kp) + 2
\leq
2+8\Ks\RRR.
\end{equation*}
\end{proof}

\begin{theorem}[Stability]
Fix $\RRR > 0$ and $\fff \in \BL^2(\R^3)$ with $\supp \fff \subset B_\RRR$.
For $\uuu \eq \tens{G} \star \fff$, we have
\begin{equation*}
\omega^2\|\uuu\|_{\rho,B_\RRR} \leq \left (4 + 17\ks\right )\|\fff\|_{\rho,B_\RRR}.
\end{equation*}
\end{theorem}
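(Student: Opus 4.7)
The plan is to decompose $\uuu$ via the splitting \eqref{eq_green_split} as $\uuu = (G^{\rm A}\tens{I}) \star \fff + \omega^{-2}(\tgrad^2 G^{\rm E}) \star \fff$ and estimate each piece separately. The first term is diagonal in the vector index, so it reduces to $d$ scalar convolutions with the acoustic Helmholtz Green's function and can be bounded componentwise by \eqref{eq_stability_green_scalar}. The second term is a matrix--vector convolution; a Cauchy--Schwarz bound on the row sum combined with Lemma \ref{lemma_green_elasticity} applied componentwise gives the required estimate up to a factor $\sqrt{d}$.

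For the first term, I would write $((G^{\rm A}\tens{I}) \star \fff)_i = G^{\rm A} \star \fff_i$ and note that $\supp \fff_i \subset B_\RRR$ for each $i$. Since $\rho$ is constant, the $\rho$-weight in $\|\cdot\|_{\rho,B_\RRR}$ cancels on both sides of \eqref{eq_stability_green_scalar}, so applying that scalar estimate componentwise, squaring, and summing in $i$ yields
\[
\omega^2 \|(G^{\rm A}\tens{I}) \star \fff\|_{\rho,B_\RRR} \leq \ks \|\fff\|_{\rho,B_\RRR},
\]
with no dimensional loss, as the action is diagonal.

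For the second term, componentwise $((\tgrad^2 G^{\rm E}) \star \fff)_i = \sum_j \partial_i\partial_j(G^{\rm E} \star \fff_j)$. The Cauchy--Schwarz bound $|\sum_j a_{ij}|^2 \leq d \sum_j |a_{ij}|^2$ gives $|((\tgrad^2 G^{\rm E}) \star \fff)_i|^2 \leq d \sum_j |\partial_i\partial_j(G^{\rm E} \star \fff_j)|^2$. Integrating over $B_\RRR$, summing over $i$, and recognizing the resulting double sum as $d \sum_j |\tgrad^2(G^{\rm E} \star \fff_j)|^2$, Lemma \ref{lemma_green_elasticity} applied to each scalar $\fff_j$ produces
\[
\|(\tgrad^2 G^{\rm E}) \star \fff\|_{\BL^2(B_\RRR)} \leq \sqrt{d}\,(2+8\ks)\|\fff\|_{\BL^2(B_\RRR)},
\]
and this becomes the $\rho$-weighted estimate after multiplying through by $\sqrt{\rho}$.

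To conclude, I would combine the two pieces by the triangle inequality, specialize to $d=3$, and use the crude bound $\sqrt{3} \leq 2$ (so $2\sqrt{3} \leq 4$ and $1 + 8\sqrt{3} \leq 17$) to obtain
\[
\omega^2\|\uuu\|_{\rho,B_\RRR} \leq \ks\|\fff\|_{\rho,B_\RRR} + \sqrt{3}(2+8\ks)\|\fff\|_{\rho,B_\RRR} \leq (4 + 17\ks)\|\fff\|_{\rho,B_\RRR},
\]
which is the claim. There is no substantive obstacle: the analytic content has been absorbed into Lemma \ref{lemma_green_elasticity} (modified Green's function and Fourier/Plancherel argument), and what remains is merely bookkeeping of the $\sqrt{d}$ factor generated by Cauchy--Schwarz and the arithmetic producing the constants $4$ and $17$.
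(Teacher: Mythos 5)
Your proof is correct and follows essentially the same route as the paper: split $\uuu$ via \eqref{eq_green_split}, treat the acoustic part componentwise using \eqref{eq_stability_green_scalar}, bound the elastic part via Cauchy--Schwarz on the row sum of the Hessian and then invoke Lemma~\ref{lemma_green_elasticity} componentwise. The only cosmetic divergence is bookkeeping: the paper rounds the Cauchy--Schwarz factor $3$ up to $4$ so that the square root is exactly $2$ (yielding precisely $4+16\ks$ for the elastic piece and then $4+17\ks$ total), whereas you keep the sharper $\sqrt{3}$ and absorb it into the final numerical bounds $2\sqrt{3}\le 4$, $1+8\sqrt{3}\le 17$; both arrive at the stated constants.
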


\begin{proof}
Recalling \eqref{eq_green_split}, we may split $\uuu$ as
\begin{equation*}
\uuu = (G^{\rm A} \tens{I})\star \fff + \frac{1}{\omega^2} (\tgrad^2 G^{\rm E}) \star \fff.
\end{equation*}
On the one hand, each component of $\uuu^{\rm A}_\ell$ of 
$\uuu^{\rm A} \eq G^{\rm A}\tens{I} \star \fff$ independently solves
a scalar Helmholtz problem with wavenumber $\Ks$ and right-hand side $\fff_\ell$,
i.e. $\uuu^{\rm A}_\ell = G^{\rm A} \star \fff_\ell$, $1 \leq \ell \leq 3$.
Therefore, it follows from \eqref{eq_stability_green_scalar} that
\begin{equation*}
\omega^2 \|(G^{\rm A}\tens{I}) \star \fff\|_{\rho,B_\RRR} \leq \ks \|\fff\|_{\rho,B_\RRR}.
\end{equation*}

For the other part, we rely on Lemma \ref{lemma_green_elasticity}.
Let us set $\uuu^{\rm E} \eq (\tgrad^2 G^{\rm E}) \star \fff$, so that
\begin{equation*}
\uuu^{\rm E}_j
=
\sum_{\ell=1}^3(\partial_{j\ell} G^{\rm E}) \star \fff_\ell
=
\sum_{\ell=1}^3 \partial_{j\ell} (G^{\rm E} \star \fff_\ell),
\end{equation*}
and
\begin{multline*}
\|(\tgrad^2 G^{\rm E})\star\fff\|_{B_\RRR}^2
=
\sum_{j=1}^3 \|\uuu^{\rm E}_j\|_{B_\RRR}^2
\leq
\sum_{j=1}^3
\left \|
\sum_{\ell=1}^3 \partial_{j\ell} (G^{\rm E} \star \fff_\ell)
\right \|_{B_\RRR}^2
\\
\leq
3
\sum_{j,\ell=1}^3
\|\partial_{j\ell} (G^{\rm E} \star \fff_\ell)\|_{B_\RRR}^2
=
3
\|\tgrad^2(G^{\rm E} \star \fff_\ell)\|_{B_\RRR}^2
\leq
4
\|\tgrad^2(G^{\rm E} \star \fff_\ell)\|_{B_\RRR}^2.
\end{multline*}
Since $\rho$ is constant, it follows from \eqref{eq_green_elasticity} that
\begin{equation*}
\|(\tgrad^2 G^{\rm E})\star\fff\|_{\rho,B_\RRR}
\leq
2
\|(\tgrad^2 G^{\rm E}) \star \fff\|_{\rho,B_\RRR}
\leq
(4 + 16\Ks\RRR)\|\fff\|_{\rho,B_\RRR},
\end{equation*}
and we easily condlude the proof since
\begin{equation*}
\omega^2\|\uuu\|_{\rho,B_\RRR}
\leq
\omega^2\|(G^{\rm A}\tens{I})\star \fff\|_{\rho,B_\RRR}
+
\|(\tgrad^2G^{\rm E})\star \fff\|_{\rho,B_\RRR}.
\end{equation*}
\end{proof}

\section{Elliptic regularity for the elastostatic problem}
\label{appendix_elliptic_regularity}

The goal of this section is to show that Assumption
\ref{assumption_elliptic_regularity} does hold true
in relevant situations.

\subsection{Validity under elliptic regularity}

We start by showing that Assumption \ref{assumption_elliptic_regularity}
holds true under the more natural elliptic regularity assumption on
the elastostatic problem below.

\begin{assumption}[Elliptic regularity]
\label{assumption_natural}
Assume that $\bw \in \BH^1_{\GDir}(\Omega)$ is such that
\begin{equation}
\label{eq_elastostatic_appendix}
(\sig(\bw),\eps(\bv))_\Omega = \omega^2 (\rho\bg,\bv)_\Omega + \omega(\AAA\bk,\bv)_{\GDiss}
\end{equation}
for all $\bv \in \BH^1_{\GDir}(\Omega)$ with $\bg \in \BL^2(\Omega)$
and $\bk \in \BH^1_{\GDir}(\Omega)$. Then there exist a neighborhood
$\widetilde \Omega \subset \Omega$ of $\GDiss$ such that, $\bw \in \BH^2(\widetilde \Omega)$
and
\begin{equation}
\label{eq_regularity_estimate}
\mu_{\min}^{1/2}|\bw|_{\BH^2(\widetilde \Omega)}
\leq
\Cell
\frac{1+\ks}{\ell}
\left (
\omega \|\bg\|_{\rho,\Omega} + \omega \|\bk\|_{\rho,\Omega} + \|\eps(\bk)\|_{\mu,\Omega}
\right ),
\end{equation}
for some $\Cell > 0$.
\end{assumption}

\begin{lemma}[Validity of the Assumption \ref{assumption_elliptic_regularity}]
If Assumption \ref{assumption_natural} is valid, then Assumption
\ref{assumption_elliptic_regularity} holds true with
\begin{equation*}
\Creg \leq C (1+\Cell),
\end{equation*}
where $C$ only depends on $\Omega$ and on the neighborhood $\widetilde \Omega$
in Assumption \ref{assumption_natural}. Specifically, if $\bw \in \BH^1_{\GDir}(\Omega)$
solves \eqref{eq_elastostatic_appendix}, then, $\grad \bw \in \tens{L}^2(\GDiss)$, and we have
\begin{equation}
\label{eq_assumption_verified}
\ell \|\grad \bw\|_{\mu,\GDiss}^2
\leq C (1+\Cell)
(1+\ks)
\left \{
\omega^2\|\bg\|_{\rho,\Omega}
+
\omega^2\|\bk\|_{\rho,\Omega} + \|\eps(\bk)\|_{\mu,\Omega}^2
+
\|\eps(\bw)\|_{\mu,\Omega}^2
\right \}
\end{equation}
\end{lemma}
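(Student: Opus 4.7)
The plan is to combine a multiplicative trace inequality on the collar neighborhood $\widetilde\Omega$ of $\GDiss$ supplied by Assumption~\ref{assumption_natural} with the local $H^2$-regularity itself, in order to convert the surface control of $\grad\bw$ on $\GDiss$ into interior control of $\grad\bw$ and $\tgrad^2\bw$ on $\widetilde\Omega$. The $\tgrad^2\bw$ piece is bounded directly by Assumption~\ref{assumption_natural}, producing the $\Cell$ factor; the $\grad\bw$ piece will be bounded through the Korn inequality \eqref{eq_korn_general} together with the elastostatic energy identity obtained by testing \eqref{eq_elastostatic_appendix} with $\bv=\bw$.

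Concretely, I first invoke the multiplicative trace inequality on $\widetilde\Omega$, whose shape depends only on $\hGDiss$ after rescaling by $\ell$ since $\GDiss$ is $C^{2,1}$, applied componentwise to $\grad\bw$ and with the bounded weight $\mu$ folded into the constant. Multiplying by $\ell$ and using Young's inequality on the cross-term gives
\[
\ell\,\|\grad\bw\|_{\mu,\GDiss}^2 \leq C\left(\|\grad\bw\|_{\mu,\widetilde\Omega}^2 + \ell^2\mu_{\min}\|\tgrad^2\bw\|_{\widetilde\Omega}^2\right).
\]
Squaring Assumption~\ref{assumption_natural} and using $(a+b+c)^2\leq 3(a^2+b^2+c^2)$ bounds the second term above by $3\,\Cell^2(1+\ks)^2\left(\omega^2\|\bg\|_{\rho,\Omega}^2+\omega^2\|\bk\|_{\rho,\Omega}^2+\|\eps(\bk)\|_{\mu,\Omega}^2\right)$, which is already of the form required by the right-hand side of \eqref{eq_assumption_verified}.

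The main obstacle is to bound $\|\grad\bw\|_{\mu,\widetilde\Omega}^2\leq\|\grad\bw\|_{\mu,\Omega}^2$ without introducing terms that are absent from the right-hand side of \eqref{eq_assumption_verified}. I plan to apply \eqref{eq_korn_general} to obtain
\[
\|\grad\bw\|_{\mu,\Omega}^2 \leq \LK^2(1+\ks^{-1})^2\left(\omega^2\|\bw\|_{\rho,\Omega}^2 + \|\eps(\bw)\|_{\mu,\Omega}^2\right),
\]
and then to absorb the unwanted $\omega^2\|\bw\|_{\rho,\Omega}^2$ via the elastostatic energy identity
\[
2\|\eps(\bw)\|_{\mu,\Omega}^2 + \|\div\bw\|_{\lambda,\Omega}^2 = \omega^2(\rho\bg,\bw)_\Omega + \omega(\AAA\bk,\bw)_{\GDiss}
\]
obtained by testing with $\bv=\bw$, followed by Cauchy--Schwarz, a multiplicative trace inequality for $\bw|_{\GDiss}$, and Young's inequality, producing a bound of the form $\omega^2\|\bw\|_{\rho,\Omega}^2 \lesssim (1+\ks)\left(\omega^2\|\bg\|_{\rho,\Omega}^2+\omega^2\|\bk\|_{\rho,\Omega}^2+\|\eps(\bk)\|_{\mu,\Omega}^2+\|\eps(\bw)\|_{\mu,\Omega}^2\right)$. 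Combining the three estimates and consolidating $\ks$-dependent factors into a common $(1+\ks)$ prefactor then yields \eqref{eq_assumption_verified}. The delicate step is really the last absorption: it looks circular at first sight, and its resolution hinges on the purely elastostatic structure of \eqref{eq_elastostatic_appendix} rather than any Helmholtz-type coercivity, together with the fact that the multiplicative trace applied to $\bw$ itself introduces $\grad\bw$ only at the expense of one further Young inequality, which ultimately closes the bookkeeping.
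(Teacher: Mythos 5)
Your general architecture — multiplicative trace inequality on the collar $\widetilde\Omega$, then control of the $\tgrad^2\bw$ piece by Assumption~\ref{assumption_natural} and the $\grad\bw$ piece by Korn — matches the paper, but two of your concrete steps are wrong.

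First, you split the cross-term $\ell\,\|\grad\bw\|_{\mu,\widetilde\Omega}\,\mu_{\min}^{1/2}|\bw|_{H^2(\widetilde\Omega)}$ by Young with weight $1$, producing the term $\ell^2\mu_{\min}\|\tgrad^2\bw\|_{\widetilde\Omega}^2$. After squaring \eqref{eq_regularity_estimate} this carries a factor $(1+\ks)^2$, not $(1+\ks)$, and there is no way to hide the extra $(1+\ks)$ inside a $\ks$-independent constant $C$. The paper instead performs Young with a $\ks$-dependent weight,
$\ell\,a\,b\le \ks\,a^2 + \tfrac{\ell^2}{4\ks}\,b^2$,
so that the $|\bw|_{H^2(\widetilde\Omega)}^2$ term picks up the factor $\ell^2/\ks$; combined with the $(1+\ks)^2/\ell^2$ from \eqref{eq_regularity_estimate}, this yields $(1+\ks)^2/\ks\lesssim(1+\ks)$. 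Your unweighted Young loses exactly that $\ks^{-1}$.

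Second, the absorption step you flag as ``delicate'' does not in fact close. You invoke \eqref{eq_korn_general}, which introduces $\omega^2\|\bw\|_{\rho,\Omega}^2$, and claim to recover it from the elastostatic identity
$2\|\eps(\bw)\|_{\mu,\Omega}^2 + \|\div\bw\|_{\lambda,\Omega}^2 = \omega^2\Re(\rho\bg,\bw)_\Omega + \omega\Re(\AAA\bk,\bw)_{\GDiss}$.
But the elastostatic form has no zero-order term: the left-hand side controls only $\eps(\bw)$, and $\bw$ itself appears on the right. Cauchy--Schwarz and Young give at best a bound of $\|\eps(\bw)\|_{\mu,\Omega}^2$ \emph{in terms of} $\|\bw\|_{\rho,\Omega}$, which is the reverse of what you need. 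To bound $\omega^2\|\bw\|_{\rho,\Omega}^2$ by $\|\eps(\bw)\|_{\mu,\Omega}^2$ you need a Poincar\'e-type inequality (hence $\GDir\ne\emptyset$ or some normalization), and even then you obtain $\omega^2\|\bw\|_{\rho,\Omega}^2\lesssim\ks^2\|\eps(\bw)\|_{\mu,\Omega}^2$, not the $(1+\ks)$ factor you announce. The paper avoids all this by using a \emph{pure} Korn inequality $\|\grad\bw\|_{\mu,\Omega}^2\lesssim\|\eps(\bw)\|_{\mu,\Omega}^2$ (no lower-order term, cf.~\eqref{eq_korn_dirichlet}), so no absorption is needed and no spurious powers of $\ks$ are generated. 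Your choice of \eqref{eq_korn_general} — which is designed for the case of no Dirichlet boundary and carries both the $\omega^2\|\cdot\|_{\rho,\Omega}^2$ term and a $(1+\ks^{-1})^2$ prefactor — is the wrong tool here.
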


\begin{proof}
First, using the  multiplicative trace inequality in $\hCO$ derived,
e.g., in \cite[Theorem 1.5.1.10]{grisvard_1985a} and using a scaling argument, we get
\begin{equation*}
\RRR \|\grad \bw\|_{\GDiss}^2
\leq C
\left (
\|\grad \bw\|_{\widetilde \Omega}^2
+
\RRR \|\grad \bw\|_{\widetilde \Omega}|\bw|_{H^2(\widetilde \Omega)}
\right )
\leq C
\left (
(1+\ks) \|\grad \bw\|_\Omega^2 + \frac{\RRR^2}{\ks} |\bw|_{H^2(\widetilde \Omega)}^2
\right ).
\end{equation*}
By Korn inequality, we then have
\begin{equation*}
\RRR \|\grad \bw\|_{\mu,\GDiss}^2
\leq C
\left (
(1+\ks) \|\eps(\bw)\|_{\mu,\Omega}^2 + \frac{\RRR^2}{\ks} \mu_{\min} |\bw|_{H^2(\widetilde \Omega)}^2
\right )
\end{equation*}
and \eqref{eq_assumption_verified} follows from \eqref{eq_regularity_estimate}.
\end{proof}

\subsection{Elliptic regularity}

We finally state two standard elliptic regularity results for the
elastostatic system that are easily found in the litterature.

\begin{proposition}[Locally smooth cofficients]
\label{lemma_elliptic_regularity}
Assume that $\AAA \in \tens{C}^{1,1}(\overline{\GDiss})$ and that there
exists a neighborhood $U \subset \Omega$ of $\GDiss$ such that
$\mu,\lambda$ are of class $C^{0,1}$. Then, \eqref{eq_elastostatic_appendix}
holds true with $U = \widetilde \Omega$ and a constant $\Cell$
depending on $\Omega$ and $\widetilde \Omega$, the coefficients $\lambda,\mu,\rho$,
$\AAA$ and the ratio $\lambda_{\max}/\mu_{\min}$.
\end{proposition}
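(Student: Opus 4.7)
The plan is to rewrite the variational identity \eqref{eq_elastostatic_appendix} as a strong Neumann elastostatic problem near $\GDiss$, then localize by a smooth cutoff and invoke standard $\BH^2$-regularity for second-order elliptic systems with Lipschitz coefficients and smooth boundary. A final bookkeeping step tracks the dependence on $\omega$, $\ell$ and $\mu_{\min}$ and yields the announced factor $(1+\ks)/\ell$.

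First, I would test \eqref{eq_elastostatic_appendix} against arbitrary $\bv \in \BH^1_{\GDir}(\Omega)$ supported in $U \cup \GDiss$; by integration by parts this gives $-\div \sig(\bw) = \omega^2 \rho \bg$ in the distributional sense on $U$, and $\sig(\bw)\bn = \omega \AAA \bk$ on $\GDiss$ in the trace sense, the Neumann datum lying in $\BH^{1/2}(\GDiss)$ since $\AAA \in \tens{C}^{1,1}(\overline{\GDiss})$ and $\bk \in \BH^1(\Omega)$. Next, I would introduce a cutoff $\chi \in C^\infty_c(\overline{\Omega},[0,1])$ equal to one on a neighborhood $\widetilde \Omega$ of $\GDiss$, supported in $U \cup \GDiss$, and vanishing in a neighborhood of $\GDir$. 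The localized field $\bz \eq \chi \bw$ then solves
\[
-\div \sig(\bz) = \omega^2 \rho \chi \bg + \br_\chi(\bw) \text{ in } \Omega, \quad \sig(\bz)\bn = \omega \chi \AAA \bk \text{ on } \GDiss, \quad \bz = \bzero \text{ on } \partial \Omega \setminus \GDiss,
\]
where $\br_\chi(\bw)$ gathers the commutators, linear in $\bw$ and $\grad \bw$ with coefficients controlled by $\|\chi\|_{C^2}(\lambda_{\max}+\mu_{\max})$.

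Standard $\BH^2$-regularity for the elastostatic Neumann problem with $C^{0,1}$ bulk coefficients and $C^{2,1}$ boundary (Agmon--Douglis--Nirenberg, or the elasticity version in Grisvard) then provides a constant, depending on the shape of $\widetilde \Omega$ and on $\lambda_{\max}/\mu_{\min}$, such that
\[
\mu_{\min}^{1/2} |\bz|_{\BH^2(\Omega)} \leq C \Big( \|\div \sig(\bz)\|_{\BL^2(\Omega)} + \|\sig(\bz)\bn\|_{\BH^{1/2}(\GDiss)} + \mu_{\min}^{1/2} \ell^{-1} \|\bz\|_{\BH^1(\Omega)} \Big).
\]
I would then estimate each right-hand side term. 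The volume contribution gives $\omega^2 \rho_{\max}^{1/2} \|\bg\|_{\rho,\Omega}$, which using $\omega = \vels \ks/\ell$ rewrites as $\mu_{\min}^{1/2} (\ks/\ell) \omega \|\bg\|_{\rho,\Omega}$ up to coefficient-dependent constants. The commutator $\br_\chi(\bw)$ and the residual $\BH^1$-term are controlled by Korn's inequality on $\supp(\grad \chi) \cap \Omega$ and produce a contribution proportional to $\ell^{-1}\|\eps(\bw)\|_{\mu,\Omega}$, which is absorbed via the energy identity obtained from testing \eqref{eq_elastostatic_appendix} with $\bv = \bw$ together with Young's inequality. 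For the Neumann datum, the scaled multiplicative trace inequality
\[
\|\bk\|_{\BH^{1/2}(\GDiss)} \leq C \bigl( \ell^{-1/2} \|\bk\|_{\BL^2(\Omega)} + \ell^{1/2} \|\grad \bk\|_{\BL^2(\Omega)} \bigr)
\]
gives $\omega \|\chi \AAA \bk\|_{\BH^{1/2}(\GDiss)} \lesssim \mu_{\min}^{1/2} \ell^{-1} \ks (\omega \|\bk\|_{\rho,\Omega} + \|\eps(\bk)\|_{\mu,\Omega})$ after redistributing the factors of $\vels$ and $\rho_{\max}^{1/2}$, and this is precisely where the $(1+\ks)$ prefactor emerges.

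The main obstacle is the careful tracking of $\ell$-dependence throughout, so as to land on exactly the factor $(1+\ks)/\ell$ rather than a worse power like $(1+\ks^2)/\ell$. A clean way to perform this bookkeeping is to pass to the rescaled reference domain $\hOmega = \ell^{-1}\Omega$, where the regularity constant depends only on its shape and on $\lambda_{\max}/\mu_{\min}$, and then to reinstate dimensional factors via $\ks = \omega\ell/\vels$; this also makes transparent that the resulting $\Cell$ is independent of $\omega$ and $\ell$ and depends on the coefficients only through the ratios recorded in the statement.
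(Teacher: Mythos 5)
Your proposal is correct and follows the same route the paper takes implicitly: the paper simply cites Agmon--Douglis--Nirenberg and a standard elasticity regularity theorem, while you spell out the localization by cutoff, the reduction to a Neumann elastostatic problem near $\GDiss$, the $\BH^2$ elliptic estimate, and the scaling bookkeeping that together produce the $(1+\ks)/\ell$ prefactor with $\Cell$ depending only on shape, coefficient ratios, and $\lambda_{\max}/\mu_{\min}$. The only caveat is cosmetic: your displayed regularity inequality is missing a $\mu_{\min}^{-1/2}$ on the first two right-hand terms (you acknowledge coefficient-dependent constants, so this does not affect the argument), and note that the proposition's reference to \eqref{eq_elastostatic_appendix} is a typo in the paper for \eqref{eq_regularity_estimate}, which is what you actually establish.
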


\begin{proof}
The result follows from standard elliptic regularity for the elastostatic system,
see e.g. \cite{agmon:59} or \cite[Theorem 2.3.2]{GLCI}.
\end{proof}

\begin{proposition}[Constant coefficients]
\label{lemma_elliptic_regularity}
Assume that that $\GDiss$ and $\GDir$ are of class $C^{1,1}$.
We also assume that $\AAA \in \tens{C}^{1,1}(\overline{\GDiss})$,
and that $\mu,\lambda$ are constant. Then, \eqref{eq_elastostatic_appendix}
holds true with $\Omega = \widetilde \Omega$ and a constant $\Cell$
independent of $\lambda/\mu$.
\end{proposition}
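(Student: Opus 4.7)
The plan is to recast the elastostatic problem as a generalized Stokes system with small parameter $\lambda^{-1}$, so that $\lambda$-independent regularity theory for Stokes can be used to eliminate the $\lambda/\mu$ dependence present in the previous Proposition~\ref{lemma_elliptic_regularity}.

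First, I would introduce the auxiliary pressure $p \eq -\lambda\div\bw \in L^2(\Omega)$, so that the weak formulation \eqref{eq_elastostatic_appendix} is equivalent to the mixed problem of finding $(\bw,p) \in \BH^1_{\GDir}(\Omega)\times L^2(\Omega)$ with
\begin{equation*}
\left\{
\begin{aligned}
2\mu(\eps(\bw),\eps(\bv))_\Omega - (p,\div\bv)_\Omega
&=
\omega^2(\rho\bg,\bv)_\Omega + \omega(\AAA\bk,\bv)_{\GDiss},
\\
-(\div\bw,q)_\Omega - \lambda^{-1}(p,q)_\Omega
&=
0,
\end{aligned}
\right.
\end{equation*}
for all $(\bv,q)\in\BH^1_{\GDir}(\Omega)\times L^2(\Omega)$. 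This is a continuous family of perturbed Stokes problems parametrized by $\lambda^{-1}\in[0,\lambda_{\min}^{-1}]$, the limiting ($\lambda^{-1}=0$) case being the usual Stokes system with mixed Dirichlet/Robin boundary conditions $\bw=\bo$ on $\GDir$ and $2\mu\eps(\bw)\bn-p\bn=\omega\AAA\bk$ on $\GDiss$.

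The proof would then proceed in two steps. Step one is a uniform-in-$\lambda$ energy bound: using the Brezzi inf-sup condition for the divergence operator $\BH^1_{\GDir}(\Omega)\to L^2(\Omega)$ (valid with a $\lambda$-independent constant since $\GDir\neq\emptyset$ and $\Omega$ is $C^{1,1}$), together with Korn's inequality, I would obtain
\begin{equation*}
\mu^{1/2}\|\eps(\bw)\|_\Omega + \mu^{-1/2}\|p\|_\Omega
\leq
C\,\bigl(\omega\|\bg\|_{\rho,\Omega}+\omega\|\bk\|_{\rho,\Omega}+\|\eps(\bk)\|_{\mu,\Omega}\bigr)
\end{equation*}
with $C$ independent of $\lambda$ and $\omega$. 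Step two is to lift this to $H^2\times H^1$ regularity by appealing to Cattabriga/Amrouche--Girault-type regularity for the Stokes operator with mixed $C^{1,1}$ boundary data. Localizing near $\GDir$ and $\GDiss$, flattening each portion of the boundary, and gluing local boundary estimates with interior Stokes regularity then produces a control of $\mu^{1/2}|\bw|_{\BH^2(\Omega)}+\mu^{-1/2}|p|_{H^1(\Omega)}$ by the same right-hand side, again uniformly in $\lambda$. The $(1+\ks)/\ell$ prefactor of Assumption~\ref{assumption_natural} with $\widetilde\Omega=\Omega$ is then reinstated by a standard dimensional scaling.

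The main obstacle is justifying the uniform-in-$\lambda$ $H^2$ Stokes regularity with the specific mixed Dirichlet/Robin data at hand: classical references treat pure Dirichlet or pure traction conditions, whereas here the Robin trace on $\GDiss$ couples $\eps(\bw)$ and $p$, and tracking constants through the localization-and-lifting argument to ensure no factor of $\lambda/\mu$ enters is the delicate point. The remaining ingredients---Brezzi inf-sup, Korn's inequality, and dimensional scaling---are standard and $\lambda$-robust.
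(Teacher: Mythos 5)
The paper's proof of this proposition consists of a single citation: \cite[Lemma~A.1]{vogelius_1983a}. That lemma establishes $\lambda$-uniform $H^2$ regularity for the elastostatic system by exactly the pressure/Stokes reformulation you outline (introduce $p=-\lambda\div\bw$, recast as a penalized Stokes system, use the inf-sup condition for $\div$ and classical Stokes regularity), so your proposal is essentially an unpacking of the argument that the paper delegates to that reference rather than a genuinely different route.

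Two clarifications. The natural boundary condition on $\GDiss$ produced by \eqref{eq_elastostatic_appendix} is the \emph{pure traction} condition $\sig(\bw)\bn=\omega\AAA\bk$ with \emph{given} data $\bk$, not a Robin condition (the trace of $\bw$ itself does not appear); the ``coupling'' of $\eps(\bw)$ and $p$ that you worry about is just the standard traction condition $2\mu\eps(\bw)\bn-p\bn=\omega\AAA\bk$ for the penalized Stokes operator, and regularity with such inhomogeneous traction data on a $C^{1,1}$ boundary is classical. Moreover, the obstacle you flag --- the transition region between Dirichlet and traction conditions --- does not actually arise in this paper's setting: $\GDir$ and $\GDiss$ are assumed to be non-overlapping \emph{relatively closed} subsets of $\partial\Omega$, so they lie on distinct connected components of the boundary (as in the prototypical case $\Omega=B_\RRR\setminus D$ with $\GDir=\partial D$ and $\GDiss=\partial B_\RRR$). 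The localization-and-flattening step therefore treats each boundary component as a pure Dirichlet or a pure traction Stokes problem, and no Dirichlet/traction interface ever occurs. The one item that genuinely needs a careful argument or reference --- and which the paper's one-line citation leaves implicit --- is tracking the constants through the traction-Stokes regularity near $\GDiss$ to confirm they do not degenerate as $\lambda^{-1}\to 0$; your Step two is the right place to insert that verification.
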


\begin{proof}
See \cite[Lemma A.1]{vogelius_1983a}.
\end{proof}

\end{document}